\newenvironment{assumptionp}[1]{
	
	\assumptionalt
}{\endassumptionalt}
\newtheorem{lemma}{Lemma}[section]
\newtheorem{theorem}{Theorem}[section]
\newtheorem{corollary}{Corollary}[section]
\newtheorem{definition}{Definition}[section]
\newtheorem{proposition}{Proposition}[section]
\newtheorem{remark}{Remark}[section]
\def\la{\left\langle}
\def\ra{\right\rangle}
\def\R{\mathbb R}
\def\bP{\mathbb P}
\def\bR{\mathbb R}
\def\bN{\mathbb N}
\def\bE{\mathbb E}
\def\E{\mathbb E}
\def\cH{\mathcal H}
\def\cF{\mathcal F}
\def\eps{\varepsilon}
 \numberwithin{equation}{section}
\begin{document}
\title[SPDEs associated with Feller processes]{Stochastic partial differential equations associated with Feller processes}
\author[J. Song]{Jian Song}
\address{Research Center for     Mathematics and Interdisciplinary Sciences, Shandong University, China}
\email{txjsong@sdu.edu.cn}

\author[M. Wang]{Meng Wang}
\address{Center for Applied Mathematics and KL-AAGDM, Tianjin University, Tianjin, China}
\email{wmeng\_2206@tju.edu.cn}

\author[W. Yuan]{Wangjun Yuan}
\address{Department of Mathematics, Southern University of Science and Technology, Shenzhen, China.}
\email{ywangjun@connect.hku.hk}

\begin{abstract}
For the stochastic partial differential equation  $\frac{\partial }{\partial t}u=\mathcal L u +u\dot W$ where $\dot W$ is Gaussian noise colored in time and  $\mathcal L$ is the infinitesimal generator of a Feller process $X$, we obtain Feynman--Kac type of representations for the Stratonovich and Skorohod solutions as well as for their moments. The H\"older continuity of the solutions 
 and the regularity of the law are also studied. 
 
 \medskip\noindent\textbf{Keywords.} Stochastic  partial differential equation; Feller process; Feynman--Kac formula;    Malliavin calculus\smallskip

{\noindent\textbf{AMS 2020 Subject Classifications.} 60H15, 60H07, 35R60, 60G53}
\end{abstract}

\maketitle
\tableofcontents

\section{Introduction}

Consider the following SPDE in $\mathbb R^d$,
\begin{equation}\label{spde}
	\begin{cases}
		\displaystyle\frac{\partial }{\partial t}u(t,x)=\mathcal {L} u(t,x)+u(t,x)\dot {W}(t,x),& t> 0, x\in \mathbb R^d\\
		u(0,x)=u_0(x),& x\in\mathbb R^d,	\end{cases}
\end{equation}
where  $u_0(x)$ is a  bounded measurable function,   $\dot W(t,x)$  is Gaussian noise with covariance given by
\begin{equation}\label{kernel-general}
    \E[\dot{W}(t,x)\dot{W}(s,y)]= |t-s|^{-\beta_0}\gamma(x-y),
\end{equation}
with $\beta_0\in[0,1)$ and $\gamma(x)$  being a non-negative and non-negative  definite (generalized) function, and $\mathcal{L}$ is the infinitesimal generator of a  { Feller process} $X=\{X_t,t\geq0\}$ which is independent of the noise $\dot W$.  Let $\mu(d\xi)=\hat\gamma(\xi)d\xi$ be the spectral measure of the noise,  i.e., $\int_{\R^d} \varphi(x) \gamma(x) dx = \int_{\R^d} \widehat \varphi(\xi) \mu(d\xi)$ for any Schwartz function $\varphi(x)$. Recall that $\mu$ is a positive and tempered measure, i.e., $\int_{\R^d} (1+|\xi|^2)^{-p} \mu(d\xi) <\infty \text{ for some } p>0.$  Here $\hat f$ (also denoted by $\cF f$) is the Fourier transform of $f$ in the space variable. In particular, for $f\in L^1(\R^d)$, its Fourier transform can be defined by the integral
$\hat f(\xi) =\int_{\R^d} e^{-\iota x\cdot \xi} f(x) dx,$ where $\iota$ is the imaginary unit.

A typical example of $\dot W$  is given by  the partial derivative of fractional Brownian sheet $W^H$ with Hurst parameter $H_0$ in time and $(H_1,\cdots,H_d)$ in space satisfying $H=(H_0, H_1, \dots, H_d)\in(\frac 12,1)^{1+d}$: the Gaussian noise $\dot{W}^H(t,x)=\frac{\partial ^{d+1}W^H}{\partial t\partial x_1\cdots\partial x_d}(t,x)$ is a distribution-valued Gaussian random variable with the covariance function (up to a multiplicative constant)
\begin{equation}\label{kernel}
\E[\dot{W}^H(t,x)\dot{W}^H(s,y)]=|t-s|^{2H_0-2}\prod\limits_{i=1}^d|x_i-y_i|^{2H_i-2}.
\end{equation}
  In this case, $\gamma(x) = \prod\limits_{i=1}^d|x_i|^{2H_i-2}$ with $\hat \gamma(\xi) =\prod_{i=1}^d |\xi_i|^{1-2H_i}$. Another typical example of $\gamma(x)$ is the Dirac delta function $\boldsymbol{\delta}(x)$ whose Fourier transform is the constant 1.  Some other interesting examples of  the spatial covariance function are, for instance, the Riesz kernel $\gamma(x)=|x|^{-\alpha}, \alpha\in(0,d)$ with $\hat \gamma(\xi) = |\xi|^{\alpha-d}$, the Cauchy kernel $\gamma(x)=\prod_{i=1}^d(1+x^2)^{-1}$ with $\hat \gamma(\xi) = \exp\left(\sum_{i=1}^d |\xi_i|\right)$, the Poisson kernel $\gamma(x)=(1+|x|^2)^{-(d+1)/2}$ with $\hat \gamma(\xi) = \exp\left(-|\xi|\right)$, and the Ornstein-Uhlenbeck kernel $\gamma(x)= \exp\left(-|x|^\alpha\right), \alpha\in(0,2]$.

Denote by \[p_{t}^{(x)}(y)=\mathbb P(X_t=y|X_0=x)=\mathbb P(X_{t+s}=y|X_s=x)\]  the transition probability  density of { the Feller process $X$}.    We assume the following condition throughout the paper:
\begin{assumptionp}{(H)}\label{H}
     We assume $p_t^{(x)}(y)\le c_0 P_t(y-x)$ for some non-negative function $P_t(x)$ satisfying     \begin{equation}\label{e:P-bound}
     0\le \hat P_t(\xi) \le  c_1 \exp\left(-c_2 t \Psi(\xi)\right),
     \end{equation}
     where $c_0, c_1$ and $c_2$ are positive constants and $\Psi(\xi)=\Psi(|\xi|)$ is a non-negative measurable function satisfying $\lim_{|\xi|\to \infty} \Psi(\xi) =\infty$.
\end{assumptionp}
Clearly, symmetric L\'evy processes $X$ with $\Psi(\xi)$ being the characteristic exponent of $X$ satisfy Assumption \ref{H}, noting that  $\lim_{|\xi|\to\infty} \Psi(\xi)=\infty$ by Riemann–Lebesgue lemma. Moreover, the diffusion process $X^x$ governed by the stochastic differential equation
\begin{align}\label{x}
	X^x_t=x+\int_0^t b(X^x_s)ds+\int_0^t\sigma(X^x_s)dB_s,\,\,\,t>0,
\end{align}
where $\{B_t,t\geq 0\}$ is a $d$-dimensional Brownian motion, also satisfies Assumption \ref{H}, if we assume the following uniform ellipticity condition:  there exists a constant $c>0$ such that for all $x,y\in\R^d$,
 \begin{align}\label{e:con-p}
 	y^*\sigma(x)\sigma^*(x)y\geq c|y|^2.
 \end{align}
Indeed, under the condition \eqref{e:con-p}, we have (see e.g. \cite{Friedman}) for all $y \in \R^d$, \begin{align} \label{eq-transition density}
	p_t^{(x)} (y) \le c_1 t^{-d/2} \exp \left( - \dfrac{|y-x|^2}{c_2t} \right),
\end{align}
and hence Assumption \ref{H} is satisfied. In this case, the differential operator $\mathcal L$ takes the  form $
	\mathcal L =\frac 12\sum\limits_{i,j=1}^d(\sigma\sigma^{\text{T}})_{i,j}\frac{\partial^2}{\partial x_i\partial x_j}+\sum\limits_{i=1}^db_i\frac{\partial}{\partial x_i}.$

The equation \eqref{spde} can be understood in the Stratonovich sense and in the Skorohod sense, depending on the definition of the product $u\dot W$. We say that it is a Stratonovich equation if the product $u\dot W$ is an ordinary product and a Skorohod equation if it is a Wick product. Note that if the noise is white in time, Skorohod solution coincides with the classical It\^o solution.  

To obtain the existence of the solutions, we propose the following Dalang's conditions (see \cite{Dalang99}): 
\begin{equation}\label{dalang-stra}
    \int_{\R^d} \left(\frac{1}{1+\Psi(\xi)}\right)^{1-\beta_0}\mu(d\xi)<\infty
\end{equation}
and 
\begin{equation}\label{dalang-SKo}
    \int_{\R^d} \frac{1}{1+\Psi(\xi)}\mu(d\xi)<\infty 
\end{equation}
for Stratonovich equation and Skorohod equation, respectively. Clearly, \eqref{dalang-stra} implies \eqref{dalang-SKo}, noting that $\beta_0\in[0,1)$ and $\Psi(\xi)\to \infty$ as $|\xi|\to \infty$.

  Denote by $X^x$ the Feller process $X$  starting at $x$ at time $t=0$. We shall prove that under the stronger condition \eqref{dalang-stra}, the Stratonovich solution $u^{\text{st}}$ and the Skorohod solution $u^{\text{sk}}$ can be represented by the following Feynman--Kac formulas respectively,
\begin{equation}\label{e:FK1}
    u^{\text{st}}(t,x) =\E_{X} \left[u_0(X_t^x)\exp\left( \int_0^t\int_{\R^d} \boldsymbol{\delta}( X^x_{t-s}-y)W(ds,dy)\right)\right]
    \end{equation}
    and 
    \begin{equation}\label{e:FK2}
    \begin{aligned}
    u^{\text{sk}}(t,x)=&\E_{X} \Bigg[u_0(X_t^x)\exp\bigg( \int_0^t\int_{\R^d} \boldsymbol{\delta}( X^x_{t-s}-y)W(ds,dy)\\
    &\qquad\qquad \qquad   -\frac12 \int_0^t\int_0^t |r-s|^{-\beta_0}\gamma(X_r^x-X_s^x)drds\bigg)\Bigg],    
    \end{aligned}
\end{equation}
where $\boldsymbol{\delta}(\cdot)$ is the Dirac delta function, $\E_{X}$ is the expectation in the probability space generated by $X$ (similarly, we will use $\E_W$ to denote the expectation with respect to the noise $\dot W$).

  Note that there exists a unique Skorohod solution under the weaker condition \eqref{dalang-SKo} { (see Theorem~\ref{Thm-solution})} while the Feynman--Kac formula \eqref{e:FK2} may not hold as the It\^o-Stratonovich correction term $\frac12 \int_0^t\int_0^t |r-s|^{-\beta_0}\gamma(X_r^x-X_s^x)drds$ is infinite if the condition \eqref{dalang-stra} is violated. As a contrast, the moments of the Stratonovich solution and the Skorohod solution can be represented by Feynman--Kac type formulas under \eqref{dalang-stra} and under \eqref{dalang-SKo},   respectively.  With the help of Feynman--Kac formula, we also study the properties of the solutions such as the H\"older continuity and regularity of the probability law.

To conclude the introduction, we briefly review several related works and comment on the connections between our results and the existing literature.

  Walsh \cite{Walsh} initiated the theory of stochastic integrals with respect to martingale measures and used it to study  stochastic partial differential equations (SPDEs) driven by space–time white noise; Dalang \cite{Dalang99} extended Walsh’s stochastic integral and applied it to solve SPDEs with Gaussian noise white in time and colored in space. In recent decades, theories of SPDEs driven by Gaussian noise which is white in time have been extensively developed, and we refer to   \cite{DZ92, DKMNX, K14, PR07} and the references therein for more literature.  We remark that the Gaussian noise $\dot W$ in \eqref{spde} is colored in time and hence the approach based on the martingale structure of the noise cannot be applied directly to our problem.

For the heat equation on $\mathbb R$ driven by space-time white noise, it has a unique It\^o (Skorohod) solution which cannot be presented by a formula in the form of \eqref{e:FK2} since the It\^o-Stratonovich correction term is infinite (see \cite{Walsh}). For heat equations driven by Gaussian noise induced by fractional Brownian sheet, the existence of Feynman--Kac formulae was conjectured in  \cite{mv} and was later on established in \cite{HNS11}. The result in \cite{HNS11} was extended to general Gaussian noise in \cite{HHNT2015} and to the equation $\frac{\partial u}{\partial t}=Lu + u\dot W$ in \cite{Song17} where $L$ is the infinitesimal generator of a symmetric L\'evy process.

{ It is natural to conjecture that the results of \cite{HNS11} (resp.~\cite{Song17}) can be extended to SPDE~\eqref{spde}, with the Brownian motion (resp.~symmetric L\'evy process) replaced by the Feller process associated with the differential operator $\mathcal L$. The main difference, as well as the primary difficulty, lies in establishing the exponential integrability of the Hamiltonian
$\int_0^t\int_0^t |r-s|^{-\beta_0}\gamma(X_r^x-X_s^x)drds$
which is required to ensure the well-definedness of the Feynman--Kac formulas \eqref{e:FK1} and \eqref{e:FK2}. In \cite{HNS11}, where $X$ is a Brownian motion, the exponential integrability was established using techniques developed by Le Gall in \cite{LeGall94}, which rely on specific properties of Brownian motion. In \cite{Song17}, the symmetry and independence properties of $X$ also played a crucial role. Our approach (see Section~\ref{sec:Hamiltonian}) is inspired by the recent work \cite{RSW24}, where only the Markov property of $X$, together with certain Fourier analytic techniques, is required. }

% Another key ingredient of validating Feynman–Kac formula \eqref{e:FK1} is to justify that it does solve \eqref{spde} in some sense, and we shall follow the approach in \cite{HNS11, HHNT2015, Song17} in which Malliavin calculus was used and prove that \eqref{e:FK1} is a mild Stratonovich solution to \eqref{spde} (see Section \ref{sec:FK-str}). 

A direct consequence  of Feynman--Kac formulae is the non-negative property of the solutions. Under the stronger Dalang's condition \eqref{dalang-stra}, both the Stratonovich solution and the Skorohod solution are strictly positive a.s. (see Propositions \ref{prop:pos} and Proposition \ref{prop:pos1}), while under the weaker Dalang's condition \eqref{dalang-SKo}, the Skorohod solution is non-negative a.s. (see Proposition \ref{prop:pos2}). We refer to \cite{mueller91,flores14,gp17,ch19} and the references therein for related results on the positivity of solutions to stochastic heat equations.

In view of the Feynman--Kac formula, it is convenient to study the H\"older continuity of the Stratonovich solution (see Section~\ref{sec:holder1}). By contrast, the H\"older continuity of the Skorohod solution is analysed through its Wiener chaos expansion (see Section~\ref{sec:holder2}), since in general it does not admit the Feynman--Kac representation \eqref{e:FK2} under condition \eqref{dalang-SKo}. We remark that the H\"older continuity of solutions to (fractional) heat equations has been extensively studied in, for instance, \cite{HNS11, HHNT2015, Song17, BQS2019, HL19}; see also the survey \cite{Hu19} and the references therein.

{ The regularity of the probability distribution of solutions to SPDEs has been extensively studied.} For the one-dimensional heat equation driven by space-time white noise, the absolute continuity of the law of the solution was established in \cite{PZ93} using Malliavin calculus, while the smoothness of the density was proved in \cite{BP98,MN08} under suitable conditions. For SPDEs of the form $Lu=b(u)+\sigma(u)\dot W$, the absolute continuity of the law was obtained in \cite{Tindel00}, where $L$ is a pseudodifferential operator and $\dot W$ is space-time white noise. The smoothness of the density was later established in \cite{NQ07} when $L$ is a parabolic or hyperbolic operator and $\dot W$ is a Gaussian noise that is white in time and homogeneous in space. The smoothness of the density for stochastic heat equations was further investigated in \cite{HNS11,HNS13} by combining Malliavin calculus with Feynman--Kac formulas.

{ In our setting, when the stronger Dalang's condition~\eqref{dalang-stra} holds, the Feynman--Kac formulas can be applied to study the regularity of the law for both Stratonovich and Skorohod solutions. Under the weaker condition~\eqref{dalang-SKo}, however, the analysis becomes more delicate due to the lack of a Feynman--Kac formula for the Skorohod solution. We refer to Sections~\ref{sec:law} and~\ref{sec:law2} for further details on the Stratonovich solution and the Skorohod solution, respectively.}

Intermittency, which in the SPDE literature usually refers to the phenomenon that the solution develops exceptionally high peaks, has been extensively studied, particularly for the so-called parabolic Anderson model (see, e.g., \cite{CM94, FK09, K14, CD15}) and also for the hyperbolic Anderson model (see \cite{CJKS13, BS19, BCC22, CGS23}). The intermittency property is closely related to the long-time asymptotic behaviour of the solution and can be characterized by the $p$th moment Lyapunov exponents, which can often be computed explicitly for (fractional) stochastic heat equations via Feynman--Kac formulas (see \cite{CHSX2015,HLN2017,CHSS2018}). Investigating the precise long-time asymptotics for \eqref{spde} will be the subject of future work.

This paper is organized as follows. In Section \ref{sec:pre}, some preliminaries on the Wiener space associated with the noise $\dot W$, in particular some fundamental ingredients in Malliavin calculus, are collected. In Section \ref{sec:stra} and Section \ref{sec:sko}, the Stratonovich solution and the Skorohod solution are studied, respectively. In Appendix \ref{sec:FK}, the Feynman--Kac formula for PDE $\partial_t u = \mathcal L u + f(t,x)u$ where $\mathcal L$ is the infinitesimal generator of a Feller process is provided. 

Throughout the article, we use $C$ to denote a generic positive constant whose value may change from line to line.

\section{Preliminaries}\label{sec:pre}

In this section, we provide some preliminaries on the Wiener space associated with the Gaussian noise $\dot W$ of which the covariance is given  by \eqref{kernel-general}. In particular, some basic elements in Malliavin calculus is recalled. We refer to \cite{Nualart,Hu} for more details.

Let $\mathcal D(\R_+\times\R^d)$ be the space of smooth functions on $\R_+\times \R^d$ with compact support. Let $\mathcal H$ be the completion of $\mathcal D(\R_+\times\R^d)$ with respect to the inner product given below,
\begin{equation}\label{e:inner-prod}
\begin{aligned}
\la f,g \ra_{\mathcal H}:&=\int_{\R^2_+}\int_{\R^{2d}} f(t,x) g(s,y) |t-s|^{-\beta_0} \gamma(x-y) dxdy ds dt \\
&=\int_{\R_+^2}\int_{\R^d} \hat f(t,\xi) \overline{\hat g(s,\xi)} |t-s|^{-\beta_0}\mu(d\xi) dsdt.
\end{aligned}
\end{equation}

Let $W=\{W(f), f\in\mathcal H\}$ be a centered Gaussian family (also called an  isonormal Gaussian process) defined on a
complete probability space $(\Omega, \mathcal F, P)$, with covariance 
\[\E[W(f)W(g)]=\la f, g \ra_\mathcal H, \,\text{ for } f,g\in \mathcal H. \]
We call $W(f)$ a Wiener integral which is also denoted by $$W(f):=\int_{\R_+}\int_{\R^d} f(s,x) W(ds,dx).$$ 
Denoting $W(t,x):=W(\mathbf{1}_{[0,t]\times [0,x]})$, then $\{W(t,x), t\ge 0, x\in\R^d\}$ is a random field and  we have $\dot W(t,x)=\frac{\partial^{1+d}}{\partial t \partial x_1\cdots\partial x_d}W(t,x)$ in the sense of distribution. It is of interest to consider the following case as a toy model: when $\beta_0=2-2H_0$ and $\gamma(x-y)=\prod_{i=1}^d |x_i-y_i|^{2H_i-2}$ with $H_i\in(0,1)$ for $i=0,1,\dots, d$, the random field $\{W(t,x),t\ge 0, x\in\R^d\}$ is the so-called fractional Brownian sheet (up to some multiplicative constant) with Hurst parameter $H_0$ in time and $(H_1,\dots, H_d)$ in space. 

Denote the $m$th Hermite polynomial by $H_m(x):=(-1)^m e^{x^2/2}\frac{d ^m}{d x^m}e^{-x^2/2}$ for $m\in\mathbb N\cup\{0\}$. For  $g\in \mathcal  H$ with $\|g\|_{\mathcal H}=1$,  the $m$th multiple Wiener integral of $g^{\otimes m}\in \mathcal H^{\otimes m}$ is defined by  
$I_m(g^{\otimes m}):= H_m(W(g)).$ In particular, we have $W(g)= I_1(g)$. For $f\in \mathcal H^{\otimes m}$, denote the  symmetrization of $f$ by 
\[\tilde  f(t_1, x_1, \dots, t_m,x_m) := \frac1{m!}\sum_{\sigma\in S_m} f(t_{\sigma(1)}, x_{\sigma(1)}, \dots,t_{\sigma(m)}, x_{\sigma(m)}),\]
where $S_m$ is the set of all permutations of $\{1,2,\dots,m\}.$  Let ${\mathcal H}^{\tilde \otimes m}$ be the symmetrization  of ${\mathcal H}^{\otimes m}$. Then for $f\in {\mathcal H}^{\tilde \otimes m}$,  one can define the $m$th multiple Wiener integral $I_m(f)$ by a limiting argument.  Moreover, for $f \in {\mathcal H}^{\tilde \otimes m}$ and  $g\in {\mathcal  H}^{\tilde \otimes n}$, we have
\begin{equation}\label{e:ImIn}
	{\mathbb E}[I_m(f)I_n(g)]=m!\langle f,g\rangle_{\mathcal  H^{\otimes m}}\delta_{mn}, 
\end{equation}
where $\delta_{mn}$ is the Kronecker delta function.  For $f\in {\mathcal  H}^{\otimes m}$ which is not necessarily symmetric, we simply define $I_m(f):= I_m(\tilde f).$  For a square integrable random variable $F$ which is measurable with respect to the $\sigma$-algebra generated by $W$, it has the following unique expansion (Wiener chaos expansion): $$F=\E[F]+\sum_{n=1}^\infty I_n(f_n), ~ f_n\in \mathcal H^{\tilde  \otimes n}.$$

Now, let us collect some  knowledge on  Malliavin calculus that will be used in this paper.   Let $F$ be a smooth  cylindrical random variable, i.e., $F$ is of the form
$	F=f(W(\phi_1),\cdots,W(\phi_n)),$
where $\phi_i\in \mathcal H$ and $f$ is a smooth function of which all the derivatives are of polynomial growth. Then the Malliavin derivative $DF$ of $F$ is defined by
\begin{align*}
	DF=\sum\limits_{j=1}^n\frac{\partial f}{\partial x_j}(W(\phi_1),\cdots,W(\phi_n))\phi_j.
\end{align*}
Noting that the operator $D$ is closable from $L^2(\Omega)$ into $L^2(\Omega;\mathcal H)$, we can define the Sobolev space $\mathbb D^{1,2}$ as the closure of the space $\mathfrak S$ of all  smooth cylindrical random variables under the norm
\begin{align*}
	\|F\|_{1,2}=\left(\E[F^2]+\E[\|DF\|^2_{\mathcal H}]\right)^{1/2}.
\end{align*}
Similarly, one can define the $k$th Malliavin derivative $D^k F$ as an $\mathcal H^{\otimes k}$-valued variable for $k\ge 2$, and for any $p>1$ let $\mathbb D^{k,p}$ be the completion 
of $\mathfrak S$ under the norm \[\|F\|_{k,p}=\bigg(\E[|F|^p]+\sum_{j=1}^k\E[\|D^jF\|^p_{\mathcal H^{\otimes j}}]\bigg)^{1/p}.\]Then we define $\mathbb D^\infty = \cap_{k,p=1}^\infty\mathbb D^{k,p}$.

The divergence operator $\boldsymbol{\delta}$ (also called Skorohod integral) is defined by the duality 
\begin{align*}
	\E[\boldsymbol{\delta}(u)F]=\E[\la DF,u\ra_{\mathcal H}]
\end{align*}
for $F\in\mathbb D^{1,2}$ and $u\in \text{Dom}(\boldsymbol{\delta})$, where  $\text{Dom}(\boldsymbol{\delta})$ is the domain of the divergence operator which is the set of $u\in L^2(\Omega,\mathcal H)$ such that $|\E[\la DF, u]\ra_{\mathcal H}|\le c_u \|F\|_2$ for all $F\in\mathbb D^{1,2}$. The second moment of $\boldsymbol{\delta}(u)$ has the following upper bound:
\begin{equation}\label{e:formula1}
\E[|\boldsymbol{\delta}(u)|^2]\le \E [\|u\|^2_{\mathcal H}]+ \E[\|Du\|^2_{\mathcal H^{\otimes2}}].
\end{equation}
The following formula will be used in the proof 
\begin{align}\label{formula}
	FW(\phi)=\boldsymbol{\delta}(F\phi)+\la DF,\phi\ra_\mathcal H,
\end{align}
for $\phi\in \mathcal H$ and $F\in\mathbb D^{1,2}$.

%{\blue
%Throughout the paper, $\E_W$ and $\E_X$ denote the expectations with respect to the noise $W$ and the process $X$, respectively. When the underlying randomness is clear from the context, we simply write $\E$ for the corresponding expectation; in particular, $\E$ may stand for $\E_W$, $\E_X$, or the expectation on the product probability space.
%}

\section{Stratonovich solution}\label{sec:stra}
In this section, we derive the Feynman--Kac formula~\eqref{e:FK1} for the Stratonovich equation~\eqref{spde}.

\subsection{Exponential integrability of the Hamiltonian}\label{sec:Hamiltonian} In this subsection, we  define the Hamiltonian, via approximation,
\begin{equation}\label{e:Ham}
    \int_0^t\int_{\R^d} \boldsymbol{\delta}(X_{t-s}^x-y)W(ds,dy)
\end{equation}    
appearing in Feynman--Kac formulas \eqref{e:FK1}, and  prove its exponential integrability. 

% We shall follow the approach used in  \cite{HNS11,Song17}.

We  use $\varphi_\delta(t):=\frac{1}{\delta}\mathbf{1}_{[0,\delta]}(t), t\ge0$ and $q_\eps(x):=  (2\pi\varepsilon)^{-\frac{d}{2}}e^{\frac{-|x|^2}{2\varepsilon}}, x\in\R^d$ to approximate the Dirac delta functions in  time and in space, respectively. Then, 
\begin{align}\label{A}
	A_{t,x}^{\varepsilon,\delta}(r,y):=\int_0^t\varphi_\delta(t-s-r)q_\eps(X_s^x-y)ds,
\end{align}
is an approximation of  $\delta(X_{t-r}^x-y)$ when $\eps$ and $\delta$ are small. In the following result, we first prove that $A_{t,x}^{\varepsilon,\delta}\in\mathcal H$  almost surely  for all $\varepsilon,\delta>0$, indicating that \begin{align}\label{V}
V_{t,x}^{\varepsilon,\delta}:=\int_0^t\int_{\R^d}A_{t,x}^{\varepsilon,\delta}(r,y)W(dr,dy)=W(A_{t,x}^{\varepsilon,\delta})
\end{align}
is a well-defined Wiener integral (conditional on $X$), and then we establish the $L^2(\Omega)$-convergence of $V_{t,x}^{\varepsilon, \delta}$ as $(\varepsilon, \delta)\to0$, which defines the Hamiltonian given by \eqref{e:Ham}.
\begin{theorem}\label{convergence}
	Assume Assumption \ref{H} and condition \eqref{dalang-stra}. Then,  $A_{t,x}^{\varepsilon,\delta}$ belongs to $\mathcal H$ almost surely for all $\varepsilon, \delta>0$.  Furthermore, 
$V_{t,x}^{\varepsilon,\delta}$ converges in { $L^2$} as $(\eps,\delta)\to0$, with the limit $V_{t,x}$  denoted by
	\begin{align}\label{V_{t,x}}
		V_{t,x}=:\int_0^t\int_{\R^d}\boldsymbol{\delta}(X_{t-r}^x-y)W(dr,dy)=W(\boldsymbol{\delta}(X_{t-\cdot}^x-\cdot)).
	\end{align}
Conditional on $X$, $V_{t,x}$ is  a Gaussian random variable with mean $0$ and variance
\begin{align}\label{cov}
\text{Var}[V_{t,x}|X]=\int_0^t\int_0^t|r-s|^{-\beta_0}\gamma(X_r^x-X_s^x) drds.
\end{align}
\end{theorem}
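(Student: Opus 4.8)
My plan is to work conditionally on the process $X$ throughout, treating $V_{t,x}^{\e,\delta}$ as a Wiener integral in the Gaussian space of $\dot W$ only. The first task is to verify $A_{t,x}^{\e,\delta}\in\mathcal H$ almost surely. Since $A_{t,x}^{\e,\delta}(r,y)=\int_0^t\varphi_\delta(t-s-r)q_\e(X_s^x-y)\,ds$ has compact support in time and is smooth (indeed Schwartz) in $y$, I would compute $\|A_{t,x}^{\e,\delta}\|_{\mathcal H}^2$ using the spectral form of the inner product in \eqref{e:inner-prod}: the spatial Fourier transform is $\widehat{A_{t,x}^{\e,\delta}}(r,\xi)=\int_0^t\varphi_\delta(t-s-r)e^{-\iota\xi\cdot X_s^x}\widehat{q_\e}(\xi)\,ds$, so that
\[
\|A_{t,x}^{\e,\delta}\|_{\mathcal H}^2=\int_{\R_+^2}\int_{\R^d}\widehat{A_{t,x}^{\e,\delta}}(r,\xi)\overline{\widehat{A_{t,x}^{\e,\delta}}(r',\xi)}\,|r-r'|^{-\beta_0}\mu(d\xi)\,dr\,dr'.
\]
Because $|\widehat{q_\e}(\xi)|=e^{-\e|\xi|^2/2}\le 1$ and $|e^{-\iota\xi\cdot X_s^x}|=1$, the integrand is bounded by $\|\varphi_\delta\|_{L^1}^2\,|r-r'|^{-\beta_0}$ times the Gaussian factor $e^{-\e|\xi|^2}$, and since $\mu$ is tempered, $\int e^{-\e|\xi|^2}\mu(d\xi)<\infty$; the time integral $\int_0^t\int_0^t|r-r'|^{-\beta_0}\,dr\,dr'$ is finite as $\beta_0\in[0,1)$. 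This gives $\|A_{t,x}^{\e,\delta}\|_{\mathcal H}<\infty$ a.s., so $V_{t,x}^{\e,\delta}=W(A_{t,x}^{\e,\delta})$ is a well-defined conditionally Gaussian Wiener integral.

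Next, for the $L^2(\Omega)$-convergence I would show $(V_{t,x}^{\e,\delta})$ is Cauchy. Since each $V_{t,x}^{\e,\delta}$ is centered and, conditionally on $X$, Gaussian, it suffices to control second moments: $\E[|V_{t,x}^{\e,\delta}-V_{t,x}^{\e',\delta'}|^2]=\E_X\big[\|A_{t,x}^{\e,\delta}-A_{t,x}^{\e',\delta'}\|_{\mathcal H}^2\big]$, and by polarization this reduces to the convergence of $\E_X[\langle A_{t,x}^{\e,\delta},A_{t,x}^{\e',\delta'}\rangle_{\mathcal H}]$. Expanding the inner product in the spectral form and taking $\E_X$, each such term has the shape
\[
\int_{[0,t]^2}\!\!\int_{\R^d}\!\Big(\!\int_0^t\!\!\int_0^t\varphi_\delta(t-s-r)\varphi_{\delta'}(t-s'-r')\,\E_X\!\big[e^{-\iota\xi\cdot(X_s^x-X_{s'}^x)}\big]\,ds\,ds'\Big)\widehat{q_\e}(\xi)\widehat{q_{\e'}}(\xi)\,|r-r'|^{-\beta_0}\mu(d\xi)\,dr\,dr',
\]
and as $\e,\e',\delta,\delta'\to0$ the mollifiers localize $r\to s$, $r'\to s'$ and $\widehat{q_\e}\widehat{q_{\e'}}\to 1$, so the expression converges (by dominated convergence) to
\[
\int_0^t\!\!\int_0^t |r-s|^{-\beta_0}\,\E_X\big[e^{-\iota\xi\cdot(X_r^x-X_s^x)}\big]\,\mu(d\xi)\,\cdots\ =\ \int_0^t\!\!\int_0^t|r-s|^{-\beta_0}\,\E_X[\gamma(X_r^x-X_s^x)]\,dr\,ds.
\]
The common limit being finite and independent of which pair of parameters we took shows the Cauchy property; the finiteness is exactly where Assumption \ref{H} and condition \eqref{dalang-stra} enter, via the bound $\E_X[e^{-\iota\xi\cdot(X_r^x-X_s^x)}]\lesssim \exp(-c|r-s|\Psi(\xi))$ obtained from $p_t^{(x)}(y)\le c_0 P_t(y-x)$ and \eqref{e:P-bound}, together with $\int(1+\Psi(\xi))^{-(1-\beta_0)}\mu(d\xi)<\infty$ after integrating out the time variables (a computation I would defer to the next subsection / cite from there). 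Denote the limit by $V_{t,x}$.

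Finally, to identify the conditional law of $V_{t,x}$: the conditional-Gaussian property passes to the $L^2$-limit (a limit in $L^2(\Omega)$ of conditionally centered Gaussian variables, along a subsequence converging a.s., is conditionally centered Gaussian), so conditionally on $X$, $V_{t,x}$ is Gaussian with mean $0$; its conditional variance is $\lim \|A_{t,x}^{\e,\delta}\|_{\mathcal H}^2$, which by the computation above equals $\int_0^t\int_0^t|r-s|^{-\beta_0}\gamma(X_r^x-X_s^x)\,dr\,ds$, giving \eqref{cov}. The main obstacle is the uniform-in-$(\e,\delta)$ domination needed to pass the limit inside the integrals and to guarantee the limiting Hamiltonian $\int_0^t\int_0^t|r-s|^{-\beta_0}\gamma(X_r^x-X_s^x)\,dr\,ds$ is a.s.\ (and in expectation) finite; this hinges precisely on the heat-kernel-type bound from Assumption \ref{H} and on the strong Dalang condition \eqref{dalang-stra}, and I would isolate it as the key lemma of the Fourier-analytic argument in Section \ref{sec:Hamiltonian}.
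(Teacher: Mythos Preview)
Your overall strategy---work conditionally on $X$, show $A_{t,x}^{\e,\delta}\in\mathcal H$ via the spectral form, establish the Cauchy property by controlling $\E_X[\langle A_{t,x}^{\e,\delta},A_{t,x}^{\e',\delta'}\rangle_{\mathcal H}]$, and identify the conditional law of the limit---matches the paper's. The first step (membership in $\mathcal H$) is fine as you wrote it.

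The gap is in the domination step. You claim the bound
\[
\big|\E_X\big[e^{-\iota\xi\cdot(X_r^x-X_s^x)}\big]\big|\ \lesssim\ \exp\big(-c|r-s|\Psi(\xi)\big)
\]
follows from $p_t^{(x)}(y)\le c_0P_t(y-x)$ and \eqref{e:P-bound}. It does not: a pointwise upper bound on a density gives no control on the modulus of its Fourier transform, because the latter is an oscillatory integral. (Take $P_t$ Gaussian and $p_t(y)=P_t(y)(1+\cos(k\cdot y))/Z_t\le 2P_t(y)/Z_t$; then $\hat p_t(k)$ is of order $1$, not $e^{-ct|k|^2}$.) Without this decay in $\xi$ you have no integrable dominant against $\mu(d\xi)$, so your dominated-convergence argument in spectral variables breaks down.

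The paper avoids this by staying in physical space where $\gamma\ge 0$: it bounds
\[
\E\big[\gamma(X_s^x-X_r^x-(y-z))\big]
=\E\!\int_{\R^d}\gamma(u-X_r^x-(y-z))\,p_{s-r}^{(X_r^x)}(u)\,du
\le c_0\!\int_{\R^d}\gamma(u-(y-z))\,P_{s-r}(u)\,du,
\]
and only then applies Parseval--Plancherel to the last (now sign-definite) convolution, obtaining $\le C\int_{\R^d}e^{-c_2(s-r)\Psi(\xi)}\mu(d\xi)$. Combined with the uniform time-mollifier estimate (Lemma~\ref{l2}), this furnishes the dominant and yields the common limit $\E\int_0^t\int_0^t|r-s|^{-\beta_0}\gamma(X_r^x-X_s^x)\,dr\,ds$, which is finite by \cite[Proposition 3.2]{Song17}. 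If you want to keep your Fourier-side bookkeeping, integrate out $\xi$ against $\mu$ \emph{before} invoking Assumption~\ref{H}, so that you are bounding $\E_X[\gamma(\cdot)]$ rather than a characteristic function.

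A smaller point: ``its conditional variance is $\lim\|A_{t,x}^{\e,\delta}\|_{\mathcal H}^2$'' is not yet justified by the computation you sketched, which only shows convergence of $\E_X\|A_{t,x}^{\e,\delta}\|_{\mathcal H}^2$. The paper closes this with Scheff\'e's Lemma (nonnegative integrands with convergent expectations give $L^1$-convergence), which then identifies the pathwise limit as in \eqref{cov}. You should make this step explicit.
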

\begin{proof}
	% {\red {The proof essentially follows from that of \cite[Theorem 4.1]{Song17}.}}
Firstly, in order to show $A_{t,x}^{\varepsilon,\delta}\in\mathcal H$ for $\varepsilon, \delta>0$, we compute 
	\begin{equation} 
 \begin{aligned}
 \label{e:AA}
	\la A_{t,x}^{\varepsilon,\delta},A_{t,x}^{\varepsilon',\delta'}\ra_{\mathcal H}&=\int_{[0,t]^4}\int_{\R^{2d}}\varphi_{\delta}(t-s-u)q_\eps(X_s^x-y)|u-v|^{-\beta_0}\\&\qquad\qquad \times \gamma(y-z)\varphi_{\delta'}(t-r-v)q_{\eps'}(X_r^x-z)dydzdudvdsdr.	
	\end{aligned}
 \end{equation}
 By Lemma  \ref{l2} below, we have, for $\varepsilon, \delta, \varepsilon', \delta'>0$, 
\begin{equation}\label{e:est-A}
	\la A_{t,x}^{\varepsilon,\delta},A_{t,x}^{\varepsilon',\delta'}\ra_{\mathcal H} \le C \int_0^t\int_0^t\int_{\R^{2d}}q_\eps(X_s^x-y)q_{\eps'}(X_r^x-z) |s-r|^{-\beta_0}\gamma(y-z) dydzdrds,
\end{equation}
and then the Parseval–Plancherel identity implies  that 
\begin{align*}
	\la A_{t,x}^{\varepsilon,\delta},A_{t,x}^{\varepsilon',\delta'}\ra_{\mathcal H} &\le C \int_0^t\int_0^t |s-r|^{-\beta_0}  dsdr \times \int_{\R^{d}}\hat q_\eps(\xi)\hat q_{\eps'}(\xi)\mu(d\xi). 
\end{align*}
The product of the integrals on the right-hand side is finite due to the condition $\beta_0<1$ and  the fact \[\int_{\R^{d}}\hat q_\eps(\xi)\hat q_{\eps'}(\xi)\mu(d\xi)\le \int_{\R^{d}}\hat q_\eps(\xi)\mu(d\xi)<\infty,\] where the second step follows from the fact that $\mu(d\xi)$ is a tempered distribution while $\hat q_\eps(\xi)$ decays exponentially fast as $|\xi|\to\infty$. Thus, we have $A_{t,x}^{\varepsilon,\delta}\in \mathcal H$ a.s.

Now we show that $\{A_{t,x}^{\varepsilon,\delta}\}_{\varepsilon, \delta>0}$ is a Cauchy sequence in $L^2(\Omega;\mathcal H)$ as $(\varepsilon,\delta)\to0.$  Taking expectation for \eqref{e:AA}, we have
\begin{align*}
    \E \la A_{t,x}^{\varepsilon,\delta},A_{t,x}^{\varepsilon',\delta'}\ra_{\mathcal H}&=\int_{[0,t]^4}\int_{\R^{2d}}\varphi_{\delta}(t-s-u)\varphi_{\delta'}(t-r-v) |u-v|^{-\beta_0}\\
    &\qquad\qquad \times  q_\eps(y)  q_{\eps'}(z) \E[\gamma(X_s^x-X_r^x-(y-z))]dydz dudvdsdr.
\end{align*}
Without loss of generality,  we assume $r<s$. Then, by the Markov property of $X$ and  Assumption~\ref{H}, we have
\begin{align*}
    &\E[\gamma(X_s^x-X_r^x-(y-z))]=\E\big[\E\left[\gamma(X_s^x-X_r^x-(y-z))|X_{r}^x \right]\big]\\
    &= \E \int_{\R^d} \gamma(u-X_r^x-(y-z)) p_{s-r}^{(X_r^x)}(u) du \\
    &\le c_0\E
\int_{\R^d} \gamma(u-X_r^x-(y-z)) P_{s-r}(u-X_r^x) du\\
    &\le C \int_{\R^d} e^{-c_2 (s-r)\Psi(\xi) }\mu(d\xi),
\end{align*}
and thus 
\begin{align*}
\int_{\R^{2d}} q_\eps(y)  q_{\eps'}(z) \E[\gamma(X_s^x-X_r^x-(y-z))]dydz\le C \int_{\R^d} e^{-c_2 (s-r)\Psi(\xi) }\mu(d\xi). 
\end{align*}

Combining this fact with Lemma \ref{l2}, by dominated convergence theorem we can get that, as $\eps, \delta, \eps',\delta'$ go to zero, 
\[ \E \la A_{t,x}^{\varepsilon,\delta},A_{t,x}^{\varepsilon',\delta'}\ra_{\mathcal H} \to \E\int_0^t\int_0^t |s-r|^{-\beta_0} \gamma(X_s^x-X_r^x)drds,\]
where the right-hand side is finite. 
Indeed, noting $\E\big[\gamma(X_s^x-X_r^x)\big]
\leq C\int_{\R^d} e^{-c_2(s-r)\Psi(\xi)}\mu(d\xi),$ we get 
by Fubini's theorem,
\[
\begin{aligned}
\E\int_0^t\int_0^t |s-r|^{-\beta_0}\gamma(X_s^x-X_r^x)drds 
\leq C\int_{\R^d}\left(\int_0^t\int_0^t |s-r|^{-\beta_0}e^{-c_2(s-r)\Psi(\xi)}drds\right)\mu(d\xi).
\end{aligned}
\]
By applying a change of variables together with Lemma~\ref{lem:time-singular-integral}, we get
\[
\int_0^t\int_0^t |s-r|^{-\beta_0}e^{-c_2(s-r)\Psi(\xi)}drds
=
\int_0^t (t-u)u^{-\beta_0}e^{-c_2u\Psi(\xi)}du\le C_{t,\beta_0} (1+\Psi(\xi))^{\beta_0-1}.
\]
%Applying Lemma~\ref{lem:time-singular-integral}, we obtain
%\[
%\int_0^t (t-u)u^{-\beta_0}e^{-u\Psi(\xi)}du
%\le C_t (1+\Psi(\xi))^{\beta_0-1}.
%\]
%Therefore,
%\[
%\int_0^t\int_0^t |s-r|^{-\beta_0}e^{-|s-r|\Psi(\xi)}\,dr\,ds
%\le C_t (1+\Psi(\xi))^{\beta_0-1}.
%\]
Consequently, this with condition \eqref{dalang-stra} yields
\[
\E\int_0^t\int_0^t |s-r|^{-\beta_0}\gamma(X_s^x-X_r^x)drds
\le C_{t,\beta_0} \int_{\R^d} \frac{1}{(1+\Psi(\xi))^{1-\beta_0}}\mu(d\xi)<\infty.
\] This implies that $\big\{V_{t,x}^{\eps, \delta}=W(A_{t,x}^{\eps, \delta})\big\}_{\eps, \delta>0}$ is an $L^2$-Cauchy sequence  as $(\eps,\delta)\to0$. 

Finally, the formula \eqref{cov} holds true because \[\lim_{\eps,\delta\to 0}\la A_{t,x}^{\varepsilon,\delta},A_{t,x}^{\varepsilon,\delta}\ra_{\mathcal H} = \int_0^t\int_0^t |r-s|^{-\beta_0} \gamma(X_r^x-X_s^x) drds\]
 by Scheff\'e's Lemma, where the limit is taken in  $L^1$.
	\end{proof}

\begin{theorem}\label{thm:exp}
	Assume Assumption \ref{H} and condition \eqref{dalang-stra}. Then, for any $\beta\in \R$, we have\begin{align*}
	\E_W\E_{X}\left[\exp\left(\beta\int_0^t\int_{\R^d}\boldsymbol{\delta}(X^x_{t-r}-y)W(dr,dy)\right)\right]<\infty.
	\end{align*}
\end{theorem}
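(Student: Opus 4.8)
The plan is to condition on the Feller process $X$ and to invoke Theorem~\ref{convergence}. Conditionally on $X$, the Hamiltonian $V_{t,x}=\int_0^t\int_{\R^d}\mathfrak{\delta}(X_{t-r}^x-y)W(dr,dy)$ is a centered Gaussian variable with variance $\sigma_{t,x}^2:=\int_0^t\int_0^t|r-s|^{-\beta_0}\gamma(X_r^x-X_s^x)\,drds$, hence $\E_W[e^{\beta V_{t,x}}\mid X]=e^{\frac{\beta^2}{2}\sigma_{t,x}^2}$ and, by Tonelli,
\[
\E_W\E_X\big[e^{\beta V_{t,x}}\big]=\E_X\big[e^{\frac{\beta^2}{2}\sigma_{t,x}^2}\big].
\]
Thus the statement is equivalent to $\E_X[e^{\lambda\sigma_{t,x}^2}]<\infty$ for every $\lambda>0$, and it suffices to establish a moment estimate $\E_X[\sigma_{t,x}^{2n}]\le C^n(n!)^\theta$ for all $n\ge1$, with $C=C(t)$ and some $\theta<1$: indeed, then $\E_X[e^{\lambda\sigma_{t,x}^2}]=\sum_{n\ge0}\frac{\lambda^n}{n!}\E_X[\sigma_{t,x}^{2n}]\le\sum_{n\ge0}(\lambda C)^n(n!)^{\theta-1}<\infty$. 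I expect $\theta=\beta_0$ to come out of the argument.

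To bound the moments I would expand
\[
\E_X\big[\sigma_{t,x}^{2n}\big]=\int_{[0,t]^{2n}}\prod_{j=1}^n|r_j-s_j|^{-\beta_0}\;\E_X\Big[\prod_{j=1}^n\gamma\big(X_{r_j}^x-X_{s_j}^x\big)\Big]\,drds,
\]
represent $\gamma$ through its (symmetric) spectral measure, $\gamma(z)=\int_{\R^d}e^{\iota\xi\cdot z}\mu(d\xi)$, and partition $[0,t]^{2n}$ into the $(2n)!$ subregions on which the $2n$ time variables are strictly ordered. On each subregion, writing $0<\theta_1<\cdots<\theta_{2n}<t$ for the ordered times and $\tau_k=\theta_k-\theta_{k-1}$ (with $\theta_0=0$), the Markov property applied along the chain $\theta_1,\dots,\theta_{2n}$, together with the density bound of Assumption~\ref{H} and a change of variables to the successive increments of $X$, factorizes the oscillatory integral that appears and yields
\[
\E_X\Big[\prod_{j=1}^n\gamma\big(X_{r_j}^x-X_{s_j}^x\big)\Big]\le (c_0c_1)^{2n}\int_{\R^{dn}}\prod_{k=1}^{2n}\exp\!\big(-c_2\tau_k\Psi(\zeta_k)\big)\,\mu^{\otimes n}(d\xi),
\]
where $\zeta_k$ is the sum of the $\xi_j$ over those $j$ with $(\theta_{k-1},\theta_k)\subseteq(r_j,s_j)$; this is the same Fourier computation already used in the proof of Theorem~\ref{convergence}.

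It then remains to integrate in $(\theta,\xi)$. For the time integral I would dominate each singularity $|r_j-s_j|^{-\beta_0}$ by $\tau_k^{-\beta_0}$ for a gap $k$ lying inside $(r_j,s_j)$ (these gaps can be chosen distinct over $j$); combined with the Dirichlet integral on the simplex $\{\tau_k>0,\ \sum_k\tau_k<t\}$ this produces the factorial gain $\frac{C^n t^{(2-\beta_0)n}}{(n!)^{2-\beta_0}}$, while the surviving exponential factors control the spectral integral in terms of $\Big(\int_{\R^d}(1+\Psi(\xi))^{-(1-\beta_0)}\mu(d\xi)\Big)^n$, which is finite by the strong Dalang condition~\eqref{dalang-stra}. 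Summing over the $(2n)!$ orderings and using $(2n)!\le 4^n(n!)^2$ gives $\E_X[\sigma_{t,x}^{2n}]\le \tilde C^n\,(n!)^{\beta_0}\,t^{(2-\beta_0)n}$, which has the required form with $\theta=\beta_0<1$.

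The delicate point, and the main obstacle, is exactly this joint estimate of the time-simplex and spectral integrals: one must split the $n$ temporal singularities and the $2n$ exponential factors $e^{-c_2\tau_k\Psi(\zeta_k)}$ so that the full Dirichlet gain $(n!)^{-(2-\beta_0)}$ is retained — which is what defeats the $(2n)!\sim(n!)^2$ loss coming from the sum over orderings — while at the same time the integration against $\mu^{\otimes n}$ factorizes into $n$ copies of the convergent integral in~\eqref{dalang-stra}, despite the $\zeta_k$'s being overlapping sums of the $\xi_j$'s governed by the nesting/overlap pattern of the intervals $(r_j,s_j)$. Tracking this active-interval structure is precisely the Fourier-analytic bookkeeping developed in \cite{Song17,RSW23+}, and it is there that Assumption~\ref{H}, the non-negative definiteness of $\gamma$ (ensuring the representation $\gamma(z)=\int e^{\iota\xi\cdot z}\mu(d\xi)$ with a non-negative measure $\mu$, so that all coefficients in a given $\zeta_k$ carry the same sign) and condition~\eqref{dalang-stra} all come into play; the case $n=1$ of the moment estimate is just the finiteness of $\E_X[\sigma_{t,x}^2]$, i.e.\ \cite[Proposition~3.2]{Song17}.
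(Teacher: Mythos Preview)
Your reduction is exactly what the paper does: condition on $X$, use that $V_{t,x}$ is centered Gaussian with variance $\sigma_{t,x}^2=\int_0^t\int_0^t|r-s|^{-\beta_0}\gamma(X_r^x-X_s^x)\,drds$, and reduce to $\E_X[e^{\lambda\sigma_{t,x}^2}]<\infty$, which is proved separately as Theorem~\ref{exp'}. The moment expansion and the partition into $(2n)!$ orderings are also the same.

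The difference is in how the product $\E_X\big[\prod_{j}\gamma(X_{t_j^+}-X_{t_j^-})\big]$ is bounded. You expand each $\gamma$ in Fourier, apply Assumption~\ref{H} across all $2n$ gaps at once, and obtain a bound with $2n$ exponential factors $\exp(-c_2\tau_k\Psi(\zeta_k))$ where the $\zeta_k$'s are \emph{overlapping} sums of the $\xi_j$'s; you then (correctly) flag the disentanglement of these sums as the main obstacle and defer to \cite{Song17,RSW23+}. The paper avoids this obstacle by a simpler device: it peels off one factor at a time via iterative conditioning. Taking $k$ with $t_k^+$ largest and $t_k^*$ the time immediately to its left, one bounds
\[
\E\big[\gamma(X_{t_k^+}-X_{t_k^-})\,\big|\,\mathcal F_{t_k^*}\big]\le C\int_{\R^d}\exp\big(-c_2(t_k^+-t_k^*)\Psi(\xi)\big)\,\mu(d\xi),
\]
which is a \emph{deterministic} constant and hence factors out completely. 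Iterating gives \eqref{e:prod-gamma-X} with $m$ decoupled integrals in the individual variables $\xi_1,\dots,\xi_m$, so the spectral integral is trivially a product and no bookkeeping of ``active intervals'' is needed.

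A second, minor difference: the paper does not aim for a sharp $(n!)^{\beta_0}$-type moment bound via the Dirichlet integral. Instead, after the change of variables $r_i=s_i-s_{i-1}$ on $\Sigma_t^{2m}$, it inserts a free parameter $M$ through $1\le e^{c_2M(t-\sum r_i)}$, extends the simplex to $\R_+^m$, and obtains a geometric series with ratio proportional to $\int_{\R^d}(M+\Psi(\xi))^{-(1-\beta_0)}\mu(d\xi)$, which is made small by choosing $M$ large under \eqref{dalang-stra}. Both summability arguments work; the paper's is quicker and does not require tracking the exact power of $n!$.
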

\begin{proof}
Recalling \eqref{V_{t,x}}, we have
	\begin{align*}
			&\E_W\E_{X}\left[\exp\left(\beta V_{t,x}\right)\right]=\E_X\left[\exp\left(\tfrac{\beta^2}{2}Var[V_{t,x}]\right)\right]\\
   &=\E_X\left[\exp\left(\tfrac{\beta^2}{2}\int_0^t\int_0^t|r-s|^{-\beta_0}\gamma(X_r^x-X_s^x)drds\right)\right].
	\end{align*}
Then the desired result follows from Theorem \ref{exp'}.
\end{proof}

\begin{theorem}\label{exp'}
Assume Assumption \ref{H} and condition \eqref{dalang-stra}. Then, we have
for all $\beta\in\R$,
	\begin{align*}
		\E\left[\exp\left(\beta\int_0^t\int_0^t|r-s|^{-\beta_0}\gamma(X_r^x-X_s^x)drds\right)\right]<\infty.
	\end{align*}
\end{theorem}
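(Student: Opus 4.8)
The plan is to establish the exponential integrability via the standard moment method: it suffices to show that the moments $M_n := \E\big[(\int_0^t\int_0^t |r-s|^{-\beta_0}\gamma(X_r^x-X_s^x)\,drds)^n\big]$ grow no faster than $C^n (n!)^\theta$ for some $\theta<1$ (in fact, for the full exponential integrability of $\exp(\beta(\cdot))$ for \emph{all} $\beta$, one needs $\theta<1$; the case $\theta=1$ would only give exponential integrability for small $\beta$). By expanding the $n$th power and using symmetry,
\begin{align*}
M_n = n!\int_{[0,t]^n_<}\int_{[0,t]^n_<} \prod_{j=1}^n |r_j-s_j|^{-\beta_0}\,\E\Big[\prod_{j=1}^n \gamma(X_{r_j}^x-X_{s_j}^x)\Big]\,d\mathbf{r}\,d\mathbf{s},
\end{align*}
where I have ordered the time variables; the combinatorial factor and the precise pairing of the two ordered simplices will need care, but the key analytic input is a bound on the expectation of the product.

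First I would pass to Fourier variables: writing $\gamma(x)=\int_{\R^d} e^{\iota \xi\cdot x}\mu(d\xi)$, the expectation $\E[\prod_j \gamma(X_{r_j}^x-X_{s_j}^x)]$ becomes an integral over $\xi_1,\dots,\xi_n$ of $\E[\exp(\iota\sum_j \xi_j\cdot(X_{r_j}^x-X_{s_j}^x))]$. After reordering all the time points $r_j,s_j$ into a single increasing sequence $0\le \tau_1\le\dots\le\tau_{2n}\le t$, the increments $X_{\tau_{k+1}}^x-X_{\tau_k}^x$ appear, and each such increment is weighted by a partial sum $\eta_k$ of the $\pm\xi_j$'s. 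Using the Markov property of $X$ together with Assumption \ref{H} — specifically $p_t^{(x)}(y)\le c_0 P_t(y-x)$ and $\hat P_t(\xi)\le c_1 e^{-c_2 t\Psi(\xi)}$ — I would bound the conditional characteristic function of each increment by $c_1 e^{-c_2(\tau_{k+1}-\tau_k)\Psi(\eta_k)}$, so that
\begin{align*}
\Big|\E\Big[\prod_j \gamma(X_{r_j}^x-X_{s_j}^x)\Big]\Big| \le c_1^{2n}\int_{(\R^d)^n}\prod_{k} e^{-c_2(\tau_{k+1}-\tau_k)\Psi(\eta_k)}\,\mu(d\xi_1)\cdots\mu(d\xi_n).
\end{align*}
This is precisely the structure exploited in \cite{Song17, RSW23+}, and at this point the argument becomes purely analytic, involving the time weights $|r_j-s_j|^{-\beta_0}$ and the exponential Fourier factors.

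The main obstacle is the bookkeeping that combines the singular time kernels $\prod_j |r_j-s_j|^{-\beta_0}$ with the product of exponentials $\prod_k e^{-c_2(\tau_{k+1}-\tau_k)\Psi(\eta_k)}$ and then integrates out both the time simplex and the $\xi$-variables against $\mu$, in such a way that each of the $n$ "blocks" contributes a single factor of the finite constant $\int_{\R^d}(1+\Psi(\xi))^{-(1-\beta_0)}\mu(d\xi)$ coming from condition \eqref{dalang-stra}, leaving an overall $(n!)^{-(1-\beta_0)}$-type gain from the time integration over the simplex. Concretely, after integrating the innermost time variable against $|r-s|^{-\beta_0}$ one produces a factor like $\Psi(\eta)^{-(1-\beta_0)}$ (up to lower-order terms handled by the $1+\Psi$ version), and iterating this $n$ times while keeping track of the nested simplex volumes yields $M_n \le C^n (n!)^{\beta_0}$, which is summable after multiplication by $\beta^n/n!$ for every $\beta$ since $\beta_0<1$. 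I expect the cleanest route is to invoke the relevant lemma/proposition from \cite{Song17} or \cite{RSW23+} for this Fourier-analytic estimate verbatim once the reduction to the characteristic-function bound under Assumption \ref{H} is in place; indeed the statement already anticipates this by citing \cite[Proposition 3.2]{Song17} in the proof of Theorem \ref{convergence}, so the honest work here is checking that Assumption \ref{H} is exactly the hypothesis those estimates require.
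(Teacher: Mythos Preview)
Your overall strategy --- Taylor-expand, order the $2n$ time points, use the Markov property to factorize, then invoke the Dalang integral \eqref{dalang-stra} --- matches the paper's. The gap is in how you implement the Markov step.

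You propose to write $\gamma(x)=\int e^{\iota\xi\cdot x}\mu(d\xi)$ first and then bound the conditional characteristic function of each increment by $c_1 e^{-c_2(\tau_{k+1}-\tau_k)\Psi(\eta_k)}$. This is exactly right when $X$ is a L\'evy process (it is then an identity, not a bound), which is the setting of \cite{Song17}. Under Assumption~\ref{H}, however, it does not follow: the pointwise density bound $p_t^{(x)}(y)\le c_0 P_t(y-x)$ does \emph{not} transfer to a bound on the modulus of the characteristic function, because the Fourier integral is oscillatory. In general $0\le f\le g$ with $\hat g\ge 0$ does not imply $|\hat f|\le\hat g$, so the step ``bound the conditional characteristic function of each increment by $c_1 e^{-c_2(\tau_{k+1}-\tau_k)\Psi(\eta_k)}$'' fails for a generic Feller process.

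The paper fixes this by reversing the order of operations: it keeps $\gamma$ in physical space and peels off one factor at a time. Taking $t_k^+=s_{2m}$ and $t_k^*=s_{2m-1}$ its left neighbour in the ordered sequence, the Markov property plus non-negativity of $\gamma$ give
\[
\E\big[\gamma(X_{t_k^+}-X_{t_k^-})\,\big|\,\mathcal F_{t_k^*}\big]
=\int_{\R^d}\gamma(y-X_{t_k^-})\,p_{t_k^+-t_k^*}^{(X_{t_k^*})}(y)\,dy
\le c_0\int_{\R^d}\gamma(y)\,P_{t_k^+-t_k^*}(y-(X_{t_k^*}-X_{t_k^-}))\,dy,
\]
and \emph{now} Parseval (both factors non-negative, $\hat P_t\ge 0$ by Assumption~\ref{H}) yields the clean bound $C\int_{\R^d} e^{-c_2(t_k^+-t_k^*)\Psi(\xi)}\mu(d\xi)$ with a single $\xi$ rather than a partial sum $\eta_k$. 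Iterating gives a product of $m$ decoupled one-variable Dalang integrals, and there is no maximal-principle step over shifted arguments to worry about.

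A secondary point: the growth $M_n\le C^n(n!)^{\beta_0}$ you anticipate is not what emerges. The paper obtains, after the change of variables $r_i=s_i-s_{i-1}$, a bound of the shape
$M_m\lesssim e^{c_2Mt}\,m!\,\tbinom{2m}{m}\,t^m\big(\int_{\R^d}(M+\Psi(\xi))^{-(1-\beta_0)}\mu(d\xi)\big)^m$,
i.e.\ growth like $m!\cdot(\text{const}_M)^m$. Exponential integrability for \emph{every} $\beta$ then comes from choosing $M$ large so the geometric ratio is below $1$, not from sub-factorial growth of the moments.
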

\begin{proof}

In the proof we shall omit the superscript $x$ in $X^x_t$. By Taylor's expansion, we have
\begin{align*}
	&\E\left[\exp\left(\beta\int_0^t\int_0^t|r-s|^{-\beta_0}\gamma(X_r-X_s)drds\right)\right]\\
&= \sum_{m=0}^\infty \frac{\beta^m}{m!} \int_{[0,t]^{2m}}\prod_{i=1}^m|s_{2i}-s_{2i-1}|^{-\beta_0} \E\left[\prod_{i=1}^m\gamma(X_{s_{2i}}-X_{s_{2i-1}})\right]d{\bf s} \\
&=\sum\limits_{m=0}^\infty \frac{\beta^m}{m!}\sum\limits_{\tau\in S_{2m}}\int_{[0,t]_{<}^{2m}}\prod\limits_{i=1}^m|s_{\tau(2i)}-s_{\tau(2i-1)}|^{-\beta_0}\E\left[\prod\limits_{i=1}^m\gamma(X_{s_{\tau(2i)}}-X_{s_{\tau(2i-1)}})\right]d{\bf s},
\end{align*}
where $[0,t]^{2m}_< = \{0<s_1<\ldots<s_{2m}<t\}$, $d{\bf s} = ds_1\ldots ds_{2m}$, and $S_{2m}$ is the set of all permutations on $\{1, 2,\dots, 2m\}$.

For each fixed $\tau \in S_{2m}$, we denote by $t_i^+ = \max \{s_{\tau(2i)}, s_{\tau(2i-1)}\}$ and $t_i^- = \min \{s_{\tau(2i)}, s_{\tau(2i-1)}\}$. We have,
\begin{equation}\label{e:Taylor-u}
\begin{aligned}
&\E\left[\exp\left(\beta\int_0^t\int_0^t|r-s|^{-\beta_0}\gamma(X_r-X_s)dsdr\right)\right]\\
&=\sum\limits_{m=0}^\infty\frac{\beta^m}{m!}\sum\limits_{\tau\in S_{2m}}\int_{[0,t]_{<}^{2m}}\prod\limits_{i=1}^m|t_i^+-t_i^-|^{-\beta_0}\E\left[\prod\limits_{i=1}^m\gamma(X_{t_i^+}-X_{t_i^-})\right]d{\bf s},
\end{aligned}
\end{equation}

Let $t_i^*$ be the unique $s_i$ that is closest to $t_i^+$ from the left. Then we always have $t_i^+ \ge t_i^* \ge t_i^-$ for $1 \le i \le m$. Let $k$ be such that $t_k^+ = s_{2m}$. Then,
\begin{align*}
	\E \left[ \prod_{i=1}^m \gamma( X_{t_i^+}-X_{t_i^-}) \right]
	= \E \left[ \prod_{i\not=k} \gamma( X_{t_i^+}-X_{t_i^-} ) \E \left[ \gamma( X_{t_k^+}-X_{t_k^-} ) \bigg| \mathcal F_{t_k^*} \right] \right]
\end{align*}
By Assumption \ref{H}, we have 
\begin{align*}
	& \E \left[ \gamma(X_{t_k^+}-X_{t_k^-} ) \bigg| \mathcal F_{t_k^*} \right]
	= \int_{\R^d} p_{t_k^+-t_k^*}^{(X_{t_k^*})}(y) \gamma( y-X_{t_k^-}) dy \\
	&\le c_0 \int_{\R^d} P_{t_k^+-t_k^*}(y) \gamma( y + X_{t_k^*} - X_{t_k^-} ) dy \\
	&\le c_0c_1 \int_{\R^d} \exp\left(-c_2(t_k^+-t_k^*) \Psi(\xi) \right) \hat \gamma(\xi) \exp\left(-\iota \xi\cdot (X_{t_k^*} - X_{t_k^-}) \right) d\xi \\
 &\le C \int_{\R^d} \exp\left(-c_2(t_k^+-t_k^*) \Psi(\xi) \right)   \mu(d\xi), 
\end{align*}
where the second equality follows from the Parseval-Plancherel identity. Hence, we have
\begin{align*}
	\E \left[ \prod_{i=1}^m \gamma( X_{t_i^+}-X_{t_i^-} ) \right]
	\le C \int_{\R^d} \exp\left(-c_2(t_k^+-t_k^*) \Psi(\xi)\right)   \mu(d\xi) \times \E \left[ \prod_{i\not=k} \gamma( X_{t_i^+}-X_{t_i^-} ) \right],
\end{align*}
and by repeating this procedure, we have
\begin{equation}\label{e:prod-gamma-X}
	\E \left[ \prod_{i=1}^m \gamma(X_{t_i^+}-X_{t_i^-} ) \right]
	\le C^m \int_{\R^{md}}  \prod_{i=1}^m \exp\left(-c_2(t_i^+-t_i^*) \Psi(\xi_i) \right)   \mu(d\xi_i).
\end{equation}
Thus, we have
\begin{align*}
	& \E \left[ \exp \left( \beta \int_0^t\int_0^t |r-s|^{-\beta_0} \gamma(X_r-X_s) drds \right) \right] \\
	\le& \sum_{m=0}^{\infty} C^m \dfrac{\beta^m}{m!}  \sum_{\tau \in S_{2m}} \int_{[0,{ t}]^{2m}_<} d{\bf s}  \int_{\R^{md}}  \boldsymbol{\mu}(d\boldsymbol{\xi})  \exp\left(-c_2\sum_{i=1}^m(t_i^+-t_i^*) \Psi(\xi_i) \right)    \prod_{i=1}^m |t_i^+-t_i^-|^{-\beta_0}.
 \end{align*}

 To deal with the above integrals, 
 we apply the change of variables $r_i=s_{i}-s_{i-1} $ for $i=1,\dots, 2m$ with the convention $s_0=0$ and then we have \[[0,t]_<^{2m}=[0<s_1<\cdots<s_{2m}<t] = [0<r_1+\cdots+r_{2m}<t]\cap \R_{+}^{2m}:=\Sigma^{2m}_t.\] By the symmetry of the integrals, we have  
 \begin{align} \label{eq-exp}
	& \E \left[ \exp \left( \beta \int_0^t\int_0^t |r-s|^{-\beta_0} \gamma(X_r-X_s) drds \right) \right] \nonumber \\
	\le& \sum_{m=0}^{\infty} (C\beta)^m\dfrac{(2m)!}{m!}  \int_{\Sigma_t^{2m}} d{\bf r}  \int_{\R^{md}}  \boldsymbol{\mu}(d\boldsymbol{\xi})  \exp\bigg(-c_2\sum_{i=1}^m r_i \Psi(\xi_i) \bigg)    \prod_{i=1}^m |r_i|^{-\beta_0} \nonumber \\
%= & \sum_{m=0}^{\infty} (C\beta)^m\dfrac{(2m)!}{m!}  \int_{[0,t]_<^{2m}} d{\bf s}  \int_{\R^{md}}  \boldsymbol{\mu}(d\boldsymbol{\xi})  \exp\left(-c_2\sum_{i=1}^m (s_i-s_{i-1}) \Psi(\xi_i)\right)    \prod_{i=1}^m |s_i-s_{i-1}|^{-\beta_0} \nonumber \\
 %\le & \sum_{m=0}^{\infty} (C\beta)^m \dfrac{(2m)!}{m!} \frac{t^m}{m!} \int_{[0,t]_<^{m}} d{\bf s}  \int_{\R^{md}}  \boldsymbol{\mu}(d\boldsymbol{\xi})  \exp\left(-c_2\sum_{i=1}^m (s_i-s_{i-1}) \Psi(\xi_i) \right)    \prod_{i=1}^m |s_i-s_{i-1}|^{-\beta_0} \nonumber \\
 \leq & \sum_{m=0}^{\infty} (C\beta)^m \dfrac{(2m)!}{m!} \frac{t^m}{m!} \int_{\Sigma_t^{m}} d{\bf r}  \int_{\R^{md}}  \boldsymbol{\mu}(d\boldsymbol{\xi})  \exp\left(-c_2\sum_{i=1}^m r_i \Psi(\xi_i) \right)    \prod_{i=1}^m |r_i|^{-\beta_0} \nonumber \\
\le  & e^{c_2Mt } \sum_{m=0}^{\infty} (C\beta)^m \dfrac{(2m)!}{m!} \frac{t^m}{m!} \int_{\Sigma_t^{m}} d{\bf r}  \int_{\R^{md}}  \boldsymbol{\mu}(d\boldsymbol{\xi})  \exp\left(-c_2\sum_{i=1}^m r_i (M+\Psi(\xi_i)) \right) \prod_{i=1}^m |r_i|^{-\beta_0} \nonumber \\
  \le & e^{c_2Mt} \sum_{m=0}^{\infty} (C\beta)^m\dfrac{(2m)!}{m!} \frac{t^m}{m!} \left(\int_0^t dr  \int_{\R^{d}} \mu(d\xi)  \exp\left(-c_2 r (M+\Psi(\xi)) \right)    |r|^{-\beta_0}\right)^m \nonumber \\
   \le & e^{c_2Mt} \sum_{m=0}^{\infty} (C\beta)^m \dfrac{(2m)!}{m!} \frac{t^m}{m!} \left(  \int_{\R^{d}} \left(\frac{1}{M+\Psi(\xi)} \right)^{1-\beta_0}\mu(d\xi)\right)^m,
 \end{align}
of which the right-hand side is finite if we choose $M$ sufficiently large, by Stirling's formula and Dalang's condition \eqref{dalang-stra}. 
\end{proof}

The following lemma is borrowed from \cite[Lemma A.3]{HNS11}. 
\begin{lemma}\label{l2}
Suppose that $\alpha\in(0,1)$. There exists a constant $C>0$ such that
\begin{align*}	\sup\limits_{\delta,\delta'>0}\int_0^t\int_0^t\varphi_{\delta}(t-s-r)\varphi_{\delta'}(t-s'-r')|r-r'|^{-\alpha}drdr'\leq C|s-s'|^{-\alpha}.
 \end{align*}
\end{lemma}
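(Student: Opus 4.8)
The plan is to reduce the double integral to a negative moment of a concrete random variable and then to control its density. Since the integrand is nonnegative, I would first enlarge the region of integration from $[0,t]^2$ to $\R^2$; and since the expression is symmetric under $(s,\delta)\leftrightarrow(s',\delta')$, I may assume $s<s'$ and set $a:=s'-s>0$ (the case $s=s'$ being trivial, as the right-hand side is then $+\infty$). The change of variables $u=t-s-r$, $v=t-s'-r'$ turns $\varphi_\delta(t-s-r)\,dr$ into $\varphi_\delta(u)\,du$, turns $\varphi_{\delta'}(t-s'-r')\,dr'$ into $\varphi_{\delta'}(v)\,dv$, and gives $r-r'=a+v-u$. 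Letting $U\sim\mathrm{Unif}[0,\delta]$ and $V\sim\mathrm{Unif}[0,\delta']$ be independent (so that $\varphi_\delta,\varphi_{\delta'}$ are their densities) and writing $W:=V-U$, the left-hand side is thus bounded above by $\E[\,|a+W|^{-\alpha}\,]$, and it suffices to prove
\[
\E\bigl[\,|a+W|^{-\alpha}\,\bigr]\le C_\alpha\,a^{-\alpha}
\]
for a constant $C_\alpha$ depending only on $\alpha$, uniformly in $\delta,\delta'>0$.

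The key input is a pointwise bound on the density $p_W$ of $W$, which is a convolution of two box functions: $p_W(w)=\frac{1}{\delta\delta'}\bigl|[0,\delta]\cap[-w,\delta'-w]\bigr|$. From this one reads off that $p_W(w)=0$ for $w\notin[-\delta,\delta']$ and that $p_W(w)\le\min(\delta,\delta')/(\delta\delta')$. Combining the two: if $w<0$ and $p_W(w)\neq0$ then $\delta\ge|w|$, hence $p_W(w)\le 1/\delta\le 1/|w|$, and symmetrically for $w>0$. Therefore $p_W(w)\le 1/|w|$ for every $w\neq0$.

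It remains to split $\int_\R p_W(w)\,|a+w|^{-\alpha}\,dw$ at the level $|a+w|=a/2$. On $\{|a+w|\ge a/2\}$ we bound $|a+w|^{-\alpha}\le(a/2)^{-\alpha}$, so that piece is at most $2^\alpha a^{-\alpha}\int_\R p_W=2^\alpha a^{-\alpha}$. On $\{|a+w|<a/2\}$ we have $w\in(-\tfrac32 a,-\tfrac12 a)$, hence $|w|>a/2$ and $p_W(w)\le 1/|w|<2/a$; using $\alpha<1$ we compute $\int_{\{|a+w|<a/2\}}|a+w|^{-\alpha}\,dw=\int_{-a/2}^{a/2}|z|^{-\alpha}\,dz=\tfrac{2^\alpha}{1-\alpha}a^{1-\alpha}$, so this piece is at most $\tfrac{2^{1+\alpha}}{1-\alpha}a^{-\alpha}$. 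Adding the two bounds gives $\E[|a+W|^{-\alpha}]\le\bigl(2^\alpha+\tfrac{2^{1+\alpha}}{1-\alpha}\bigr)|s-s'|^{-\alpha}$, as claimed. The one point that genuinely needs care is the uniformity in $(\delta,\delta')$: the density $p_W$ cannot be bounded by a single $\delta$-independent constant, and the argument hinges exactly on the estimate $p_W(w)\le 1/|w|$, which interpolates between the two scaling regimes (small support when $\max(\delta,\delta')$ is small, small sup-norm when it is large). Everything else is routine; since this statement is \cite[Lemma A.3]{HNS11}, one could also simply cite it.
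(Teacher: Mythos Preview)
Your argument is correct. The paper does not actually prove this lemma: it simply records it as ``borrowed from \cite[Lemma A.3]{HNS11}'' and moves on, so there is no in-paper proof to compare against. Your write-up supplies a clean self-contained proof: the probabilistic rewriting as $\E[|a+W|^{-\alpha}]$ with $W=V-U$ a difference of independent uniforms is exactly the right normalization, the pointwise bound $p_W(w)\le 1/|w|$ is the crux (and your derivation of it from the support and the sup-norm of the trapezoidal density is accurate), and the final split at $|a+w|=a/2$ handles the near-singularity with the correct use of $\alpha<1$. Since you already note at the end that one could instead cite \cite[Lemma A.3]{HNS11}, your proposal strictly subsumes what the paper does here.
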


{
\begin{lemma}\label{lem:time-singular-integral}
Let $\beta_0\in[0,1)$ and $t>0$. Then, there exists a constant $C_{t,\beta_0}>0$ depending only on $(t,\beta_0)$ such that for all $\lambda\ge0$,
\[
\int_0^t u^{-\beta_0}e^{-\lambda u}\,du
\le C_{t,\beta_0} (1+\lambda)^{\beta_0-1}.
\]
\end{lemma}

\begin{proof}

\textbf{Case 1: $\lambda\in[0,1]$.}
Since $e^{-\lambda u}\le 1$ for all $u\ge0$, we have
\[
\int_0^t u^{-\beta_0}e^{-\lambda u}du
\le \int_0^t u^{-\beta_0}\,du
= \frac{t^{1-\beta_0}}{1-\beta_0}.
\]
On the other hand, for $\lambda\in[0,1]$,
$(1+\lambda)^{\beta_0-1}\ge 2^{\beta_0-1}>0.$
Hence,
\[
\int_0^t u^{-\beta_0}e^{-\lambda u}du
\le \frac{t^{1-\beta_0}}{1-\beta_0}\,2^{1-\beta_0}(1+\lambda)^{\beta_0-1}.
\]

\medskip
\noindent
\textbf{Case 2: $\lambda>1$.}
By the change of variables $v=\lambda u$, we obtain
\[
\int_0^t u^{-\beta_0}e^{-\lambda u}du
= \lambda^{\beta_0-1}\int_0^{\lambda t} v^{-\beta_0}e^{-v}dv\leq \Gamma(1-\beta_0)\lambda^{\beta_0-1}.
\]
Moreover, since $\lambda>1$ and $\beta_0-1<0$, we have
$\lambda^{\beta_0-1}\le 2^{1-\beta_0}(1+\lambda)^{\beta_0-1}.$
Thus,
\[
\int_0^t u^{-\beta_0}e^{-\lambda u}\,du
\le 2^{1-\beta_0}\Gamma(1-\beta_0)(1+\lambda)^{\beta_0-1}.
\]

Thus, the desired inequality follows with 
 $C_{t, \beta_0}:=2^{1-\beta_0}\max\left\{\frac{t^{1-\beta_0}}{1-\beta_0},\,\Gamma(1-\beta_0)\right\}$.
\end{proof}
}

\subsection{Feynman--Kac formulae}\label{sec:FK-str}
In this subsection, we prove that the Feynman--Kac type representation \eqref{e:FK1} is a mild Stratonovich solution to \eqref{spde}. 
Let 
\begin{equation}\label{e:Wed}
\dot W^{\varepsilon,\delta}(t,x)=\int_0^t\int_{\R^d}\varphi_{\delta}(t-s)q_\eps(x-y)W(ds,dy)=W(\varphi_\delta(t-\cdot)q_\eps(x-\cdot))
\end{equation}
be an approximation of $\dot W(t,x)$. As in \cite{HNS11}, we define Stratonovich integral as follows.
\begin{definition}\label{Stratonovich}
	Given a random field $v=\{v(t,x),t\geq 0,x\in\R^d\}$ such that \begin{align*}
		\int_0^T\int_{\R^d}|v(t,x)|dxdt<\infty,
		\end{align*}
	almost surely for all $T>0$, the Stratonovich integral $\int_0^T\int_{\R^d}v(t,x)W(dt,dx)$ is defined as the following limit in probability,
	\begin{align*}
		\lim\limits_{\varepsilon,\delta\to 0} \int_0^T\int_{\R^d}v(t,x)\dot W^{\varepsilon,\delta}(t,x)dxdt.
	\end{align*}
\end{definition}
% \begin{definition}\label{weak-s}
% 	A random field $u=\{u(t,x),t\geq 0,x\in \R^d\}$ is a weak solution of \eqref{spde} if, for any function $\varphi\in C^\infty$  with compact support on $\R^d$, we have\begin{align}\label{weak-u}
% 		\int_{\R^d}u(t,x)\varphi(x)dx=&\int_{\R^d}u_0(x)\varphi(x)dx+ \int_0^t\int_{\R^d}u(s,x)\mathcal {L}\varphi(x)dxds\\&+\int_0^t\int_{\R^d}u(s,x)\varphi(x)W(ds,dx)\nonumber
% 	\end{align}
% almost surely for all $t\geq 0$, where the last term is a Stratonovich stochastic integral.
% \end{definition}

Denote $\mathcal F^W_t:=\sigma\big\{W(s,x), 0\le s\le t, x\in\R^d\big\}$. A mild solution to \eqref{spde} is defined below.
\begin{definition}\label{def:mild-Stra}
   A random field $\{u(t,x),t\ge0, x\in\R^d\}$ is a mild Stratonovich solution to \eqref{spde} if for all $t\ge 0$ and $x\in\R^d$,  $u(t,x)$ is $\mathcal F_t^W$-measurable and the following integral equation holds
\begin{equation}\label{e:mild-stra}
 u(t,x) = \int_{\R^d} p_t^{(x)}(y) u_0(y) dy +\int_0^t \int_{\R^d} p_{t-s}^{(x)}(y) u(s,y) W(ds,dy),  
 \end{equation}
   where the stochastic integral is in the Stratonovich sense of Definition \ref{Stratonovich}.
\end{definition}

\begin{theorem}\label{thm:FK}
	Assume Assumption \ref{H} and condition \eqref{dalang-stra}. Then,\begin{align}\label{u}
		u(t,x)=\E_X\left[u_0(X_t^x)\exp\left(\int_0^t\int_{\R^d}\boldsymbol{\delta}(X_{t-r}^x-y)W(dr,dy)\right)\right]
	\end{align}
is a mild Stratonovich solution of \eqref{spde}.
\end{theorem}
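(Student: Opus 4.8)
The plan is to verify that the Feynman--Kac functional $u(t,x)$ in \eqref{u} satisfies the mild equation \eqref{e:mild-stra}, following the Malliavin-calculus route of \cite{HNS11,HHNT2015,Song17}. First I would check integrability: by Theorem \ref{thm:exp} together with the boundedness of $u_0$, the random variable $u(t,x)$ is in $L^p(\Omega)$ for every $p$, and in particular the spatial integrability hypothesis in Definition \ref{Stratonovich} holds, so the Stratonovich integral in \eqref{e:mild-stra} makes sense once we show the limit exists. The $\mathcal F_t^W$-measurability of $u(t,x)$ is immediate from \eqref{V_{t,x}}, since the Hamiltonian $V_{t,x}$ is built from $W(dr,dy)$ with $r\le t$ and the factor $u_0(X_t^x)$ involves only $X$.

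Next I would introduce the smoothed equation. Let $u^{\eps,\delta}(t,x)=\E_X\big[u_0(X_t^x)\exp(V_{t,x}^{\eps,\delta})\big]$ with $V_{t,x}^{\eps,\delta}=W(A_{t,x}^{\eps,\delta})$ as in \eqref{V}, \eqref{A}. The key is a Feynman--Kac identity at the smoothed level: I expect $u^{\eps,\delta}$ to solve (exactly, by the classical Feynman--Kac formula of Appendix \ref{sec:FK} applied to the PDE $\partial_t v=\mathcal L v + \dot W^{\eps,\delta}v$ with the smooth-in-time, smooth-in-space potential $\dot W^{\eps,\delta}$) the mild equation
\begin{align*}
u^{\eps,\delta}(t,x)=\int_{\R^d}p_t^{(x)}(y)u_0(y)\,dy+\int_0^t\int_{\R^d}p_{t-s}^{(x)}(y)\,u^{\eps,\delta}(s,y)\,\dot W^{\eps,\delta}(s,y)\,dy\,ds,
\end{align*}
where the last integral is an ordinary (pathwise) integral because $\dot W^{\eps,\delta}$ is a genuine function. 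To make this rigorous one applies the Markov property of $X$ and the additivity of the exponent $V_{t,x}^{\eps,\delta}$ under the time-splitting induced by the semigroup: $V_{t,x}^{\eps,\delta}$ decomposes, for a fixed intermediate time $s$, into a part depending on the path up to $t-s$ and a residual part, which after conditioning produces exactly the Duhamel term. The passage then has two limits to control: (i) $u^{\eps,\delta}(t,x)\to u(t,x)$ in $L^2(\Omega)$, which follows from Theorem \ref{convergence} ($V_{t,x}^{\eps,\delta}\to V_{t,x}$ in $L^2$), the uniform exponential-moment bound of Theorem \ref{thm:exp}, and uniform integrability; and (ii) identifying the limit of $\int_0^t\int_{\R^d}p_{t-s}^{(x)}(y)u^{\eps,\delta}(s,y)\dot W^{\eps,\delta}(s,y)\,dy\,ds$ with the Stratonovich integral $\int_0^t\int_{\R^d}p_{t-s}^{(x)}(y)u(s,y)W(ds,dy)$ of Definition \ref{Stratonovich}.

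For step (ii), which I expect to be the main obstacle, the natural strategy is to replace the product $u^{\eps,\delta}(s,y)\dot W^{\eps,\delta}(s,y)$ by its Skorohod (divergence) form plus a trace term, using formula \eqref{formula}: writing $u^{\eps,\delta}(s,y)\,W(\varphi_\delta(s-\cdot)q_\eps(y-\cdot))=\boldsymbol{\delta}\big(u^{\eps,\delta}(s,y)\varphi_\delta(s-\cdot)q_\eps(y-\cdot)\big)+\la Du^{\eps,\delta}(s,y),\varphi_\delta(s-\cdot)q_\eps(y-\cdot)\ra_{\mathcal H}$, then integrating against $p_{t-s}^{(x)}(y)$ in $(s,y)$. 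The divergence part converges (using \eqref{e:formula1} and $L^2$-bounds on $Du^{\eps,\delta}$ and $D^2u^{\eps,\delta}$, which come from differentiating the Feynman--Kac exponential and using the exponential integrability again) to the Skorohod integral $\int_0^t\int_{\R^d}p_{t-s}^{(x)}(y)u(s,y)\,\delta W(s,y)$; the trace term converges to the It\^o--Stratonovich correction $\frac12\int_0^t\int_0^t|r-s|^{-\beta_0}\gamma(\cdots)$-type quantity built into the Stratonovich/Skorohod comparison. Summing the two recovers the Stratonovich integral. The delicate points are: the $L^2$-estimates on the Malliavin derivatives $Du^{\eps,\delta}$ and $D^2u^{\eps,\delta}$ uniformly in $(\eps,\delta)$, which require differentiating under $\E_X$ and controlling $\la A^{\eps,\delta},A^{\eps,\delta}\ra_{\mathcal H}$-type quantities via Theorem \ref{convergence} and Theorem \ref{thm:exp}; the interchange of $\E_X$ with the Wiener integrals and with $D$, justified by Fubini and closability; and verifying the hypotheses of Definition \ref{Stratonovich} for the integrand $p_{t-s}^{(x)}(y)u(s,y)$ (spatial $L^1$-integrability of $p_{t-s}^{(x)}$ in $y$ is automatic, and $u(s,y)$ is bounded in $L^p$ uniformly on compacts by Theorem \ref{thm:exp}). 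Once these convergences are in place, taking $(\eps,\delta)\to0$ in the smoothed mild equation yields \eqref{e:mild-stra}, completing the proof.
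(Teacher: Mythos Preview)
Your proposal is correct and follows essentially the same route as the paper: approximate by $u^{\eps,\delta}=\E_X[u_0(X_t^x)\exp(V_{t,x}^{\eps,\delta})]$, invoke the classical Feynman--Kac formula (Appendix~\ref{sec:FK}) to get a smoothed mild equation, use Theorem~\ref{convergence} plus a uniform exponential bound (the paper proves $\sup_{\eps,\delta}\E[\exp(pV_{t,x}^{\eps,\delta})]<\infty$ separately, not just Theorem~\ref{thm:exp}) for the left-hand side, and handle the right-hand side via the Skorohod/trace decomposition~\eqref{formula}.

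The only organizational difference is in Step~(ii): you propose to decompose $\int p_{t-s}^{(x)}(y)\,u^{\eps,\delta}(s,y)\,\dot W^{\eps,\delta}(s,y)\,dyds$ itself and identify its limit as Skorohod integral plus correction, then match this with the Stratonovich integral. The paper instead applies~\eqref{formula} to the \emph{difference} $v^{\eps,\delta}=u^{\eps,\delta}-u$ and shows $I^{\eps,\delta}=\int p_{t-s}^{(x)}(y)\,v^{\eps,\delta}(s,y)\,\dot W^{\eps,\delta}(s,y)\,dyds\to0$ in $L^1$; since by Definition~\ref{Stratonovich} the Stratonovich integral is exactly $\lim\int p_{t-s}^{(x)}(y)\,u(s,y)\,\dot W^{\eps,\delta}(s,y)\,dyds$, this directly yields the identification without having to name the limit. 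This is slightly cleaner (no need to separately verify that ``Skorohod plus correction'' equals the Definition~\ref{Stratonovich} limit), and in particular only first-order Malliavin bounds on $u^{\eps,\delta}$ are needed via~\eqref{e:formula1}, not $D^2u^{\eps,\delta}$. Note also that for the trace part the paper still has to show that $\int p_{t-s}^{(x)}(y)\langle Du^{\eps,\delta}(s,y),\phi_{s,y}^{\eps,\delta}\rangle_{\mathcal H}\,dyds$ and $\int p_{t-s}^{(x)}(y)\langle Du(s,y),\phi_{s,y}^{\eps,\delta}\rangle_{\mathcal H}\,dyds$ converge to the \emph{same} explicit limit (since $\|\phi^{\eps,\delta}_{s,y}\|_{\mathcal H}\to\infty$, mere $\mathbb D^{1,2}$-convergence of $u^{\eps,\delta}$ does not suffice), which is precisely the ``delicate point'' you flag.
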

\begin{proof}
	Consider the approximation of \eqref{spde} 	\begin{equation}\label{approximate}
\left\{\begin{aligned}
\frac{\partial u^{\varepsilon,\delta}(t,x)}{\partial t}=&\mathcal {L} u^{\varepsilon,\delta}(t,x)+u^{\varepsilon,\delta}(t,x) \dot{W}^{\varepsilon,\delta}(t,x),\,\,\ t\ge 0,\, x\in \mathbb R^d,\\
			u^{\varepsilon,\delta}(0,x)=&u_0(x),\,\,\, x\in\mathbb R^d,
		\end{aligned}
	\right.
	\end{equation}
 where $\dot W^{\eps,\delta}$ is given in \eqref{e:Wed}. 
 Recall the definition of $V_{t,x}^{\eps,\delta}=W(A_{t,x}^{\eps,\delta})$ by \eqref{V} and note that 
\begin{equation}\label{e:W-V}
\E_W\E_X\left[\exp\left(\left|\int_0^t\dot W^{\varepsilon,\delta}(t-s,X_{s}^x)ds\right|\right)\right]=\E_W\E_X\left[\exp\left(\left|V_{t,x}^{\varepsilon,\delta}\right|\right)\right].
\end{equation}
We now show that for any $p>0$, 
\begin{equation}\label{e:exp-V}
    \sup_{\eps,\delta>0}\E\left[\exp\left(pV_{t,x}^{\eps,\delta}\right)\right]<\infty.
\end{equation}
Indeed, by \eqref{e:est-A} and Taylor's expansion, we have the following estimation parallel with \eqref{e:Taylor-u}
\begin{align*}
&\E\left[ \exp\left(pV_{t,x}^{\eps,\delta}\right)\right]=\E_{X}\left[\exp\left(\frac{p^2}2\|A_{t,x}^{\eps,\delta}\|^2_{\mathcal H}\right)\right]\\
&\le \sum\limits_{m=0}^\infty C^m \frac{p^{2m}}{m!} \int_{[0,t]^{2m}}\int_{\R^{2md}} \prod_{i=1}^m q_\eps(y_i) q_\eps(z_i) \prod\limits_{i=1}^m|s_{2i}-s_{2i-1}|^{-\beta_0}\\
&\qquad \qquad \qquad \qquad\qquad  \times \E\left[\prod\limits_{i=1}^m\gamma(X_{s_{2i}}-X_{s_{2i-1}}-(y_i-z_i))\right]d{\bf y}d{\bf z}d{\bf s}\\
&
= \sum\limits_{m=0}^\infty C^m \frac{p^{2m}}{m!} \sum\limits_{\tau\in S_{2m}}\int_{[0,t]_{<}^{2m}}\int_{\R^{2md}} \prod_{i=1}^m q_\eps(y_i) q_\eps(z_i) \prod\limits_{i=1}^m|t_i^+-t_i^-|^{-\beta_0}\\
&\qquad\qquad \qquad \qquad \qquad   \times \E\left[\prod\limits_{i=1}^m\gamma(X_{t_i^+}-X_{t_i^-}-(y_i-z_i))\right]d{\bf y}d{\bf z}d{\bf s},
\end{align*}
where the symbols $t_i^{\pm}$ are defined in the proof of Theorem \ref{exp'}. By the same argument that yields \eqref{e:prod-gamma-X}, we can show 
\[\E \left[ \prod_{i=1}^m \gamma(X_{t_i^+}-X_{t_i^-}-(y_i-z_i) ) \right]
	\le C^m \int_{\R^{md}}  \prod_{i=1}^m \exp\left(-c_2(t_i^+-t_i^*) \Psi(\xi_i) \right)   \mu(d\xi_i),\]
the right-hand side of which is independent of  $(y_i,z_i)'s$. Thus, use the same proof as in the rest part of the proof of Theorem \ref{exp'}, we can prove \eqref{e:exp-V}.
% by \eqref{e:est-A} and Parseval-Plancherel identity, we have
% \begin{align*}
%     \|A_{t,x}^{\eps,\delta}\|_{\mathcal H}^{2}\le C \int_{[0,t]^2}\int_{\R^d}  |\hat q_\eps(\xi)|^2 \exp\left(\iota \xi\cdot(X_s^x-X_r^x)\right)|s-r|^{-\beta_0}\mu(d\xi) dsdr.
% \end{align*}
% Thus, noting that $|\hat q_\eps(\xi)|\le1$, we have
% \begin{align*}
%    \E   \|A_{t,x}^{\eps,\delta}\|_{\mathcal H}^{2n} & \le  C^n \int_{[0,t]^{2n}} \int_{\R^{nd}}\E \exp\left( \iota \sum_{i=1}^n \xi_i\cdot(X_{s_i}^x-X_{r_i}^x) \right) \prod_{i=1}^n |s_i-r_i|^{-\beta_0} \mu(d\boldsymbol{\xi}) d{\bf s} d{\bf r}\\
%    &=C^n \E \left[\left(\int_0^t\int_0^t  |s-r|^{-\beta_0} \gamma(X_s^x-X_r^x)drds\right)^n\right].
% \end{align*}
Therefore, we have
\begin{equation}\label{e:exp-V'}
\E\left[ \exp\left(p|V_{t,x}^{\eps,\delta}|\right)\right] \le 2 \E\left[ \exp\left(pV_{t,x}^{\eps,\delta}\right)\right]<\infty.
\end{equation}
Noting \eqref{e:W-V}  and \eqref{e:exp-V'}, we can apply the classical Feynman--Kac formula (see Appendix \ref{sec:FK}) and get that 
\begin{equation}\label{e:u-ed}
	u^{\varepsilon,\delta}(t,x)=\E_X\left[u_0(X_t^x)\exp\left(\int_0^t\dot W^{\varepsilon,\delta}(t-s,X_{s}^x)ds\right)\right]=\E_X\left[u_0(X_t^x)\exp\left(W(A_{t,x}^{\varepsilon,\delta})\right)\right],
\end{equation}
is a mild solution to \eqref{approximate}, that is 
\begin{equation}\label{e:mild-sol-app}
u^{\eps,\delta}(t,x) = \int_{\R^d} p_t^{(x)}(y)u_0(y) dy + \int_0^t \int_{\R^d} p_{t-s}^{(x)}(y) u^{\eps, \delta}(s,y) \dot W^{\eps,\delta}(s,y)dyds.
\end{equation}

To prove $u(t,x)$ given in \eqref{u} is a mild solution to \eqref{spde}, it suffices to show that both sides of \eqref{e:mild-sol-app} converge to those of \eqref{e:mild-stra} in probability as $(\varepsilon,\delta)\to 0$, respectively.

 {\bf Step 1.} Firstly,   
we shall prove that for any $(t,x)\in\R_{+}\times \R^d$ and all $p\geq 1$,
\begin{align*}
	\lim\limits_{\varepsilon,\delta\to0}\E\big[|u^{\varepsilon,\delta}(t,x)-u(t,x)|^p\big]=0.
\end{align*}
By Theorem \ref{convergence}, $V^{\varepsilon,\delta}_{t,x}$ converges to $V_{t,x}$ in probability as $(\varepsilon,\delta)\to0$.  Hence, to get the $L^p$-convergence of $u^{\varepsilon,\delta}(t,x)$ to $u(t,x)$, noting that $u_0(x)$ is bounded, we only need to show 
\begin{equation}\label{e:p-mom}
\sup_{x\in\R^d}\sup\limits_{\varepsilon,\delta>0}\E\big[|u^{\varepsilon,\delta}(t,x)|^p\big]<\infty,
\end{equation}
which is a direct consequence of \eqref{e:exp-V}. Moreover, we can show $u^{\eps,\delta}(t,x)\to u(t,x)$ in $\mathbb D^{1,2}$, for which it suffices to show noting that the Malliavin derivative $D$ is closable,
\begin{equation}\label{e:conv-Du}
\lim\limits_{\varepsilon,\delta, \eps',\delta'\to0}\E\big[\|D u^{\varepsilon,\delta}(t,x)-Du^{\varepsilon',\delta'}(t,x)\|^2_{\mathcal H}\big]=0,
\end{equation}
where the Malliavin derivative is taken with respect to $\dot W$.  For simplicity of expressions, we assume $u_0(x)\equiv1$ throughout the rest of the proof. 
Noting that 
\begin{equation}\label{e:Dued}
    D u^{\eps,\delta}(t,x)= \E_X\left[\exp\left(W(A_{t,x}^{\eps,\delta})\right) A_{t,x}^{\eps,\delta} \right], 
    \end{equation}
we have
\begin{align*}
\E\la     D u^{\eps,\delta}(t,x), D u^{\eps',\delta'}(t,x)\ra_\mathcal H=\E\left[ \exp\left(W(A_{t,x}^{\eps,\delta}+\tilde A_{t,x}^{\eps',\delta'})\right) \la A_{t,x}^{\eps,\delta}, \tilde A_{t,x}^{\eps',\delta'}\ra_{\mathcal H}\right], 
\end{align*}
where we recall that $A_{t,x}^{\eps,\delta}(r,y)$ given in \eqref{A} is an approximation of $\delta(X_{t-r}^x-y)$ and $\tilde A_{t,x}^{\eps',\delta'}$ is obtained from replacing $X$ by its independent copy $\tilde X$ in $A_{t,x}^{\eps',\delta'}$.
% {\red Thus, 
% \begin{align*}
% &\E\la     D u^{\eps,\delta}(t,x), D u^{\eps',\delta'}(t,x)\ra_\mathcal H \\
% &=\E\left[ \exp\left(W(A_{t,x}^{\eps,\delta}+\tilde A_{t,x}^{\eps',\delta'})\right)\int_0^t\int_0^t \int_{\R^{2d}}A_{t,x}^{\eps,\delta}(r_1,y_1)\tilde A_{t,x}^{\eps',\delta'}(r_2,y_2)|r_1-r_2|^{-\beta_0}\gamma(y_1-y_2) d{\bf y}d{\bf r} \right] \\&=\E\Bigg[ \exp\left(W(A_{t,x}^{\eps,\delta}+\tilde A_{t,x}^{\eps',\delta'})\right)
%  \int_{[0,t]^4} \int_{\R^{2d}}\varphi_\delta(t-s_1-r_1) \varphi_{\delta'}(t-s_2-r_2) \\
%  &\qquad \qquad \qquad \qquad \qquad \qquad \qquad \times q_\eps(X_{s_1}^x-y_1) q_{\eps'}(\tilde X_{s_2}^x-y_2)  |r_1-r_2|^{-\beta_0}\gamma(y_1-y_2) d{\bf y}d{\bf r}d{\bf s} \Bigg].
%  \end{align*}}
 {
Fubini's theorem yields
\[
\begin{aligned}
&\E\left[
\exp\left(W(A_{t,x}^{\eps,\delta}
+\tilde A_{t,x}^{\eps',\delta'})\right)
\langle A_{t,x}^{\eps,\delta},
\tilde A_{t,x}^{\eps',\delta'}\rangle_{\mathcal H}
\right]  \\
&=
\E_{X,\tilde X}\left[
\exp\left(\frac12
\|A_{t,x}^{\eps,\delta}
+\tilde A_{t,x}^{\eps',\delta'}\|_{\mathcal H}^2\right)
\langle A_{t,x}^{\eps,\delta},
\tilde A_{t,x}^{\eps',\delta'}\rangle_{\mathcal H}
\right].
\end{aligned}
\]
By the same  argument as in Theorem~\ref{convergence}, we can show 
\[
\langle A_{t,x}^{\eps,\delta},
\tilde A_{t,x}^{\eps',\delta'}\rangle_{\mathcal H}
\to
\int_0^t\int_0^t
|s-r|^{-\beta_0}\gamma(X_s^x-\tilde X_r^x)\,dr\,ds
\]
in $L^1$ as $\eps,\delta,\eps',\delta'\to0$.  Hence, the following $L^1$-convergence holds:
\[
\begin{aligned}
\|A_{t,x}^{\eps,\delta}
+\tilde A_{t,x}^{\eps',\delta'}\|_{\mathcal H}^2 
&=
\|A_{t,x}^{\eps,\delta}\|_{\mathcal H}^2
+
\|\tilde A_{t,x}^{\eps',\delta'}\|_{\mathcal H}^2
+
2\langle A_{t,x}^{\eps,\delta},
\tilde A_{t,x}^{\eps',\delta'}\rangle_{\mathcal H} \\
&\to
\int_0^t\int_0^t |s-r|^{-\beta_0}
\left[
\gamma(X_s^x-X_r^x)
+\gamma(\tilde X_s^x-\tilde X_r^x)
+2\gamma(X_s^x-\tilde X_r^x)
\right]\,dr\,ds. 
\end{aligned}
\] Furthermore, the exponential integrability obtained in
Theorem~\ref{thm:exp} gives the required uniform integrability. Therefore, we can pass to the limit and obtain
\[
\begin{aligned}
&\E\left[
\exp\left(W(A_{t,x}^{\eps,\delta}
+\tilde A_{t,x}^{\eps',\delta'})\right)
\langle A_{t,x}^{\eps,\delta},
\tilde A_{t,x}^{\eps',\delta'}\rangle_{\mathcal H}
\right] \\
&\to
\E\Bigg[
\exp\left(
\frac12\int_0^t\int_0^t |s-r|^{-\beta_0}
\left[
\gamma(X_s^x-X_r^x)
+\gamma(\tilde X_s^x-\tilde X_r^x)
+2\gamma(X_s^x-\tilde X_r^x)
\right]\,dr\,ds
\right) \\
&\qquad\qquad \qquad \times
\int_0^t\int_0^t
|s-r|^{-\beta_0}\gamma(X_s^x-\tilde X_r^x)\,dr\,ds
\Bigg]<\infty.
\end{aligned}
\]
 }

% {\red By a similar argument used in the proof of { Theorem \ref{thm:exp}}, one can show that as $\eps, \delta, \eps',\delta'$ go to zero,  the above term converges to 
% \begin{align*}
%  \E\Bigg [&\exp\left( \frac12\int_0^t\int_0^t  |s-r|^{-\beta_0} \left(\gamma(X_s^x-X_r^x)+\gamma(\tilde X_s^x-\tilde X_r^x)+ 2\gamma(X_s^x-\tilde X_r^x)\right)drds\right) \\
%  & \quad \times \int_0^t\int_0^t |s-r|^{-\beta_0}\gamma(X_s^x-\tilde X_r^x)drds\Bigg]<\infty.
%  \end{align*}}
 
 This proves \eqref{e:conv-Du}, and consequently we have
 \begin{equation}\label{e:Du}
     Du(t,x)= \E_X\left[ \exp\left( W(\boldsymbol{\delta}(X_{t-\cdot}^x-\cdot))\right)\boldsymbol{\delta}(X_{t-\cdot}^x-\cdot)\right].
 \end{equation}

{\bf Step 2.}  In this step, we prove the convergence of the right-hand side. Noting Definition \ref{Stratonovich}, it suffices to prove
\[I^{\eps,\delta}:=\int_0^t\int_{\R^d} p_{t-s}^{(x)} (y) v^{\eps, \delta}_{s,y} \dot W^{\eps,\delta}(s,y) dyds\] converges to 0 in $L^1(\Omega)$ as $(\eps,\delta)$ tends to zero, where $v^{\eps,\delta}_{s,y}= u^{\eps,\delta}(s,y)-u(s,y)$.  Applying \eqref{formula} to $v^{\eps,\delta}_{s,y}\dot W^{\eps,\delta}(s,y) =v^{\eps,\delta}_{s,y}W(\phi_{s,y}^{\eps,\delta})$ where \[\phi_{s,y}^{\eps,\delta}(r,z)=\varphi_\delta(s-r)q_\eps(y-z),\] we have
\begin{equation}\label{e:Ied}
\begin{aligned}
I^{\eps,\delta}&= \int_0^t\int_{\R^d} p^{(x)}_{t-s}(y)\left[\boldsymbol{\delta}(v^{\eps,\delta}_{s,y} \phi_{s,y}^{\eps,\delta}(\cdot,\cdot))+ \la D v^{\eps,\delta}_{s,y}, \phi_{s,y}^{\eps,\delta}\ra_{\mathcal H} \right]dyds \\
&=\boldsymbol{\delta}\left( \int_0^t \int_{\R^d} p_{t-s}^{(x)}(y) v_{s,y}^{\eps,\delta} \phi_{s,y}^{\eps,\delta}(\cdot,\cdot) dyds\right)+ \int_0^t\int_{\R^d} p^{(x)}_{t-s}(y)\la D v^{\eps,\delta}_{s,y}, \phi_{s,y}^{\eps,\delta}\ra_{\mathcal H}dyds\\
&=:I_1^{\eps,\delta}+I_2^{\eps,\delta}.
\end{aligned}
\end{equation}
Denote 
\[ \Phi_{t,x}^{\eps,\delta} (r,z) := \int_0^t \int_{\R^d} p_{t-s}^{(x)}(y) v_{s,y}^{\eps,\delta} \phi_{s,y}^{\eps,\delta}(r,z) dyds. \]
To deal with the first term $I_1^{\eps,\delta}=\boldsymbol{\delta}(\Phi_{t,x}^{\eps,\delta})$ of $I^{\eps,\delta}$, by \eqref{e:formula1}  it suffices to show 
\[\lim_{\eps,\delta\to0} \E\big[\|\Phi_{t,x}^{\eps,\delta}\|_{\mathcal H}^2\big] +  \E\big[\|D \Phi_{t,x}^{\eps,\delta}\|_{\mathcal H^{\otimes 2}}^2\big]=0.\]

Note that
\begin{equation}\label{e:Phi-norm}
\E\big[\|\Phi_{t,x}^{\eps,\delta}\|_{\mathcal H}^2\big] =\int_0^t\int_0^t\int_{\R^{2d}} p_{t-s_1}^{(x)}(y_1)p^{(x)}_{t-s_2}(y_2) \E\left[ v_{s_1,y_1}^{\eps,\delta}v_{s_2,y_2}^{\eps,\delta}\right]\la \phi_{s_1,y_1}^{\eps,\delta}, \phi_{s_2,y_2}^{\eps,\delta} \ra_{\mathcal H} d{\bf y} d{\bf s}.
\end{equation}
By Step 1, we know that $\E\big[ v_{s_1,y_1}^{\eps,\delta}v_{s_2,y_2}^{\eps,\delta}\big]$ is uniformly bounded and converges to 0 as $\eps,\delta$ go to zero. For the  integral without $\E\big[ v_{s_1,y_1}^{\eps,\delta}v_{s_2,y_2}^{\eps,\delta}\big]$, noting that
\begin{align*}
   \la \phi_{s_1,y_1}^{\eps,\delta}, \phi_{s_2,y_2}^{\eps,\delta} \ra_{\mathcal H}    &=\int_0^t\int_0^t\int_{\R^{2d}} \phi_{s_1,y_1}^{\eps,\delta}(r_1,z_1)\phi_{s_2,y_2}^{\eps,\delta} (r_2,z_2)|r_1-r_2|^{-\beta_0} \gamma(z_1-z_2)
 d{\bf z} d{\bf r}\\
 &=\int_0^t\int_0^t\int_{\R^d} \varphi_\delta(s_1-r_1) \varphi_\delta(s_2-r_2) |\hat q_\eps(\eta)|^2 e^{\iota \eta\cdot(y_1-y_2)} |r_1-r_2|^{-\beta_0} \mu(d\eta) d{\bf r},
\end{align*}
 we have
\begin{align*}
   &\int_0^t\int_0^t\int_{\R^{2d}} p^{(x)}_{t-s_1}(y_1)p^{(x)}_{t-s_2}(y_2) \la \phi_{s_1,y_1}^{\eps,\delta}, \phi_{s_2,y_2}^{\eps,\delta} \ra_{\mathcal H} d{\bf y} d{\bf s} \\
   &\le c_0^2 \int_0^t\int_0^t\int_{\R^{2d}} P_{t-s_1}(y_1-x)P_{t-s_2}(y_2-x) \la \phi_{s_1,y_1}^{\eps,\delta}, \phi_{s_2,y_2}^{\eps,\delta} \ra_{\mathcal H} d{\bf y} d{\bf s} \\
   &\le C \int_{[0,t]^4}\int_{\R^{d}} \varphi_\delta(s_1-r_1) \varphi_\delta(s_2-r_2) |\hat q_\eps(\eta)|^2 e^{-c_2[(t-s_1)+(t-s_2)]\Psi(\eta)} |r_1-r_2|^{-\beta_0} 
   \mu(d\eta)  d{\bf r}d{\bf s}\\
   &\le C \int_0^t\int_0^t\int_{\R^d}|s_1-s_2|^{-\beta_0}  e^{-c_2(s_1+s_2)\Psi(\eta)}\mu(d\eta) d{\bf s}\\
   &{\le 2C \int_{\R^d} \int_0^t \int_0^r s^{-\beta_0} e^{-c_2 s\Psi(\eta)} ds dr \mu (d\eta)},
   \end{align*}
%{\red which is finite due to \cite[Proposition 3.2]{Song17}. } 
{  the right-hand side of which is finite by Fubini's theorem, Lemma~\ref{lem:time-singular-integral} and condition \eqref{dalang-stra}.} 
Then, by dominated convergence theorem, we have 
$\lim_{\eps,\delta\to0} \E\big[\|\Phi_{t,x}^{\eps,\delta}\|^2_{\mathcal H}\big]=0.$ Similar to \eqref{e:Phi-norm}, we have
\begin{align*}
\E\big[\|D\Phi_{t,x}^{\eps,\delta}\|^2_{\mathcal H^{\otimes 2}}\big] =\int_0^t\int_0^t\int_{\R^{2d}} p_{t-s_1}^{(x)}(y_1)p^{(x)}_{t-s_2}(y_2) \E\left[\la D v_{s_1,y_1}^{\eps,\delta}, Dv_{s_2,y_2}^{\eps,\delta}\ra_{\mathcal H}\right]\la \phi_{s_1,y_1}^{\eps,\delta}, \phi_{s_2,y_2}^{\eps,\delta} \ra_{\mathcal H} d{\bf y} d{\bf s},
\end{align*}
which converges to 0 as $\eps,\delta$ tend to zero, noting that $\E\big[\big< D v_{s_1,y_1}^{\eps,\delta}, Dv_{s_2,y_2}^{\eps,\delta}\big>_{\mathcal H}\big]$ is uniformly bounded and tends to 0 as $(\eps,\delta)\to 0$ by Step 1. This implies that the first term $I_1^{\eps,\delta}$ in \eqref{e:Ied} converges to 0 in $L^2$. 

To prove the second term $I_2^{\eps,\delta}$ in \eqref{e:Ied} converges to 0 in $L^1$, it suffices to show that both 
$\int_0^t\int_{\R^d} p^{(x)}_{t-s}(y)\big< D u^{\eps,\delta}(s,y), \phi_{s,y}^{\eps,\delta}\big>_{\mathcal H}dyds$ and $\int_0^t\int_{\R^d} p^{(x)}_{t-s}(y)\big< D u(s,y), \phi_{s,y}^{\eps,\delta}\big>_{\mathcal H}dyds$ converge to the same limit in $L^1$ (note that  $\|\phi_{s,y}^{\eps, \delta}\|_{\mathcal H}\to\infty$ as $(\eps, \delta)\to 0$). Note that by \eqref{e:Dued} we have
\begin{align*}
 & \la D u^{\eps,\delta}(s,y), \phi_{s,y}^{\eps,\delta}\ra_{\mathcal H} \\
    =& \int_{[0,s]^3} \int_{\R^{2d}} \varphi_\delta(s-\tau-r_1) \E_X \left[
 \exp\left(W(A_{s,y}^{\eps, \delta})\right) q_\eps(X_\tau^y-z_1) \right]\\
 &\qquad\qquad \times \varphi_\delta(s-r_2) q_\eps(y-z_2) |r_1-r_2|^{-\beta_0}\gamma(z_1-z_2)d{\bf z} d\tau d{\bf r}.
\end{align*}
{
As $\eps,\delta\to0$, the mollifiers force
\[
r_1\to s-\tau,\qquad r_2\to s,\qquad
z_1\to X_\tau^y,\qquad z_2\to y.
\]
Moreover, by Theorem~\ref{convergence},
\[
W(A_{s,y}^{\eps,\delta})
\to
W(\boldsymbol{\delta}(X^y_{s-\cdot}-\cdot))
\quad \text{in }L^2(\Omega).
\]
Therefore, together with the uniform exponential moment estimate \eqref{e:p-mom}, we obtain
\[
\la D u^{\eps,\delta}(s,y),\phi_{s,y}^{\eps,\delta}\ra_{\mathcal H}
\to
\int_0^s \tau^{-\beta_0}
\E_X\left[
\exp\left(W(\boldsymbol{\delta}(X^y_{s-\cdot}-\cdot))\right)
\gamma(X_\tau^y-y)
\right]d\tau
\]
in $L^1(\Omega)$. 
}
%{\red By a similar argument used  in the proof of  Theorem \ref{convergence}, one can show that the above term converges in $L^1$ to 
%\[ \int_0^s \tau^{-\beta_0} \E_X 
%\textbf{}\left[\exp\left(\boldsymbol{\delta}(X^y_{s-\cdot}-\cdot)\right)\gamma(X_\tau^x-y)  \right] d\tau,\]
%of which the $L^1$-norm is bounded in $(s,y)$ noting  Lemma \ref{lemma2} and $\sup_{y\in\R^d}\E\big[|u(s,y)|^p\big]<\infty$ for all $p\ge1$ by \eqref{e:p-mom}.}  
This shows that $\int_0^t\int_{\R^d} p^{(x)}_{t-s}(y)\big< D u^{\eps,\delta}(s,y), \phi_{s,y}^{\eps,\delta}\big>_{\mathcal H}dyds$ converges in $L^1$ to \[\E_X \bigg[\int_0^t\int_{\R^d} p_{t-s}^{(x)}(y)\exp\left(W(\boldsymbol{\delta}(X^y_{s-\cdot}-\cdot))\right) \int_0^s \tau^{-\beta_0} \gamma(X_\tau^y-y)  d\tau dy ds\bigg].\] The convergence of $\int_0^t\int_{\R^d} p^{(x)}_{t-s}(y)\big< D u(s,y), \phi_{s,y}^{\eps,\delta}\big>_{\mathcal H}dyds$ to the same limit can be proven in a similar way.  This justifies the $L^1$-convergence of $I_2^{\eps,\delta}$ to zero. 

Combining the convergences of $I_1^{\eps,\delta}$ and $I_2^{\eps,\delta}$, we get that $I^{\eps,\delta}$ converges to 0 in $L^1$. 
\end{proof}

By the Feynman--Kac formula \eqref{u}, it is straightforward to obtain the strict positivity of the Stratonovich solution given some non-degenerate non-negative initial condition. 
\begin{proposition}\label{prop:pos}
    Assume Assumption \ref{H} and the Dalang's condition \eqref{dalang-stra}. We further assume that $u_0(x)$ is a non-negative function satisfying $\mathbb P\{u_0(X_t^x)>0\}>0$. Then the Stratonovich solution given by \eqref{u} is strictly positive a.s. 
\end{proposition}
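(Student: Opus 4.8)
The proof rests on the Feynman--Kac representation \eqref{u} together with the elementary fact that a non-negative random variable which is strictly positive on an event of positive probability has strictly positive expectation. First I would observe that, since $u_0\ge 0$ and $\exp(\cdot)>0$, the integrand
\[Z_{t,x}:=u_0(X_t^x)\exp\Big(\int_0^t\int_{\R^d}\mathfrak{\delta}(X_{t-r}^x-y)\,W(dr,dy)\Big)=u_0(X_t^x)\exp\big(V_{t,x}\big)\]
is non-negative on the product probability space carrying both $X$ and $W$; integrating out $X$ gives $u(t,x)=\E_X[Z_{t,x}]\ge 0$ almost surely, so only the strict inequality requires an argument.

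The key input is that the exponential weight does not degenerate. By Theorem \ref{convergence}, $V_{t,x}$ is the $L^2(\Omega)$-limit of the approximations $V_{t,x}^{\eps,\delta}$ and hence an almost surely finite random variable on the product space; moreover, conditionally on $X$ it is centered Gaussian with variance $\int_0^t\int_0^t|r-s|^{-\beta_0}\gamma(X_r^x-X_s^x)\,dr\,ds$, which is $\mathbb P_X$-almost surely finite under \eqref{dalang-stra} (see Theorem \ref{exp'}, or \cite[Proposition 3.2]{Song17}). Consequently $0<\exp(V_{t,x})<\infty$ almost surely. In particular $\mathbb P_X\otimes\mathbb P_W\{V_{t,x}\notin\R\}=0$, so by Fubini's theorem, for $\mathbb P_W$-almost every noise configuration $\omega_W$ the $X$-random variable $\omega_X\mapsto V_{t,x}(\omega_X,\omega_W)$ is $\mathbb P_X$-almost surely finite.

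Fix such an $\omega_W$. On the event $\{u_0(X_t^x)>0\}$, which has strictly positive $\mathbb P_X$-probability by hypothesis, the map $\omega_X\mapsto Z_{t,x}(\omega_X,\omega_W)$ is the product of a strictly positive number and a strictly positive, finite exponential, hence strictly positive; thus $\mathbb P_X\{Z_{t,x}(\cdot,\omega_W)>1/n\}>0$ for $n$ large enough. Therefore
\[u(t,x)(\omega_W)=\E_X\big[Z_{t,x}(\cdot,\omega_W)\big]\ \ge\ \tfrac1n\,\mathbb P_X\{Z_{t,x}(\cdot,\omega_W)>\tfrac1n\}\ >\ 0,\]
and since this holds for $\mathbb P_W$-almost every $\omega_W$, we conclude $u(t,x)>0$ almost surely.

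There is essentially no deep obstacle here; the only point that requires care is the non-degeneracy of the exponential weight $\exp(V_{t,x})$, i.e.\ the almost sure finiteness of the Hamiltonian (equivalently of the It\^o--Stratonovich correction term), which is precisely what Theorems \ref{convergence} and \ref{exp'} provide under \eqref{dalang-stra}. If one wishes strict positivity to hold simultaneously for all $(t,x)$, it suffices to combine the pointwise statement above with a continuous modification of $u$, furnished by the H\"older regularity established in Section \ref{sec:holder1}.
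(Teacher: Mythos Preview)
Your proof is correct and follows exactly the approach the paper has in mind: the paper does not give a detailed argument but simply states that strict positivity is ``straightforward'' from the Feynman--Kac formula \eqref{u}, and your write-up supplies precisely those details (non-negativity of the integrand, a.s.\ finiteness of $V_{t,x}$ via Theorem \ref{convergence}, and the Fubini-type reduction to a fixed noise realisation). The additional remarks on conditional Gaussianity and on simultaneous positivity via H\"older continuity go slightly beyond what is needed but are correct.
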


Another direct application of \eqref{u} yields the Feynman--Kac formula for $p$th moments of the solution to~\eqref{spde}. 
\begin{proposition}
    	Assume Assumption \ref{H} and condition \eqref{dalang-stra}.  Then, we have for $p\in\mathbb N_+$, 
     \begin{align*}
		\E[|u(t,x)|^p]=\E\Bigg[\prod_{i=1}^p u_0(X_t^{i,x})\exp\bigg(\frac12\sum_{1\le i,j\le p} \int_0^t\int_0^t  |s-r|^{-\beta_0} \gamma(X_r^{x,(i)}-X_s^{x,(j)})drds\bigg)\Bigg],
	\end{align*}
 where $X^{x,(i)}, i=1, \dots, p$ are independent copies of $X^x$.
\end{proposition}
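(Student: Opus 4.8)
The plan is to read off the $p$th moment directly from the Feynman--Kac representation \eqref{u}. Since $u_0$ is bounded and, by Step~1 of the proof of Theorem~\ref{thm:FK}, $\E[|u(t,x)|^p]<\infty$, I would introduce $p$ mutually independent copies $X^{x,(1)},\dots,X^{x,(p)}$ of the Feller process $X^x$, all independent of the noise $W$, and write the $p$th power of the conditional expectation in \eqref{u} as a single expectation over the product of the $p$ path spaces:
\[
u(t,x)^p=\E\!\left[\prod_{i=1}^p u_0\big(X_t^{x,(i)}\big)\exp\Big(\sum_{i=1}^p V_{t,x}^{(i)}\Big)\right],
\]
where $V_{t,x}^{(i)}$ denotes the Hamiltonian \eqref{V_{t,x}} built from the $i$th copy and the expectation is over the $X^{x,(i)}$'s. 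Taking $\E_W$ of both sides and interchanging it with the expectation over the copies by Fubini (justified by the integrability just recalled and the exponential integrability of Theorem~\ref{thm:exp}), the problem reduces to computing $\E_W\big[\exp\big(\sum_{i=1}^p V_{t,x}^{(i)}\big)\big]$ conditionally on the paths.

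The second step is this conditional Gaussian computation. By Theorem~\ref{convergence} each $V_{t,x}^{(i)}$ is the $L^2(\Omega)$-limit of the Wiener integrals $W(A_{t,x}^{\eps,\delta,(i)})$, so conditionally on $(X^{x,(i)})_{i=1}^p$ the vector $(V_{t,x}^{(1)},\dots,V_{t,x}^{(p)})$ is jointly centered Gaussian; hence $\sum_i V_{t,x}^{(i)}$ is a centered Gaussian variable and $\E_W[\exp(\sum_i V_{t,x}^{(i)})]=\exp(\tfrac12\sum_{i,j}\E_W[V_{t,x}^{(i)}V_{t,x}^{(j)}])$. The diagonal covariances are given by \eqref{cov}; the off-diagonal ones are obtained by repeating verbatim the Markov-property-plus-Parseval computation in the proof of Theorem~\ref{convergence}, this time for two independent copies, and equal $\int_0^t\int_0^t|r-s|^{-\beta_0}\gamma(X_r^{x,(i)}-X_s^{x,(j)})\,dr\,ds$ (using that $|r-s|^{-\beta_0}$ is invariant under $(r,s)\mapsto(t-r,t-s)$). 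Substituting back yields the stated identity.

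The main point to be careful about is the justification of the interchanges of limit and expectation, and I would rather handle this through the approximations than through Fubini at the limiting level. Concretely, I would first prove the moment identity for the smooth approximation $u^{\eps,\delta}$ of \eqref{e:u-ed}, where $\E_W[\exp(W(\sum_i A_{t,x}^{\eps,\delta,(i)}))]=\exp(\tfrac12\|\sum_i A_{t,x}^{\eps,\delta,(i)}\|_{\mathcal H}^2)=\exp(\tfrac12\sum_{i,j}\langle A_{t,x}^{\eps,\delta,(i)},A_{t,x}^{\eps,\delta,(j)}\rangle_{\mathcal H})$ is just the Laplace transform of a genuine finite-variance Gaussian, and then pass to the limit $(\eps,\delta)\to0$: the left-hand side converges by the $L^p$-convergence $u^{\eps,\delta}(t,x)\to u(t,x)$ from Step~1 of the proof of Theorem~\ref{thm:FK}, the inner products $\langle A_{t,x}^{\eps,\delta,(i)},A_{t,x}^{\eps,\delta,(j)}\rangle_{\mathcal H}$ converge by Theorem~\ref{convergence}, and to turn convergence in probability of the right-hand side into $L^1$-convergence I would invoke the uniform-in-$(\eps,\delta)$ exponential bound \eqref{e:exp-V}, whose proof applies verbatim to $\sum_i V_{t,x}^{\eps,\delta,(i)}$. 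The finiteness of the limiting right-hand side, needed at the very end, is a multi-copy version of Theorem~\ref{exp'} proved by the same Taylor expansion. I expect this uniform-integrability bookkeeping to be the only real work; the algebra itself is immediate.
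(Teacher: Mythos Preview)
Your approach is correct and is precisely what the paper intends: it states the proposition as ``another direct application of \eqref{u}'' without writing out a proof, and the Gaussian computation plus the approximation/uniform-integrability bookkeeping you describe is exactly the content of that direct application. One cosmetic point: you compute $\E[u(t,x)^p]$ rather than $\E[|u(t,x)|^p]$, which matches the right-hand side of the proposition (no absolute values on $u_0$) and is what the argument actually yields; the absolute value in the statement is harmless for even $p$ or nonnegative $u_0$ but is otherwise a slight abuse already present in the paper.
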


\begin{lemma}\label{lemma2}
	Assume Assumption  \ref{H} and condition \eqref{dalang-stra}. Then, we  have \begin{align*}
		\sup_{x,y\in\R^d}\E\left(\int_0^ts^{-\beta_0} \gamma(X_s^x-y)ds\right)^2<\infty.
	\end{align*}
 \end{lemma}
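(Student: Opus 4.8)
The plan is to expand the square, use the Markov property of $X$ to bound the two-point function $\E[\gamma(X_s^x-y)\gamma(X_r^x-y)]$ by a quantity involving $\Psi$, and then verify the resulting deterministic double integral is finite using Dalang's condition \eqref{dalang-stra}. First I would write
\[
\E\left[\left(\int_0^t s^{-\beta_0}\gamma(X_s^x-y)\,ds\right)^2\right] = 2\int_0^t\int_0^s (rs)^{-\beta_0}\, \E\left[\gamma(X_r^x-y)\gamma(X_s^x-y)\right]\,dr\,ds,
\]
so the task reduces to controlling the expectation inside, uniformly in $x$ and $y$.

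The key step is the Markov-property estimate. For $r<s$, conditioning on $\mathcal F_r$ and using Assumption~\ref{H} together with the Parseval--Plancherel identity (exactly as in the proof of Theorem~\ref{exp'}), I would bound
\[
\E\left[\gamma(X_s^x-y)\,\big|\,\mathcal F_r\right] = \int_{\R^d} p_{s-r}^{(X_r^x)}(u)\,\gamma(u-y)\,du \le c_0 c_1\int_{\R^d} e^{-c_2(s-r)\Psi(\xi)}\hat\gamma(\xi)\, e^{-\iota\xi\cdot(X_r^x-y)}\,d\xi,
\]
whose modulus is at most $C\int_{\R^d} e^{-c_2(s-r)\Psi(\xi)}\mu(d\xi)$, a deterministic quantity free of $X_r^x$ and $y$. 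Taking the outer expectation against the remaining factor $\gamma(X_r^x-y)$ and then applying the same one-point bound $\E[\gamma(X_r^x-y)]\le C\int_{\R^d} e^{-c_2 r\Psi(\eta)}\mu(d\eta)$ gives
\[
\E\left[\gamma(X_r^x-y)\gamma(X_s^x-y)\right] \le C^2\int_{\R^{2d}} e^{-c_2 r\Psi(\eta)}\,e^{-c_2(s-r)\Psi(\xi)}\,\mu(d\eta)\,\mu(d\xi),
\]
uniformly in $x,y$.

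Plugging this back, the problem becomes showing
\[
\int_0^t\int_0^s (rs)^{-\beta_0}\int_{\R^{2d}} e^{-c_2 r\Psi(\eta)}\,e^{-c_2(s-r)\Psi(\xi)}\,\mu(d\eta)\,\mu(d\xi)\,dr\,ds < \infty.
\]
Substituting $u=r$, $v=s-r$ decouples the time integrals up to the $s^{-\beta_0}=(u+v)^{-\beta_0}$ factor, which I would simply bound by $\min(u,v)^{-\beta_0}\le u^{-\beta_0/2}v^{-\beta_0/2}$ (or absorb crudely, since $\beta_0<1$ and the domain is bounded), reducing to a product of two integrals each of the form $\int_0^t w^{-\beta_0}\int_{\R^d} e^{-c_2 w\Psi(\xi)}\mu(d\xi)\,dw$; after inserting a regularizing $e^{-c_2 w M}$ and integrating in $w$ this is controlled by $\int_{\R^d}\big(M+\Psi(\xi)\big)^{-(1-\beta_0)}\mu(d\xi)<\infty$, which is precisely \eqref{dalang-stra}. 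This is essentially \cite[Proposition 3.2]{Song17}, invoked repeatedly in the excerpt, so I would cite it rather than redo the computation.

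The main obstacle is bookkeeping rather than conceptual: one must make sure the $(u+v)^{-\beta_0}$ singularity is split cleanly so the two frequency integrals genuinely separate, and that the Markov-property bound on the two-point function does not lose uniformity in $y$ — which it does not, since the phase $e^{-\iota\xi\cdot(X_r^x-y)}$ only contributes a modulus-one factor. Everything else follows the template already established in Theorems~\ref{exp'} and \ref{thm:exp}.
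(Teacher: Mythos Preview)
Your proposal is correct and follows essentially the same route as the paper: expand the square, use the Markov property with Assumption~\ref{H} to get $\E[\gamma(X_r^x-y)\gamma(X_s^x-y)]\le C^2\int_{\R^d} e^{-c_2 r\Psi(\eta)}\mu(d\eta)\int_{\R^d} e^{-c_2(s-r)\Psi(\xi)}\mu(d\xi)$ uniformly in $x,y$, change variables to $(u,v)=(r,s-r)$, and reduce to the square of $\int_0^t w^{-\beta_0}\int_{\R^d} e^{-c_2 w\Psi(\xi)}\mu(d\xi)\,dw$, which is finite by \cite[Lemma~3.7]{Song17} and \eqref{dalang-stra}. One small correction: the inequality $\min(u,v)^{-\beta_0}\le u^{-\beta_0/2}v^{-\beta_0/2}$ actually goes the wrong way (since $\min(u,v)^2\le uv$); the paper simply uses $(u+v)^{-\beta_0}\le v^{-\beta_0}$, which gives $u^{-\beta_0}(u+v)^{-\beta_0}\le u^{-\beta_0}v^{-\beta_0}$ directly and factors the double integral as you want.
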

 
\begin{proof}
	Note that
	\begin{align*}
	\E\left(\int_0^ts^{-\beta_0}\gamma(X_s^x-y)ds\right)^2=2\E\int_0^t\int_0^s(sr)^{-\beta_0} \gamma(X_r^x-y)\gamma(X_s^x-y) drds.
	\end{align*}
By Assumption \ref{H} and the Markov property of $X$, we have for $0<r<s<t$, 
\begin{align*}
\E\left[ \gamma(X_r^x-y)\gamma(X_s^x-y) \right]	&=\E\left[\gamma(X_r^x-y)\int_{\R^d}p_{s-r}^{(X_r^x)}(z) \gamma(z-y)dz\right]\\
&\le C \E\left[\gamma(X_r^x-y)\int_{\R^d} \exp\left(-c_2(s-r)\Psi(\xi)\right) \mu(d\xi)\right]\\
&\le C^2 \int_{\R^d} \exp\left(-c_2 r \Psi(\xi)\right) \mu(d\xi) \int_{\R^d} \exp\left(-c_2(s-r)\Psi(\xi)\right) \mu(d\xi).
\end{align*}
Thus,  
\begin{align*}
    &\E\int_0^t\int_0^s(sr)^{-\beta_0} \gamma(X_r^x-y)\gamma(X_s^x-y) drds \\
    &\le C \int_0^t\int_0^s\int_{\R^{d}}  (sr)^{- \beta_0}  \exp\left(-c_2 r \Psi(\xi_1)\right)  \int_{\R^d} \exp\left(-c_2(s-r)\Psi(\xi_2)\right)  \mu(d\xi_2)\mu(d\xi_1)drds \\
    &= C \int_{\left\{\substack{u_1,u_2>0\\0<u_1+u_2<t}\right\}}\int_{\R^{d}} (u_1(u_1+u_2))^{- \beta_0}  \exp\left(-c_2 u_1 \Psi(\xi_1)\right)  \int_{\R^d} \exp\left(-c_2u_2\Psi(\xi_2)\right) \mu(d\xi_2) \mu(d\xi_1) du_1 du_2 \\
    &\le C \left(\int_0^t\int_{\R^d } s^{-\beta_0} \exp\left(-c_2 s\Psi(\xi)\right)   \mu(d\xi) ds\right)^2,
\end{align*}
{%\blue
which is finite due to Lemma~\ref{lem:time-singular-integral} and condition \eqref{dalang-stra}.}
\end{proof}

\subsection{H\"older continuity}\label{sec:holder1}

In this subsection, we will study the H\"older continuity for the Stratonovich solution $u(t,x)$ of \eqref{spde}.  In the following, we use the convention $p_t(x):=p_t^{(0)}(x)$. 

\begin{theorem}
 Assume   $p_t^{(x)}(y)=p_t(y-x)$. Furthermore, we assume there exist $\theta_1,\theta_2\in(0,1]$ and $C>0$ such that for all $T\in(0,\infty)$,    \begin{equation}\label{e:Holder1}
 \int_0^T \int_0^T \int_{\R^d}|r-s|^{-\beta_0} p_{|r-s|}(y) \Big[\gamma(y) -\gamma(y+z)\Big]dydr ds \le C |z|^{2\theta_1},
    \end{equation}
    and for all $h$ in a bounded subset of  $\R$,
    \begin{align}\label{e:Holder2}
\sup_{t\in[0,T]}       \int_0^t\int_0^t|r-s|^{-\beta_0}\int_{\R^d}\big(p_{|r-s|}(y)-p_{|r-s+h|}(y)\big)\gamma(y)dydrds\leq C|h|^{\theta_2}.
    \end{align}
Under Assumption \ref{H}  and condition \eqref{dalang-stra},  the solution $u(t,x)$ given by the Feynman--Kac formula \eqref{e:FK1} is $\kappa_1$-H\"older continuous in $x$ with $\kappa_1\in(0,\theta_1)$ and $\kappa_2$-H\"older continuous in $t$ with $\kappa_2\in(0,\frac{\theta_2}{2})$ on any compact set of $[0,\infty)\times \R^d$.  
\end{theorem}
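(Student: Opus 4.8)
The plan is to use the Feynman--Kac representation \eqref{e:FK1} together with the Kolmogorov continuity criterion: it suffices to establish moment bounds of the form $\E[|u(t,x)-u(t,x')|^p]\le C|x-x'|^{p\theta_1}$ and $\E[|u(t,x)-u(t',x)|^p]\le C|t-t'|^{p\theta_2/2}$ for all $p\ge 1$, uniformly over compact subsets of $[0,\infty)\times\R^d$; since these hold for every $p$, one gets H\"older exponents arbitrarily close to $\theta_1$ in space and $\theta_2/2$ in time. So the whole proof reduces to two moment estimates.

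For the spatial increment, write $u(t,x)=\E_X[u_0(X^x_t)\exp(V_{t,x})]$ with $V_{t,x}$ the Hamiltonian from Theorem \ref{convergence}; assume $u_0\equiv 1$ for clarity (the bounded-$u_0$ case is handled by adding a term controlled by $\E[|u_0(X^x_t)-u_0(X^{x'}_t)|]$-type bounds if $u_0$ is H\"older, or just by the fact that the exponential factors dominate). Using $|e^a-e^b|\le(e^a+e^b)|a-b|$ and H\"older's inequality in the $X$-expectation, one reduces $\E_W\E_X[|u(t,x)-u(t,x')|^p]$ to a product of a uniformly bounded exponential-moment factor (finite by Theorem \ref{thm:exp}) and a factor of the form $\left(\E_W\E_X\big[|V_{t,x}-V_{t,x'}|^{2p}\big]\right)^{1/2}$. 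Conditionally on $X$, $V_{t,x}-V_{t,x'}$ is Gaussian with conditional variance equal to a double integral of $|r-s|^{-\beta_0}$ against $\big[\gamma(X^x_r-X^x_s)-\gamma(X^x_r-X^{x'}_s)-\gamma(X^{x'}_r-X^x_s)+\gamma(X^{x'}_r-X^{x'}_s)\big]$; using the translation structure $X^x_t=x+X^0_t$ (which holds here since $p^{(x)}_t(y)=p_t(y-x)$), this is exactly a second difference of $\gamma$ evaluated at $X^0_r-X^0_s$ and shifted by $x-x'$. Taking $\E_X$, inserting the transition-density bound from Assumption \ref{H}, and using the Parseval identity converts the resulting expression into $\int_0^t\int_0^t\int_{\R^d}|r-s|^{-\beta_0}p_{|r-s|}(y)[\gamma(y)-\gamma(y+z)]\,dy\,dr\,ds$ with $z=x-x'$, which is $\le C|z|^{2\theta_1}$ by hypothesis \eqref{e:Holder1}. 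Since $\|V_{t,x}-V_{t,x'}\|_{L^{2p}(\Omega)}\le C_p\|V_{t,x}-V_{t,x'}\|_{L^2(\Omega)}$ by Gaussian hypercontractivity (conditionally, $V$ lives in the first Wiener chaos), this yields the desired $|x-x'|^{p\theta_1}$ bound.

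For the temporal increment the argument is parallel but requires splitting $u(t',x)-u(t,x)$, $t<t'$, into the change coming from the terminal time of the path and the change in the Hamiltonian. Concretely, using the Markov property one writes $u(t',x)=\E_X[\exp(V_{t',x})]$ and compares with $\E_X[\exp(V_{t,x})]$; the difference $V_{t',x}-V_{t,x}$ has conditional variance that, after $\E_X$ and Assumption \ref{H}, is governed by $\int\int|r-s|^{-\beta_0}\big(p_{|r-s|}(y)-p_{|r-s+h|}(y)\big)\gamma(y)\,dy\,dr\,ds$ with $h=t'-t$, plus a boundary contribution from the strip $[t,t']$ in the time integral; the latter is an integral of $|r-s|^{-\beta_0}p_{|r-s|}(y)\gamma(y)$ over a region of time-measure $O(h)$, which is $O(h^{1-\beta_0})$ by Dalang's condition \eqref{dalang-stra}, and the former is $\le C|h|^{\theta_2}$ by \eqref{e:Holder2}. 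One must also control $\E_X[|u_0(X^x_{t'})-u_0(X^x_t)|\cdot(\text{exp factor})]$, i.e. the continuity coming from running the Feller semigroup for an extra time $h$; with $u_0$ merely bounded this is where one uses that $p_t$ is continuous in $t$ as an operator, and with $u_0\equiv 1$ it vanishes. Combining, $\E[|u(t',x)-u(t,x)|^p]\le C|h|^{p\theta_2/2}$ (the exponent halves again because the Gaussian $L^p$-norm scales like the square root of the variance).

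The main obstacle is the bookkeeping in the temporal estimate: unlike the spatial case, the terminal time of the path changes, so one cannot directly write $V_{t',x}-V_{t,x}$ as a clean second difference, and one has to carefully separate the "interior" contribution (where \eqref{e:Holder2} applies) from the "boundary strip" $s,r\in[t,t']$ contribution, making sure the cross terms between the two are also bounded by a positive power of $h$. Everything else — the reduction via $|e^a-e^b|\le (e^a+e^b)|a-b|$, H\"older in $\E_X$, conditional Gaussian hypercontractivity, and insertion of the Assumption \ref{H} heat-kernel bound followed by Parseval — is by now routine and mirrors the corresponding arguments in \cite{HNS11, Song17}; the genuinely new input is simply that these computations go through with the Feller transition density $p_t$ in place of the Gaussian kernel, which is exactly what hypotheses \eqref{e:Holder1}--\eqref{e:Holder2} encode.
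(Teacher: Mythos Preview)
Your approach is exactly the paper's: reduce via $|e^a-e^b|\le(e^a+e^b)|a-b|$, Cauchy--Schwarz in $\E_X$, Theorem~\ref{thm:exp} for the exponential factor, and Gaussian hypercontractivity to land on $\E|V_{t_1,x_1}-V_{t_2,x_2}|^2$; then the translation identity $X^x_t\overset{d}{=}x+X^0_t$ (from $p_t^{(x)}(y)=p_t(y-x)$) converts this variance into the integrals appearing in \eqref{e:Holder1}--\eqref{e:Holder2}.

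Two small corrections. First, in the spatial step you do not ``insert the transition-density bound from Assumption~\ref{H}'': you use the \emph{exact} density $p_{|r-s|}$, since that is what appears in \eqref{e:Holder1}. Second, your boundary-strip estimate is slightly off. The paper splits $V_{t',x}-V_{t,x}$ as
\[
\int_0^{t}\!\!\int_{\R^d}\big(\mathfrak{\delta}(X^x_{t'-r}-y)-\mathfrak{\delta}(X^x_{t-r}-y)\big)W(dr,dy)
\;+\;\int_{t}^{t'}\!\!\int_{\R^d}\mathfrak{\delta}(X^x_{t'-r}-y)W(dr,dy),
\]
and the second piece has variance $\int_{t}^{t'}\!\int_{t}^{t'}|s_1-s_2|^{-\beta_0}\!\int_{\R^d}p_{|s_1-s_2|}(y)\gamma(y)\,dy\,ds_1ds_2$. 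Using Assumption~\ref{H} and then \cite[Lemma~3.7]{Song17} together with \eqref{dalang-stra}, this is $\le C(t'-t)$, i.e.\ $O(h)$, not $O(h^{1-\beta_0})$. This matters: your exponent would only give time-H\"older index $<\min(\theta_2,1-\beta_0)/2$, which is strictly weaker than the claimed $\theta_2/2$ whenever $\theta_2>1-\beta_0$. With the sharper $O(h)$ bound the rest of your argument goes through verbatim.
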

\begin{remark}
If we assume $\hat p_t(\xi) \sim e^{-t\Psi(\xi)}$,  condition \eqref{e:Holder1} is equivalent to the condition in \cite[Hypothesis S1]{Song17}:
\begin{equation*}
    \int_0^T \int_0^T \int_{\R^d}|r-s|^{-\beta_0} \exp\left(-|r-s|\Psi(\xi)\right)(1-\cos(\xi\cdot z))\mu(d\xi) drds \le C|z|^{2\theta_1}, 
    \end{equation*}
    for which a sufficient condition  is (see \cite[Remark 4.10]{Song17})
\[\int_{\R^d}  \frac{|\xi|^{2\theta_1}}{1+(\Psi(\xi))^{1-\beta_0}}\mu(d\xi) <\infty;
\]
and the condition \eqref{e:Holder2} is a consequence of the condition in \cite[Hypothesis T1]{Song17}:
\begin{align*}
    \int_0^T\int_0^T|r-s|^{-\beta_0}\int_{\R^d}\Big|\exp\left(-|r-s|\Psi(\xi)\right)-\exp\left(-|r-s+h|\Psi(\xi)\right)\Big|\mu(d\xi)drds\leq C|h|^{\theta_2}.
    \end{align*}
%{\red Also make some comments on condition \eqref{e:Holder2}.}
\end{remark}
\begin{remark}
 If we assume  $\E[ e^{\iota \xi \cdot X_t}]= e^{-t\Psi(\xi)}$,  \cite[Theorem 4.11]{Song17} also holds in our setting.
\end{remark}

\begin{proof}
 Recalling \eqref{V_{t,x}}: $V_{t,x}=\int_0^t \int_{\R^d}  \boldsymbol{\delta}(X_{t-s}^x-y) W(ds,dy)$ and using the inequality $|e^a-e^b|\le (e^a+e^b)|a-b|$, we have for all $p\ge 2$, 
\begin{align*}
   & \E[|u(t_1,x_1)-u(t_2,x_2)|^p]=\E_W\left[\left|\E_X\left[\exp(V_{t_1,x_1})-\exp\left(V_{t_2,x_2}\right)\right]\right|^p\right]\\
   &\le \E_W \left[\left(\E_X\left[\exp(2V_{t_1,x_1}) +\exp(2V_{t_2, x_2})\right] \right)^{p/2} \left(\E_X|V_{t_1,x_1}-V_{t_2,x_2}|^2\right)^{p/2} \right]\\
   &\le C \E\left[\exp(pV_{t_1,x_1})+\exp(pV_{t_2,x_2})\right]\left( \E_W \left[\left(\E_X|V_{t_1, x_1}-V_{t_2,x_2}|^2\right)^p \right] \right)^{1/2}\\
   &\le C \left( \E_W \left[\left(\E_X|V_{t_1, x_1}-V_{t_2,x_2}|^2\right)^p \right] \right)^{1/2}
\end{align*}
where the last step follows from Theorem \ref{thm:exp}.  Applying Minkowski's inequality and noting the equivalence  between the $L^p$-norm and $L^2$-norm of Gaussian random variables, we have
\begin{equation}\label{e:Lp-L2}
\begin{aligned}
    \left( \E_W \left[\left(\E_X|V_{t_1, x_1}-V_{t_2,x_2}|^2\right)^p \right] \right)^{1/2}&\le  \left( \E_X \left[\left(\E_W|V_{t_1, x_1}-V_{t_2,x_2}|^{2p}\right)^{1/p} \right] \right)^{p/2}\\
    &\le C_p \left(\E|V_{t_1,x_1}-V_{t_2, x_2}|^2\right)^{p/2}. 
\end{aligned}
\end{equation}
Thus, we have
\begin{align}\label{e:u-u}
     \E[|u(t_1,x_1)-u(t_2,x_2)|^p] \le C \left(\E|V_{t_1,x_1}-V_{t_2, x_2}|^2\right)^{p/2}. 
\end{align}
Let $t_1=t_2$. Note that
\begin{align*}
 \E|V_{t,x_1}-V_{t, x_2}|^2  &= \int_0^t\int_0^t |r-s|^{-\beta_0}\E_X\left[\gamma(X_r^{x_1}-X_s^{x_1})+\gamma(X_r^{x_2}-X_s^{x_2})-2\gamma(X_r^{x_1}-X_s^{x_2})\right]drds\\
 &=2\int_0^t\int_0^t |r-s|^{-\beta_0}\E_X\left[\gamma(X_r-X_s)-\gamma(X_r-X_s+x_1-x_2)\right]drds,
\end{align*}
 where the second equality holds since $(X_r^x)_{r\ge 0}$ has the same distribution of $(X_r+x=X_r^{0}+x)_{r\ge 0}$ due to the condition $p_t^{(x)}(y)=p_t(y-x).$ Hence, 
\begin{align}\label{e:v-v}
 \E|V_{t,x_1}-V_{t, x_2}|^2 
 &=2\int_0^t\int_0^t \int_{\R^d} |r-s|^{-\beta_0}p_{|r-s|}(y)\Big[\gamma(y) -\gamma(y+x_1-x_2)\Big] dydrds.
\end{align}
Thus, by \eqref{e:Lp-L2}, \eqref{e:u-u}, \eqref{e:v-v} and \eqref{e:Holder1}, we have 
\[\E|u(t,x_1)-u(t,x_2)|^p\le C |x_1-x_2|^{\theta_1 p},\]
for all {$p\ge 2$}. The desired spatial H\"older continuity then follows from Kolmogorov's continuity criterion.

Now we consider the H\"older continuity in time. By \eqref{e:u-u} with $x_1=x_2=x$, we have for $0\leq t_1<t_2\leq T$, 
\begin{align*}
&\E\big[|u(t_2,x)-u(t_1,x)|^p]
\leq C(\E\big|V_{t_2,x}-V_{t_1,x}\big|^2)^{\frac{p}{2}}.
\end{align*}
By the definition of $V_{t,x}$, we get
\begin{align}\label{e:V-V}
&\E\big|V_{t_2,x}-V_{t_1,x}\big|^2
\notag\\&=\E\left[\left(\int_0^{t_1}\int_{\R^d}\big(\delta(X_{t_2-r}^x-y)-\delta(X_{t_1-r}^x-y)\big)W(dr,dy)+\int_{t_1}^{t_2}\int_{\R^d}\delta(X_{t_2-r}^x-y)W(dr,dy)\right)^2\right]\notag\\&\leq 2\E\left[\left(\int_0^{t_1}\int_{\R^d}\big(\delta(X_{t_2-r}^x-y)-\delta(X_{t_1-r}^x-y)\big)W(dr,dy)\right)^2\right]\\
&\qquad +\E\left[\left(\int_{t_1}^{t_2}\int_{\R^d}\delta(X_{t_2-r}^x-y)W(dr,dy)\right)^2\right].\notag
\end{align}
For the first term on the right-hand side, we have
\begin{align}\label{e:bound1}
&\E\left[\left(\int_0^{t_1}\int_{\R^d}\big(\delta(X_{t_2-r}^x-y)-\delta(X_{t_1-r}^x-y)\big)W(dr,dy)\right)^2\right]\notag\\&=\int_0^{t_1}\int_0^{t_1}|s_1-s_2|^{-\beta_0}\E_X \big[\gamma(X_{t_2-s_1}-X_{t_2-s_2})+\gamma(X_{t_1-s_1}-X_{t_1-s_2})-2\gamma(X_{t_2-s_1}-X_{t_1-s_2})\big]ds_1ds_2\notag\\&=2\int_0^{t_1}\int_0^{t_1}|s_1-s_2|^{-\beta_0}\E_X\big[\gamma(X_{|s_1-s_2|})-\gamma(X_{|t_2-t_1-s_1+s_2|})\big]ds_1ds_2\notag\\&
=2\int_0^{t_1}\int_0^{t_1}|s_1-s_2|^{-\beta_0}\int_{\R^d}\big(p_{|s_1-s_2|}(y)-p_{|t_2-t_1-s_1+s_2|}(y)\big)\gamma(y)dyds_1ds_2\\&\leq C|t_2-t_1|^{\theta_2},\notag
\end{align}
where the second equality follows from the stationary increment property which is guaranteed by the condition $p_t^{(x)}(y)=p_t(y-x)$ and the last inequality from the condition \eqref{e:Holder2}.

For the second term on the right-hand side of  \eqref{e:V-V}, we have
\begin{align}\label{e:bound2}
&\E\left[\left(\int_{t_1}^{t_2}\int_{\R^d}\delta(X_{t_2-r}^x-y)W(dr,dy)\right)^2\right]\notag\\&=\int_{t_1}^{t_2}\int_{t_1}^{t_2}|s_1-s_2|^{-\beta_0} \E_X\big[\gamma(X_{t_2-s_1}-X_{t_2-s_2})\big]ds_1ds_2\notag\\&=\int_{t_1}^{t_2}\int_{t_1}^{t_2}\int_{\R^d}|s_1-s_2|^{-\beta_0}p_{|s_1-s_2|}(y)\gamma(y)dyds_1ds_2\\&\leq c_0\int_{t_1}^{t_2}\int_{t_1}^{t_2}\int_{\R^d}|s_1-s_2|^{-\beta_0}P_{|s_1-s_2|}(y)\gamma(y)dyds_1ds_2\notag\\&\leq c_0c_1\int_{t_1}^{t_2}\int_{t_1}^{t_2}\int_{\R^d}|s_1-s_2|^{-\beta_0}\exp\left(-c_2|s_1-s_2|\Psi(\xi)\right)\mu(d\xi)ds_1ds_2\notag\\&\leq C(t_2-t_1)\int_{\R^d}\frac{1}{1+(\Psi(\xi))^{1-\beta_0}}\mu(d\xi)\notag\\&\leq C(t_2-t_1),\notag
\end{align}
{%\blue 
where  we  used Lemma~\ref{lem:time-singular-integral} and condition \eqref{dalang-stra} in the last two inequalities, respectively. }Combining \eqref{e:Lp-L2}, \eqref{e:V-V}, \eqref{e:bound1}, and \eqref{e:bound2}, we get 
\begin{align*}
\E\big[|u(t_2,x)-u(t_1,x)|^p]\leq C(|t_2-t_1|^{\theta_2}+|t_2-t_1|)^{\frac{p}{2}}\leq C(t_2-t_1)^{\frac{\theta_2p}{2}}.
\end{align*}
The H\"older continuity in time follows from the Kolmogorov's criterion.
\end{proof}

\subsection{Regularity of the law}\label{sec:law}

In this subsection, we shall prove the existence of the probability density of 
 the Stratonovich solution $u(t,x)$ of \eqref{spde} given by \eqref{e:FK1}, and show that the density is smooth under proper conditions. 

Our main tool for studying the probability law is Malliavin calculus. By  Bouleau–Hirsch’s criterion (see \cite{BH91} or \cite[Theorem 2.1.2]{Nualart}), for a random variable $F\in \mathbb D^{1,2}$, if  $\|DF\|_{\mathcal H}>0$ a.s.,  the law of $F$ is absolutely continuous with respect to the Lebesgue measure, i.e., the law of $F$ has a probability density. Moreover, if $F\in \mathbb D^\infty$ and $\E[\|DF\|_\mathcal H^{-p}]<\infty$ for all $p>0$, then  the law of $F$ has an infinitely differentiable density (see \cite[Theorem 2.1.4]{Nualart}).

\begin{theorem}\label{thm:existence-densitiy}
Assume Assumption \ref{H} and condition \eqref{dalang-stra}. Furthermore, suppose that $u_0(x)>0$ almost everywhere and 
\begin{equation}\label{e:existence-density}
    \E[\gamma(X_r^x-\tilde X_s^x)]>0,
\end{equation} where $\tilde X$ is an independent copy of $X$.  Then, the law of $u(t,x)$ given by \eqref{e:FK1} has a probability density. 
\end{theorem}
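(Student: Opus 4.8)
The plan is to verify the two hypotheses of the Bouleau--Hirsch criterion recalled just above the statement. From Step~1 of the proof of Theorem~\ref{thm:FK} (which goes through verbatim for bounded $u_0$) we know that $u(t,x)\in\mathbb D^{1,2}$ and that its Malliavin derivative is given by the obvious analogue of \eqref{e:Du},
\[
Du(t,x)=\E_X\!\left[u_0(X^x_t)\exp\!\big(V_{t,x}(X)\big)\,\mathfrak{\delta}(X^x_{t-\cdot}-\cdot)\right]\in\mathcal H\quad\text{a.s.},
\]
where we write $V_{t,x}(X):=W(\mathfrak{\delta}(X^x_{t-\cdot}-\cdot))$, and $V_{t,x}(\tilde X)$ for the same quantity with $X$ replaced by an independent copy $\tilde X$. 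It therefore suffices to prove that $\|Du(t,x)\|_{\mathcal H}>0$ almost surely.

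First I would represent $\|Du(t,x)\|_{\mathcal H}^2$ as an expectation over two independent copies $X,\tilde X$ of the Feller process (both independent of $W$). Using $\langle\mathfrak{\delta}(X^x_{t-\cdot}-\cdot),\mathfrak{\delta}(\tilde X^x_{t-\cdot}-\cdot)\rangle_{\mathcal H}=\int_0^t\int_0^t|r-s|^{-\beta_0}\gamma(X^x_r-\tilde X^x_s)\,drds$, one gets
\[
\|Du(t,x)\|_{\mathcal H}^2=\E_{X,\tilde X}\!\left[u_0(X^x_t)\,u_0(\tilde X^x_t)\,e^{V_{t,x}(X)+V_{t,x}(\tilde X)}\int_0^t\!\!\int_0^t|r-s|^{-\beta_0}\gamma(X^x_r-\tilde X^x_s)\,drds\right].
\]
The Fubini-type interchange of $\E_X$ with the $\mathcal H$-inner product here is legitimate thanks to the exponential-integrability estimates of Theorems~\ref{thm:exp} and~\ref{thm:FK}; in particular, for $P$-a.e.\ realization of the noise $W$ the conditional Gaussian weight $e^{V_{t,x}(X)}$ is finite for $P_X$-a.e.\ $X$.

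The heart of the argument is a positivity observation. Fix a realization of $W$ for which the above display holds. The integrand is nonnegative, since $u_0\ge0$, $\gamma\ge0$, $|r-s|^{-\beta_0}>0$, and $e^{V_{t,x}(X)+V_{t,x}(\tilde X)}>0$; hence $\|Du(t,x)\|_{\mathcal H}=0$ would force the integrand to vanish $P_X\otimes P_{\tilde X}$-a.s., and because the exponential factor is strictly positive and finite this is equivalent to
\[
u_0(X^x_t)\,u_0(\tilde X^x_t)\int_0^t\!\!\int_0^t|r-s|^{-\beta_0}\gamma(X^x_r-\tilde X^x_s)\,drds=0\qquad P_X\otimes P_{\tilde X}\text{-a.s.},
\]
a statement no longer involving $W$. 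The hypothesis $u_0>0$ a.e., together with the absolute continuity of the law of $X^x_t$ guaranteed by Assumption~\ref{H}, gives $P(u_0(X^x_t)>0)=1$ and likewise for $\tilde X^x_t$, so the displayed identity would force $\int_0^t\int_0^t|r-s|^{-\beta_0}\gamma(X^x_r-\tilde X^x_s)\,drds=0$ $P_X\otimes P_{\tilde X}$-a.s. This contradicts
\[
\E_{X,\tilde X}\!\left[\int_0^t\!\!\int_0^t|r-s|^{-\beta_0}\gamma(X^x_r-\tilde X^x_s)\,drds\right]=\int_0^t\!\!\int_0^t|r-s|^{-\beta_0}\,\E\!\left[\gamma(X^x_r-\tilde X^x_s)\right]drds>0,
\]
the positivity being exactly hypothesis~\eqref{e:existence-density} (together with $\beta_0\in[0,1)$, so that $|r-s|^{-\beta_0}>0$ for a.e.\ $(r,s)$). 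Hence $\|Du(t,x)\|_{\mathcal H}>0$ for $P$-a.e.\ $W$, i.e.\ almost surely, and Bouleau--Hirsch yields a density for the law of $u(t,x)$.

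I expect the main technical obstacle to be not the positivity step but the careful bookkeeping between the two independent sources of randomness: one must confirm that $Du(t,x)$ as written genuinely belongs to $\mathcal H$ almost surely, that the double-expectation formula for $\|Du(t,x)\|_{\mathcal H}^2$ is justified (which reduces to the exponential integrability already established), and, crucially, that for $P$-a.e.\ $W$ the conditional Gaussian variable $V_{t,x}(X)$ is $P_X$-a.s.\ finite, so that the exponential weight cannot on its own be responsible for the vanishing of the $\mathcal H$-norm.
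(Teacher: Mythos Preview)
Your proof is correct and follows essentially the same route as the paper's: both compute $\|Du(t,x)\|_{\mathcal H}^2$ via the two-copy formula \eqref{e:DuH} and then invoke the positivity of $u_0(X^x_t)u_0(\tilde X^x_t)e^{V_{t,x}+\tilde V_{t,x}}$ together with hypothesis~\eqref{e:existence-density} to conclude $\|Du(t,x)\|_{\mathcal H}>0$ a.s. Your version is simply more explicit about the contradiction argument and about why $u_0(X^x_t)>0$ a.s.\ (absolute continuity of the law of $X^x_t$ from Assumption~\ref{H}), whereas the paper compresses this into a single sentence.
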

\begin{remark}
The condition \eqref{e:existence-density} is satisfied, for instance, if we assume $\gamma(x)$ is a strictly positive measurable function, or  if we assume $\gamma(x)=\boldsymbol{\delta}(x)$ and the density function $p_t^{(x)}(y) >0$ for all $x,y\in\R$.  
\end{remark}

\begin{proof}
By Theorem \ref{thm:FK}, we have $u(t,x) =\E_X\left[u_0(X_t^x)\exp\left( V_{t,x}\right) \right]$, where  $V_{t,x}=W(\boldsymbol{\delta}(X_{t-\cdot}^x-\cdot))$ is given by \eqref{V_{t,x}}. By \eqref{e:Du}, The Malliavin derivative of $u(t,x)$ is, 
\begin{equation}\label{e:Du'}
D_{s,y}u(t,x)=\E_X\left[u_0(X_t^x)\exp\left( V_{t,x}\right)\boldsymbol{\delta}(X_{t-s}^x-y) \right], 
\end{equation}
and hence,  denoting $\tilde V_{t,x} = \boldsymbol{\delta}(\tilde X_{t-\cdot}-\cdot)$ where $\tilde X$ is an independent copy of $X$,
 \begin{equation}\label{e:DuH}
 \begin{aligned}
     \|Du(t,x)\|^2_{\mathcal H} &=\E_{X,\tilde X}\left[u_0(X_t^x)u_0(\tilde X_t^x) \exp\left(V_{t,x} +\tilde V_{t,x}\right) \la \boldsymbol{\delta}(X^x_{t-\cdot}-\cdot), \boldsymbol{\delta}(\tilde X^x_{t-\cdot}-\cdot)\ra_{\mathcal H} \right]\\
     &=\E_{X,\tilde X} \left[u_0(X_t^x)u_0(\tilde X_t^x) \exp\left(V_{t,x} +\tilde V_{t,x}\right)\int_0^t\int_0^t |r-s|^{-\beta_0}\gamma(X_r^x-\tilde X_s^x)drds \right].
 \end{aligned}
 \end{equation}
Noting that  $u_0(X_t^x)u_0(\tilde X_t^x) \exp\left(V_{t,x} +\tilde V_{t,x}\right)>0$,  by \eqref{e:existence-density} and \eqref{e:DuH}, we have $\|Du(t,x)\|^2_{\mathcal H}$ is positive a.s. 
\end{proof}

\begin{theorem}\label{thm:smooth-density1}
Assume Assumption \ref{H} and condition \eqref{dalang-stra}. In addition, we assume for all $p>0$, \begin{equation}\label{e:con-smooth} 
 \E|u_0(X_t^x)|^{-p}+\E \left(\int_0^t \int_0^t \gamma(X_r^x-\tilde X_s^x) drds\right)^{-p}<\infty,
\end{equation}
where $\tilde X$ is an independent copy of $X$. Then, the law of $u(t,x)$ given by \eqref{e:FK1} has a smooth probability density. 
\end{theorem}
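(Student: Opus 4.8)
The plan is to invoke the Malliavin calculus criterion recalled just before Theorem~\ref{thm:existence-densitiy}, i.e.\ \cite[Theorem 2.1.4]{Nualart}: it suffices to prove that (a) $u(t,x)\in\mathbb D^\infty$ and (b) $\E\big[\|Du(t,x)\|_{\mathcal H}^{-p}\big]<\infty$ for every $p>0$. For (a) I would run the approximation argument from the proof of Theorem~\ref{thm:FK} at the level of derivatives of arbitrary order: one has $D^k u^{\eps,\delta}(t,x)=\E_X\big[\exp(W(A_{t,x}^{\eps,\delta}))\,(A_{t,x}^{\eps,\delta})^{\otimes k}\big]$, and the computation that produced \eqref{e:conv-Du} --- now carried out with $2q$ mutually independent copies of $X^x$ --- shows that $u^{\eps,\delta}(t,x)\to u(t,x)$ in $\mathbb D^{k,q}$ for all $k,q\ge 1$, with limit $D^ku(t,x)=\E_X\big[u_0(X_t^x)\exp(V_{t,x})\,\mathfrak{\delta}(X_{t-\cdot}^x-\cdot)^{\otimes k}\big]$. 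The uniform moment bounds needed here, after integrating out $W$ in the exponential (which yields a factor $\exp\big(\tfrac12\big\|\sum_{i}\mathfrak{\delta}(X_{t-\cdot}^{(i),x}-\cdot)\big\|_{\mathcal H}^2\big)$), reduce to the exponential integrability of the Hamiltonian (Theorem~\ref{exp'}) together with the elementary bound $\big\|\sum_{i=1}^{n}f_i\big\|_{\mathcal H}^2\le n\sum_{i=1}^n\|f_i\|_{\mathcal H}^2$ and Lemma~\ref{lemma2}; so (a) is routine.

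The substance is (b). First I would record that $u(t,x)$ has negative moments of all orders. Since $\E_X[\cdot]$ integrates out $X$ only, Jensen's inequality for the concave function $\log$ gives
\[\log u(t,x)=\log\E_X\big[u_0(X_t^x)e^{V_{t,x}}\big]\ \ge\ \E_X\big[\log u_0(X_t^x)\big]+\E_X\big[V_{t,x}\big],\]
where $\E_X[\log u_0(X_t^x)]$ is a finite deterministic constant (its positive part is bounded because $u_0$ is bounded, and $\E_X\big[(\log u_0(X_t^x))^-\big]\le\E|u_0(X_t^x)|^{-1}<\infty$ by \eqref{e:con-smooth}), and $\E_X[V_{t,x}]$ is a functional of $W$ that satisfies, by Jensen's inequality once more and Theorem~\ref{thm:exp}, $\E_W\big[e^{-q\E_X[V_{t,x}]}\big]\le\E_W\E_X\big[e^{-qV_{t,x}}\big]<\infty$ for every $q>0$. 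Hence $\E_W\big[u(t,x)^{-q}\big]<\infty$ for all $q>0$.

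Next, set $G=u_0(X_t^x)u_0(\tilde X_t^x)\exp(V_{t,x}+\tilde V_{t,x})>0$ and $H=\int_0^t\int_0^t|r-s|^{-\beta_0}\gamma(X_r^x-\tilde X_s^x)\,dr\,ds$, so that by \eqref{e:DuH} one has $\|Du(t,x)\|_{\mathcal H}^2=\E_{X,\tilde X}[GH]$ (an expectation over $X,\tilde X$ only), while $\E_{X,\tilde X}[G]=u(t,x)^2$ by the independence of $X$ and $\tilde X$ and Theorem~\ref{thm:FK}. Passing to the probability measure $d\mathbb Q=G\,u(t,x)^{-2}\,d\mathbb P_{X,\tilde X}$ and using the convexity of $z\mapsto z^{-p}$,
\[\|Du(t,x)\|_{\mathcal H}^{-2p}=u(t,x)^{-2p}\big(\E_{\mathbb Q}[H]\big)^{-p}\le u(t,x)^{-2p-2}\,\E_{X,\tilde X}\big[G H^{-p}\big]\le\|u_0\|_\infty^2\,u(t,x)^{-2p-2}\,\E_{X,\tilde X}\big[e^{V_{t,x}+\tilde V_{t,x}}H^{-p}\big].\]
Taking $\E_W$ and applying the Cauchy--Schwarz inequality in $W$, the factor $\big(\E_W[u(t,x)^{-4p-4}]\big)^{1/2}$ is finite by the previous paragraph; for $\big(\E_W\big[\big(\E_{X,\tilde X}[e^{V_{t,x}+\tilde V_{t,x}}H^{-p}]\big)^2\big]\big)^{1/2}$ one expands the square with an independent copy $(X',\tilde X')$ of $(X,\tilde X)$ and integrates out $W$ to obtain $\exp\big(\tfrac12\Xi\big)$, where $\Xi$ is the squared $\mathcal H$-norm of $\mathfrak{\delta}(X_{t-\cdot}^x-\cdot)+\mathfrak{\delta}(\tilde X_{t-\cdot}^x-\cdot)+\mathfrak{\delta}(X'_{t-\cdot}-\cdot)+\mathfrak{\delta}(\tilde X'_{t-\cdot}-\cdot)$ and hence $\Xi\le 4\sum_{Y}\int_0^t\int_0^t|r-s|^{-\beta_0}\gamma(Y_r-Y_s)\,dr\,ds$ with $Y$ ranging over the four copies. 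Hölder's inequality in $(X,\tilde X,X',\tilde X')$ separating $e^{\frac12\Xi}$, $H(X,\tilde X)^{-p}$ and $H(X',\tilde X')^{-p}$ then bounds this by $\big(\E[e^{\frac32\Xi}]\big)^{1/3}\big(\E[H(X,\tilde X)^{-3p}]\big)^{2/3}$; here $\E[e^{\frac32\Xi}]<\infty$ by Theorem~\ref{exp'} and the independence of the copies, and $\E[H(X,\tilde X)^{-3p}]<\infty$ because $|r-s|^{-\beta_0}\ge t^{-\beta_0}$ on $[0,t]^2$ gives $H(X,\tilde X)^{-3p}\le t^{3p\beta_0}\big(\int_0^t\int_0^t\gamma(X_r^x-\tilde X_s^x)\,dr\,ds\big)^{-3p}$, whose expectation is finite by \eqref{e:con-smooth}. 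This gives $\E\big[\|Du(t,x)\|_{\mathcal H}^{-p}\big]<\infty$ for every $p>0$, and \cite[Theorem 2.1.4]{Nualart} delivers the smooth density.

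The main obstacle is the lower bound on $\|Du(t,x)\|_{\mathcal H}$ in step (b): because the random factor $\exp(V_{t,x}+\tilde V_{t,x})$ sits inside the $X,\tilde X$-expectation defining $\|Du(t,x)\|_{\mathcal H}^2$, no deterministic pointwise lower bound is available, so one must remove it via the change of measure $\mathbb Q$ and Jensen's inequality --- and this is exactly what forces the negative moments of $u(t,x)$ itself into the argument, whence the preliminary estimate of the second paragraph; by contrast, establishing $u(t,x)\in\mathbb D^\infty$ is routine given the machinery already developed in Section~\ref{sec:stra}.
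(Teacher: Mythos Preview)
Your argument is correct, but it is considerably more elaborate than the paper's. The paper applies Jensen's inequality for $z\mapsto z^{-p}$ directly to the \emph{product} inside $\E_{X,\tilde X}$ in \eqref{e:DuH}:
\[
\|Du(t,x)\|_{\mathcal H}^{-2p}=\Big(\E_{X,\tilde X}\big[u_0(X_t^x)u_0(\tilde X_t^x)\,e^{V_{t,x}+\tilde V_{t,x}}\,\langle\mathfrak{\delta}(X^x_{t-\cdot}-\cdot),\mathfrak{\delta}(\tilde X^x_{t-\cdot}-\cdot)\rangle_{\mathcal H}\big]\Big)^{-p}
\le \E_{X,\tilde X}\big[(u_0u_0)^{-p}e^{-p(V+\tilde V)}\langle\cdot,\cdot\rangle_{\mathcal H}^{-p}\big],
\]
and then a single H\"older inequality (three factors) in the full expectation $\E=\E_W\E_{X,\tilde X}$ splits this into $\big(\E|u_0(X_t^x)|^{-pp_1}\big)^{2/p_1}$, $\big(\E e^{-pp_2(V_{t,x}+\tilde V_{t,x})}\big)^{1/p_2}$ and $\big(\E\langle\cdot,\cdot\rangle_{\mathcal H}^{-pp_3}\big)^{1/p_3}$, each finite by \eqref{e:con-smooth}, Theorem~\ref{thm:exp}, and the bound $|r-s|^{-\beta_0}\ge t^{-\beta_0}$ respectively. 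That is the whole proof of (b).

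By contrast, you first isolate $G=u_0u_0\,e^{V+\tilde V}$ from $H=\langle\cdot,\cdot\rangle_{\mathcal H}$ via a change of measure, which forces you to manufacture negative moments of $u(t,x)$ itself, and then to introduce four independent copies and a further H\"older step. All of this is avoidable: your closing remark that one ``must'' remove the random factor $e^{V+\tilde V}$ via the change of measure is not accurate --- Jensen handles the whole integrand $GH$ at once, turning the unwanted exponential into $e^{-p(V+\tilde V)}$, whose moments are already controlled by Theorem~\ref{thm:exp}. Your route does have the side benefit of producing the bound $\E[u(t,x)^{-q}]<\infty$, which is of independent interest, but it is not needed for the theorem as stated.
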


\begin{proof}

Using a similar argument leading to \eqref{e:Du}, we can show $u(t,x)\in \mathbb D^\infty.$ Then, to prove that $u(t,x)$ has a smooth density, it suffices to show (see \cite[Theorem 2.1.4]{Nualart}) $\E[\|Du(t,x)\|_{\mathcal H}^{-2p}]<\infty.$
 Applying Jensen's inequality to \eqref{e:DuH} yields
\begin{align*}
     \|Du(t,x)\|^{-2p}_{\mathcal H} \le &\E_{X, \tilde X} \left[\left|u_0(X_t^x)u_0(\tilde X_t^x)\right|^{-p} \exp\left(-p[V_{t,x} +\tilde V_{t,x}]\right)\left| \la \boldsymbol{\delta}(X^x_{t-\cdot}-\cdot), \boldsymbol{\delta}(\tilde X^x_{t-\cdot}-\cdot)\ra_{\mathcal H}\right|^{-p}\right],
 \end{align*}
and then H\"older's inequality implies
\begin{align}
     \E \big[\|Du(t,x)\|^{-2p}_{\mathcal H}\big] \le I_1 I_2 I_3 
\end{align}
where $I_1=\left(\E \left|u_0(X_t^x)u_0(\tilde X_t^x)\right|^{-pp_1}\right)^{1/{p_1}}, I_2 = \left(\E \exp\left(-pp_2[V_{t,x} +\tilde V_{t,x}]\right)\right)^{1/p_2}$ and \[I_3=\left(\E \left| \la \boldsymbol{\delta}(X^x_{t-\cdot}-\cdot), \boldsymbol{\delta}(\tilde X^x_{t-\cdot}-\cdot)\ra_{\mathcal H}\right|^{-pp_3}\right)^{1/p_3}, \]
with $\frac1{p_1}+\frac1{p_2}+\frac1{p_3}=1$. By the assumption on $u_0(x)$ and Theorem \ref{thm:exp}, it is clear that both $I_1$ and $I_2$ are finite. For the term $I_3$, by \eqref{e:inner-prod} we have
\begin{align*}
    I_3^{p_3} =& \E \left(\int_0^t \int_0^t |r-s|^{-\beta_0} \gamma(X_r^x-\tilde X_s^x) drds \right)^{-pp_3}
    \le t^{\beta_0pp_3} \E \left(\int_0^t \int_0^t \gamma(X_r^x-\tilde X_s^x) drds\right)^{-pp_3},
\end{align*}
where we use the fact that $|r-s| \le t$. The right-hand side is finite due to the assumption \eqref{e:con-smooth}.
\end{proof}

Condition~\eqref{e:con-smooth} is related to the small-ball probability of $\int_0^t\int_0^t \gamma(X_r^x-\tilde X_s^x) drds$, that is, the probability  $\mathbb P(\int_0^t\int_0^t \gamma(X_r^x-\tilde X_s^x) drds<\eps)$ as $\eps\downarrow0$. { In particular, when $\gamma(x)=\boldsymbol{\delta}(x)$, this becomes the small-ball probability of the mutual intersection local time of $X$, which is usually difficult to estimate. It is known from the recent work~\cite{cs25} that, when $X$ is one-dimensional Brownian motion,  $\mathbb P(\int_0^t\int_0^t \delta(B_r-\tilde B_s) drds<\eps) \sim \eps^{2/3}$. Therefore, the inequality~\eqref{e:con-smooth} holds only for $p\in(0,2/3)$.}  However, if $\gamma(x)$ is a classical measurable function other than the Dirac delta function,  by Jensen's inequality, a sufficient condition for \eqref{e:con-smooth} which is easier to justify { (see Remark~\ref{rem:gamma--p} below)} is the following
    \begin{equation}\label{e:gamma-p}
    \sup_{r,s\in[0,t]} \E \left[|\gamma(X_r^x-\tilde X_s^x)|^{-p}\right]<\infty. 
    \end{equation}

\begin{corollary}
\label{thm:smooth-density11} Assume Assumption \ref{H} and condition \eqref{dalang-stra}.   Suppose that  $\E[|u_0(X_t^x)|^{-p}]<\infty$ and that the covariance function $\gamma$ is a measurable function such that \eqref{e:gamma-p} is satisfied for all $p>0$. Then, the law of $u(t,x)$ given by \eqref{e:FK1} has a smooth probability density. 
\end{corollary}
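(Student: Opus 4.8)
The plan is to deduce the Corollary directly from Theorem~\ref{thm:smooth-density1}: it suffices to verify that the hypotheses stated here imply condition \eqref{e:con-smooth}. The first summand in \eqref{e:con-smooth}, namely $\E|u_0(X_t^x)|^{-p}<\infty$ for all $p>0$, is assumed outright. So the only thing to check is that
\[
\E\Big[\Big(\int_0^t\int_0^t \gamma(X_r^x-\tilde X_s^x)\,dr\,ds\Big)^{-p}\Big]<\infty \qquad \text{for every } p>0,
\]
with $\tilde X$ an independent copy of $X$. This is precisely the content of the remark preceding the Corollary, and the proof formalizes that remark.

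First I would note that \eqref{e:gamma-p} forces $\gamma(X_r^x-\tilde X_s^x)>0$ almost surely for a.e.\ $(r,s)\in[0,t]^2$, since otherwise $\E[\gamma(X_r^x-\tilde X_s^x)^{-p}]=\infty$; in particular the double integral $\int_0^t\int_0^t \gamma(X_r^x-\tilde X_s^x)\,dr\,ds$ is strictly positive almost surely, so its negative powers are well defined. Then, for fixed realizations of $X$ and $\tilde X$, I would apply Jensen's inequality on the probability space $\big([0,t]^2,\ t^{-2}\,dr\,ds\big)$ with the convex function $y\mapsto y^{-p}$ on $(0,\infty)$, which gives
\[
\Big(\int_0^t\int_0^t \gamma(X_r^x-\tilde X_s^x)\,dr\,ds\Big)^{-p}\le t^{-2-2p}\int_0^t\int_0^t \gamma(X_r^x-\tilde X_s^x)^{-p}\,dr\,ds .
\]
Taking expectations and using Tonelli's theorem (the integrand is non-negative and jointly measurable in $(\omega,r,s)$) to exchange $\E$ with $\int_{[0,t]^2}$, I obtain
\[
\E\Big[\Big(\int_0^t\int_0^t \gamma(X_r^x-\tilde X_s^x)\,dr\,ds\Big)^{-p}\Big]
\le t^{-2-2p}\int_0^t\int_0^t \E\big[\gamma(X_r^x-\tilde X_s^x)^{-p}\big]\,dr\,ds
\le t^{-2p}\,\sup_{r,s\in[0,t]}\E\big[\gamma(X_r^x-\tilde X_s^x)^{-p}\big],
\]
which is finite by \eqref{e:gamma-p}. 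Hence condition \eqref{e:con-smooth} holds, and Theorem~\ref{thm:smooth-density1} yields that the law of $u(t,x)$ given by \eqref{e:FK1} has a smooth probability density.

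I do not anticipate a real obstacle: the argument is a one-line Jensen estimate followed by Tonelli. The only points needing minor care are (i) that $\gamma(X_r^x-\tilde X_s^x)$ is a.s.\ positive, so that the convex function $y\mapsto y^{-p}$ is applicable on the range of the integrand (automatic from \eqref{e:gamma-p}, by a Fubini argument showing that for a.e.\ $\omega$ the positivity holds for a.e.\ $(r,s)$), and (ii) the joint measurability of $(\omega,r,s)\mapsto\gamma(X_r^x(\omega)-\tilde X_s^x(\omega))$ needed to invoke Tonelli, which follows from the measurability of $\gamma$ and the existence of jointly measurable versions of $X^x$ and $\tilde X^x$.
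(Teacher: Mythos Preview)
Your proposal is correct and follows exactly the route the paper indicates: the text preceding the Corollary says that, when $\gamma$ is a measurable function, Jensen's inequality shows \eqref{e:gamma-p} is a sufficient condition for \eqref{e:con-smooth}, whence Theorem~\ref{thm:smooth-density1} applies. Your argument makes this one-line remark precise via Jensen on $([0,t]^2,t^{-2}\,dr\,ds)$ with the convex map $y\mapsto y^{-p}$, followed by Tonelli; this is the intended proof.
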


\begin{remark} \label{rem:gamma--p}   
By Assumption \ref{H}, we have
\begin{equation}\label{e:con-P-gamma}
\E \left[|\gamma(X_r^x-\tilde X_s^x)|^{-p}\right]= \E \int_{\R^d} p_{r}^{(\tilde X_s^{x})}(y)|\gamma(y-
\tilde X_r^x)|^{-p} dy\le C \int_{\R^d} P_r(y) |\gamma(y)|^{-p} dy.
\end{equation}
The right-hand side of \eqref{e:con-P-gamma} is uniformly bounded for $r,s\in[0,t]$, if we assume, for instance, the Feller process  $X$ is a diffusion process given by \eqref{x} whose  density function satisfies \eqref{eq-transition density} and  
\begin{equation}\label{e:con-gamma}
(\gamma(x))^{-1}\le C e^{C|x|^\alpha} \text{ for some } \alpha\in(0,2). 
\end{equation}
The spatial covariance satisfying \eqref{e:con-gamma} includes the kernels mentioned in the Introduction such as the Riesz kernel, Poisson kernel, Cauchy Kernel, and Ornstein-Uhlenbeck kernel with $\alpha\in(0,2)$.  Another typical situation in which \eqref{e:con-P-gamma} is uniformly bounded   is the following: $P_r(y)$ is  a rapidly decreasing function and $(\gamma(x))^{-1}$ is at polynomial growth. 
\end{remark}

\section{Skorohod solution}\label{sec:sko}

In this section, we consider the equation \eqref{spde} in the Skorohod sense. 

\begin{definition} \label{Def-Skorohod}
A random field $u = \{u(t,x): t \ge 0, x \in \bR^d\}$ is  a mild  Skorohod solution to \eqref{spde} if for all $t\ge0$ and $x\in\R^d$, $\E[|u(t,x)|^2]<\infty$, $u(t,x)$ is $\mathcal F_t^W$-measurable,  and the 
 following integral equation holds
\begin{equation}\label{e:mild-Skro}
    u(t,x) = \int_{\R^d}p_t^{(x)}(y)u_0(y)dy + \int_0^t \int_{\bR^d} p^{(x)}_{t-s}(y) u(s,y) W(ds,dy),
\end{equation}
where the stochastic integral is in the Skorohod sense.
\end{definition}

Suppose  $u$ is a mild Skorohod solution to \eqref{spde}.  Then repeating \eqref{e:mild-Skro}, we have the following Wiener chaos expansion of $u(t,x)$:
\begin{align} \label{eq-expansion}
    u(t,x) = \sum_{n=0}^\infty I_n(\tilde f_n(\cdot,t,x)),
\end{align}
where
\begin{align} \label{eq-fn}
    f_n(t_1,x_1,\ldots,t_n,x_n,t,x)
    = p^{(x)}_{t-t_n}(x_n) \ldots p^{(x_2)}_{t_2-t_1}(x_1) \int_{\R^d}p^{(x_1)}_{t_1} (y)) u_0(y)dy \cdot {\bf 1}_{\{0<t_1<\ldots<t_n<t\}}
\end{align}
%{\blue \begin{align*} 
%f_n(t_1,x_1,\ldots,t_n,x_n,t,x)= p_{t-t_n}(x-x_n) \ldots p_{t_2-t_1}(x_2-x_1) \int_{\R^d}p_{t_1} (x_1-y)) u_0(y)dy {\bf 1}_{\{0<t_1<\ldots<t_n<t\}}
%\end{align*}}
and $\tilde f_n$ is the symmetrization of $f_n$, i.e., 
\begin{align*}
    \tilde f_n (t_1,x_1,\ldots,t_n,x_n,t,x)
    =& \dfrac{1}{n!} \sum_{\sigma \in S_n} f_n (t_{\sigma(1)},x_{\sigma(1)},\ldots,t_{\sigma(n)},x_{\sigma(n)},t,x) \\
    =& \dfrac{1}{n!}f_n (t_{\tau(1)},x_{\tau(1)},\ldots,t_{\tau(n)},x_{\tau(n)},t,x),
\end{align*}
where $S_n$ is the set of all permutations of $\{1, 2, \dots, n\}$ and $\tau \in S_n$ is the permutation such that $0<t_{\tau(1)}<\ldots<t_{\tau(n)}<t$. In view  of the expansion \eqref{eq-expansion}, there exists a unique  mild Skorohod solution to \eqref{spde} if and only if \begin{align}\label{eq-series}
    \sum_{n=0}^\infty n! \| \tilde f_n (\cdot,t,x) \|^2_{\cH^{\otimes n}}
    < \infty, \quad \text{ for all } \ (t,x) \in [0,T] \times \bR^d.
\end{align}
\subsection{Existence and uniqueness}
\begin{theorem}\label{Thm-solution}
Assume assumption \ref{H} and the Dalang's condition \eqref{dalang-SKo}. Then, \eqref{eq-series} holds, and hence  $u(t,x)$ given by \eqref{eq-expansion} is the unique mild Skorohod solution to \eqref{spde}.
\end{theorem}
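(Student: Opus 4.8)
The plan is to establish the summability condition \eqref{eq-series} by estimating the norms $\|\tilde f_n(\cdot,t,x)\|_{\cH^{\otimes n}}^2$ and summing the resulting series. First I would use the fact that for a symmetrized kernel, $\|\tilde f_n\|_{\cH^{\otimes n}}^2$ can be bounded by a sum over pairs $(\sigma,\sigma')\in S_n\times S_n$ of inner products $\langle f_n(\cdot_\sigma),f_n(\cdot_{\sigma'})\rangle_{\cH^{\otimes n}}$, divided by $(n!)^2$; using the Cauchy--Schwarz inequality in $\cH^{\otimes n}$ together with the symmetry of the time simplex, this reduces to $n!\|\tilde f_n\|^2 \le \frac{1}{n!}\sum_{\sigma,\sigma'}\langle\ldots\rangle$, and one typically arrives at an expression of the form
\begin{equation*}
n!\|\tilde f_n(\cdot,t,x)\|_{\cH^{\otimes n}}^2 \le C^n \int_{[0,t]_<^n}\int_{\R^{nd}} \prod_{j=1}^n |\hat p_{t_{j+1}-t_j}(\xi_j+\cdots)|^2 \,\boldsymbol\mu(d\boldsymbol\xi)\,d\mathbf{t}
\end{equation*}
after expanding the inner product using the Fourier representation in \eqref{e:inner-prod}, with the $|t_j - s_j|^{-\beta_0}$ factors from the temporal covariance handled by Lemma \ref{l2}-type bounds.

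Next I would invoke Assumption \ref{H} in the form $0\le \hat P_t(\xi)\le c_1 e^{-c_2 t\Psi(\xi)}$ to replace each transition-density Fourier factor by an exponential, turning the spatial integral into $\int_{\R^{nd}}\prod_j e^{-c_2(t_{j+1}-t_j)\Psi(\xi_j)}\boldsymbol\mu(d\boldsymbol\xi)$ (or a variant with shifted arguments, which after a change of variables in $\boldsymbol\xi$ and monotonicity arguments on $\Psi$ can be dominated similarly). Performing the change of variables $r_j = t_{j+1}-t_j$ mapping the simplex to $\Sigma_t^n$, and using $\|u_0\|_\infty<\infty$ to bound the boundary term, the whole estimate factorizes after integrating out the $\xi_j$'s and $r_j$'s: one obtains a bound of the form $\frac{C^n}{n!}\left(\int_{\R^d}\frac{1}{1+\Psi(\xi)}\mu(d\xi)\right)^n$ (possibly with an extra $e^{c_2 Mt}$ from shifting $\Psi\mapsto M+\Psi$, as in the proof of Theorem \ref{exp'}). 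Summing over $n$ then gives a convergent series precisely because of the weaker Dalang condition \eqref{dalang-SKo}.

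The main obstacle I anticipate is the bookkeeping in bounding the symmetrized norm: unlike the Stratonovich Hamiltonian where the kernel is already symmetric in its arguments, here $f_n$ has the nested product structure \eqref{eq-fn}, so the cross terms $\langle f_n(\cdot_\sigma), f_n(\cdot_{\sigma'})\rangle$ for $\sigma\ne\sigma'$ must be controlled uniformly. The standard device (cf. \cite{HHNT2015,Song17}) is to use Cauchy--Schwarz to reduce to the diagonal $\sigma=\sigma'$ at the cost of a factor $n!$, which is exactly absorbed; making this rigorous requires care that the Fourier-side manipulations are legitimate (the spectral measure $\mu$ is only tempered, but the $e^{-c_2 t\Psi(\xi)}$ factors provide the needed integrability for $t>0$, as already used repeatedly in Section \ref{sec:Hamiltonian}). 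Once \eqref{eq-series} is verified, the existence and uniqueness of the mild Skorohod solution follows immediately from the equivalence stated just before the theorem, since \eqref{eq-series} is exactly the necessary and sufficient condition for the Wiener chaos series \eqref{eq-expansion} to converge in $L^2(\Omega)$.
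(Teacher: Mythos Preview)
Your overall strategy—bound $n!\|\tilde f_n\|_{\cH^{\otimes n}}^2\le n!\|f_n\|_{\cH^{\otimes n}}^2$, pass to Fourier variables, invoke Assumption~\ref{H}, and reduce to an integral over the time simplex—matches the paper's. However, the claimed output
\[
n!\|\tilde f_n(\cdot,t,x)\|_{\cH^{\otimes n}}^2 \;\le\; \frac{C^n}{n!}\left(\int_{\R^d}\frac{\mu(d\xi)}{1+\Psi(\xi)}\right)^n
\]
does not follow from your sketch, and in fact is not what one obtains. After the reduction you describe, one is left with (writing $\eta_j=\xi_1+\cdots+\xi_j$)
\[
n!\|\tilde f_n\|_{\cH^{\otimes n}}^2 \;\le\; C^n\, n!\,A_t^n \int_{\R^{nd}}\int_{\Sigma_t^n}\prod_{j=1}^n e^{-c_2 r_j\Psi(\eta_j)}\,d\mathbf{r}\,\boldsymbol\mu(d\boldsymbol\xi),
\]
and the $M$-shift bounds the inner integral by $\prod_j (M+\Psi(\eta_j))^{-1}$ \emph{without} any $1/n!$: the simplex volume $t^n/n!$ is only recovered when the exponent is bounded (i.e.\ for small $\Psi(\eta_j)$), while the $(M+\Psi)^{-1}$ decay is only useful when $\Psi$ is large. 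These two regimes are complementary, not simultaneous, so the leading $n!$ survives and $\sum_n n!\,C^n(\cdots)^n$ diverges regardless of how small the Dalang integral is made via the $M$-shift. The analogy with Theorem~\ref{exp'} is misleading: there one $1/m!$ comes from the Taylor expansion and a second $t^m/m!$ from integrating out half of the $2m$ time variables, together cancelling the $(2m)!$ from permutations; no such mechanism is available here.

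The paper's remedy (following \cite[Theorem~5.3]{Song17}) is precisely a low/high-frequency splitting: with $m_N=\mu(\{|\xi|\le N\})$ and $\epsilon_N=\int_{|\xi|>N}\Psi(\xi)^{-(1-\beta_0)}\mu(d\xi)$ one shows
\[
n!\|\tilde f_n\|_{\cH^{\otimes n}}^2 \;\le\; A_t^n\sum_{k=0}^n\binom{n}{k}\frac{t^k}{k!}\,m_N^k\,(C_0\epsilon_N)^{n-k},
\]
where the $t^k/k!$ arises from the simplex volume on the $k$ low-frequency variables and the smallness of $\epsilon_N$ (for $N$ large, by~\eqref{dalang-SKo} and $\Psi(\xi)\to\infty$) handles the remaining ones; summation in $n$ then converges once $2A_tC_0\epsilon_N<1$. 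This splitting is the missing idea in your proposal. Your remark about ``monotonicity arguments on $\Psi$'' to untangle the shifted arguments $\Psi(\xi_1+\cdots+\xi_j)$ is also unjustified—no monotonicity of $\Psi$ is assumed—and this coupling is one of the technical points the argument in \cite{Song17} is designed to absorb. Finally, the symmetrization you flag as the ``main obstacle'' is in fact immediate here: $\|\tilde f_n\|\le\|f_n\|$ by convexity, exactly as the paper uses; the real work lies entirely in the summability estimate.
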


\begin{proof}
Without loss of generality, we assume that $u_0 \equiv 1$. We define the following function:
\begin{align} \label{eq-Fn}
    F_n(t_1,x_1,\ldots,t_n,x_n,t,x)
    = P_{t-t_n}(x-x_n) \ldots P_{t_2-t_1}(x_2-x_1) {\bf 1}_{\{0<t_1<\ldots<t_n<t\}}.
\end{align}
Then by assumption \ref{H}, we have
\begin{align} \label{eq-fn<Fn}
    f_n(t_1,x_1,\ldots,t_n,x_n,t,x) \le C^n F_n(t_1,x_1,\ldots,t_n,x_n,t,x).
\end{align}

Note that
\begin{equation}\label{e:FFn}
   \cF F_n(t_1,\cdot,\ldots,t_n,\cdot,t,x) (\xi_1,\ldots,\xi_n)
    = e^{-\iota (\xi_1+\ldots+\xi_n)\cdot x } \prod_{j=1}^n\cF P_{t_{j+1}-t_j}(\xi_1+\ldots+\xi_j) {\bf 1}_{\{0<t_1<\ldots<t_n<t\}}.
\end{equation}
with the convention that $t_{n+1}=t$.
By the definition \eqref{e:inner-prod}  of $\cH$-norm together with \eqref{eq-fn<Fn}, we have
\begin{align*}
    & n! \| \tilde f_n (\cdot,t,x) \|^2_{\cH^{\otimes n}}
    \le n! \| f_n (\cdot,t,x) \|^2_{\cH^{\otimes n}} \nonumber \\
    =& n! \int_{\bR_+^{2n}} \int_{\bR^{2nd}} f_n(t_1,x_1,\ldots,t_n,x_n,t,x) f_n(s_1,y_1,\ldots,s_n,y_n,t,x) \prod_{j=1}^n |t_j-s_j|^{-\beta_0}  \gamma(x_j-y_j) d{\bf x}d{\bf y}d{\bf t} d{\bf s}   \\
    \le& C^{2n} n! \int_{\bR_+^{2n}} \int_{\bR^{2nd}} F_n(t_1,x_1,\ldots,t_n,x_n,t,x) F_n(s_1,y_1,\ldots,s_n,y_n,t,x) \\
    &\qquad \qquad \qquad \qquad \times\prod_{j=1}^n |t_j-s_j|^{-\beta_0}  \gamma(x_j-y_j) d{\bf x}d{\bf y} d{\bf t} d{\bf s}.
\end{align*}
Then, by \eqref{e:P-bound}  and \eqref{e:FFn}
we have
\begin{align*}
    &n! \| \tilde f_n (\cdot,t,x) \|^2_{\cH^{\otimes n}}\\ 
\leq& C^{2n} n! \int_{\bR_+^{2n}}  \prod_{j=1}^n |t_j-s_j|^{-\beta_0} \int_{\bR^{nd}} 
 \cF F_n(t_1,\cdot,\ldots,t_n,\cdot,t,x) (\xi_1,\ldots,\xi_n)\\
    & \qquad \qquad \qquad  \qquad\qquad \qquad  \times \overline{\cF F_n(s_1,\cdot,\ldots,s_n,\cdot,t,x) (\xi_1,\ldots,\xi_n)}\mu(d{\boldsymbol{\xi}}) d{\bf t} d{\bf s}   \\
    =& C^{2n} n! \int_{([0,t]^n_<)^2} \prod_{j=1}^n |t_j-s_j|^{-\beta_0}  \int_{\bR^{nd}} \prod_{j=1}^n \cF P_{t_{j+1}-t_j}(\xi_1+\ldots+\xi_j)  \cF P_{s_{j+1}-s_j}(\xi_1+\ldots+\xi_j)\mu(d\boldsymbol{\xi}) d{\bf t}d{\bf s}  \\
    \le&C^{2n} n! \int_{([0,t]^n_<)^2}  \prod_{j=1}^n |t_j-s_j|^{-\beta_0}   \int_{\R^{nd}}  
 \prod_{j=1}^n \exp \left( -c_2 (t_{j+1}-t_j+s_{j+1}-s_j) \Psi (\xi_1+\ldots+\xi_j) \right)\mu(d\boldsymbol{\xi})d{\bf t}d{\bf s}\\
 \le & C^{2n} n! \int_{([0,t]_<^n)^2}  \prod_{j=1}^n |t_j-s_j|^{-\beta_0}   \int_{\R^{nd}}  
 \prod_{j=1}^n \exp \left( -2c_2 (t_{j+1}-t_j) \Psi (\xi_1+\ldots+\xi_j) \right)\mu(d\boldsymbol{\xi})d{\bf t}d{\bf s},
\end{align*}
{%\blue 
where in the last step we use the the fact $$2\prod_{j=1}^n\exp ( -c_2 (t_{j+1}-t_j+s_{j+1}-s_j))\le \prod_{j=1}^n\exp ( -2c_2 (t_{j+1}-t_j))+ \prod_{j=1}^n\exp ( -2c_2 (s_{j+1}-s_j))$$ and the symmetry of the integral. Taking integral with respect to $d{\bf s}$ and noting that for $u\in[0,t]$, we have $\int_0^t|s-u|^{-\beta_0}ds\le 2\int_0^t s^{-\beta_0}ds := A_t$, we get 
\begin{equation}\label{eq:tilde f}
\begin{aligned}
&n!\|\tilde f_n(\cdot,t,x)\|^2_{\mathcal H^{\otimes n}}\\
&\le C^n A_t^n\int_{[0,t]_<^n}\int_{\R^{nd}}\prod_{j=1}^n\exp \Big( -2c_2 (t_{j+1}-t_{j})\Psi (\xi_1+\dots+\xi_{j}) \Big)\mu(d\boldsymbol{\xi})d{\bf t}\\
&\le e^{2c_2Mt}C^n A_t^n \int_{[r_i\ge0:\, r_1+\cdots+r_n\le t]}\int_{\R^{nd}}\prod_{j=1}^n\exp \Big( -2c_2 r_j\big(M+\Psi (\xi_1+\dots+\xi_{j})\big) \Big)\mu(d\boldsymbol{\xi})d{\bf t}\\
&\le e^{2c_2Mt}C^n A_t^n \int_{\R^{nd}}\prod_{j=1}^n
\frac1{M+\Psi (\xi_1+\dots+\xi_{j})} \mu(d\boldsymbol{\xi})\\
&\le e^{2c_2Mt}C^n A_t^n \left(\int_{\R^{d}}
\frac1{M+\Psi (\xi)} \mu(d{\xi})\right)^n,
\end{aligned}
\end{equation}
where $M$ is any positive number.  Now, we get
\[
\begin{aligned}
\sum_{n\ge0}n!\|\tilde f_n(\cdot,t,x)\|_{\mathcal H^{\otimes n}}^2
&\le e^{2c_2Mt} \sum_{n\ge0} C^n A_t^n \left(\int_{\R^{d}}
\frac1{M+\Psi (\xi)} \mu(d{\xi})\right)^n,
\end{aligned}
\]
and the right-hand side is finite if we choose $M$ sufficiently large, since  $\lim\limits_{M\to\infty} \int_{\R^{d}}
\frac1{M+\Psi (\xi)} \mu(d{\xi})=0$ due to the condition~\eqref{dalang-SKo}. }
\end{proof}

% {\red
% \begin{remark} \label{Rmk-n!||f||}
% For $N \in \bN$, denote 
% \begin{align*}
%    m_N = \mu \left( \{ |\xi| \le N \} \right),~
%    \epsilon_N = \int_{|\xi| \ge N} \dfrac{1}{\Psi(\xi)^{1-\beta_0}} \mu(d\xi), \text{ and }  A_t = 2 \int_0^t r^{-\beta_0} dr.
% \end{align*}
% In the proof of \cite[Theorem 5.3]{Song17}, it was shown that there exists positive constants $C_0$ such that
% \begin{align*}
%    n! \| \tilde f_n (\cdot,t,x) \|^2_{\cH^{\otimes n}}
%    \le A_t^n \sum_{k=0}^n \binom{n}{k} \dfrac{t^k}{k!} m_N^k (C_0\epsilon_N)^{n-k}.
% \end{align*}
% This estimate will be used when showing $u \in \mathbb D^{2,p}$.
% \end{remark}
% }

{%\blue 
Parallel to   \cite[Theorem 7.2]{HNS11}, we have the following result. }

\begin{theorem} \label{Thm-FK-Skorohod}
Assume Assumption \ref{H} and  the Dalang's condition \eqref{dalang-stra}. Then the unique Skorohod solution to \eqref{spde} can be represented by 
\begin{equation}\label{e:FK-sko}
\begin{aligned}
    u(t,x) = &\E_X \Big[ u_0(X_t^x) \exp \Big( \int_0^t \int_{\R^d} \boldsymbol{\delta}(X_{t-r}^x-y) W(dr,dy) \\
    &\qquad \qquad \qquad \quad - \dfrac{1}{2} \int_0^t \int_0^t |r-s|^{-\beta_0} \gamma(X_r^x-X_s^x) drds \Big)\Big].
\end{aligned}
\end{equation}
\end{theorem}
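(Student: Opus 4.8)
The plan is to follow the Feynman-Kac strategy already deployed for the Stratonovich solution in Theorem~\ref{thm:FK}, but now tracking the It\^o--Stratonovich correction term. First I would introduce the regularized noise $\dot W^{\eps,\delta}$ as in \eqref{e:Wed} and consider the Skorohod approximation
\begin{equation*}
 u^{\eps,\delta}(t,x) = \int_{\R^d} p_t^{(x)}(y) u_0(y) dy + \int_0^t \int_{\R^d} p_{t-s}^{(x)}(y) u^{\eps,\delta}(s,y) \diamond \dot W^{\eps,\delta}(s,y) dyds,
\end{equation*}
where $\diamond$ denotes the Wick product. Since the noise $\dot W^{\eps,\delta}$ is smooth in time, the Wick-It\^o correction converts this into an ordinary (Stratonovich-type) equation whose drift is shifted by $-\tfrac12$ times the relevant covariance; the classical Feynman-Kac formula of Appendix~\ref{sec:FK} then yields
\begin{equation*}
 u^{\eps,\delta}(t,x) = \E_X\Big[ u_0(X_t^x) \exp\Big( W(A_{t,x}^{\eps,\delta}) - \tfrac12 \|A_{t,x}^{\eps,\delta}\|_{\mathcal H}^2 \Big)\Big],
\end{equation*}
with $A_{t,x}^{\eps,\delta}$ as in \eqref{A}. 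The Wick exponential $\exp(W(A)-\tfrac12\|A\|_{\mathcal H}^2)$ is the natural object here, and its appearance is precisely why the Skorohod (rather than Stratonovich) solution carries the subtracted Hamiltonian.

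Next I would pass to the limit $(\eps,\delta)\to 0$. By Theorem~\ref{convergence}, $W(A_{t,x}^{\eps,\delta})\to V_{t,x}$ in $L^2(\Omega)$ and $\|A_{t,x}^{\eps,\delta}\|_{\mathcal H}^2 \to \int_0^t\int_0^t |r-s|^{-\beta_0}\gamma(X_r^x-X_s^x)drds$ in $L^1$ (conditionally on $X$), so the integrand inside $\E_X$ converges in probability to
\begin{equation*}
 u_0(X_t^x)\exp\Big( V_{t,x} - \tfrac12 \int_0^t\int_0^t |r-s|^{-\beta_0}\gamma(X_r^x-X_s^x)drds\Big).
\end{equation*}
To upgrade this to convergence of $u^{\eps,\delta}(t,x)$ in $L^2(\Omega)$ and identify the limit as the mild Skorohod solution, I would establish uniform integrability via a moment bound: $\sup_{\eps,\delta}\E[|u^{\eps,\delta}(t,x)|^p]<\infty$ for all $p$. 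Computing the $p$th moment introduces $p$ independent copies $X^{(1)},\dots,X^{(p)}$ of $X$, and the Wick structure cancels the diagonal self-intersection terms $\gamma(X_r^{(i)}-X_s^{(i)})$, leaving only the off-diagonal terms $\gamma(X_r^{(i)}-X_s^{(j)})$ with $i\ne j$; the exponential integrability of these mixed Hamiltonians follows from the same Fourier-analytic argument as in Theorem~\ref{exp'} (applied jointly to the concatenated process), using only Dalang's condition \eqref{dalang-stra} and Assumption~\ref{H}. Finally, since by Theorem~\ref{Thm-solution} the mild Skorohod solution is unique under the weaker condition \eqref{dalang-SKo}, and a fortiori under \eqref{dalang-stra}, it suffices to verify that the limit random field satisfies \eqref{e:mild-Skro}; this is done exactly as in Step~2 of the proof of Theorem~\ref{thm:FK}, using the formula \eqref{formula}, the Skorohod integral bound \eqref{e:formula1}, and the convergence of the Malliavin derivatives.

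The main obstacle I anticipate is the bookkeeping of the Wick correction under the regularization: one must check that the $-\tfrac12\|A_{t,x}^{\eps,\delta}\|_{\mathcal H}^2$ term produced by the Wick-to-ordinary product conversion in the regularized equation exactly matches (in the limit) the $-\tfrac12\int_0^t\int_0^t |r-s|^{-\beta_0}\gamma(X_r^x-X_s^x)drds$ in \eqref{e:FK-sko}, and that no extra boundary terms survive. A second delicate point is the uniform $L^p$ bound: unlike the Stratonovich case where one estimates $\E\exp(p V_{t,x}^{\eps,\delta})$ directly, here one needs the cancellation of self-intersections to even have finiteness, so the combinatorial argument of Theorem~\ref{exp'} must be adapted to keep track of which $\gamma$-factors are diagonal (and hence absent). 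Both of these are, however, routine given the machinery already developed in Sections~\ref{sec:Hamiltonian} and~\ref{sec:FK-str}, and the reference \cite[Theorem 7.2]{HNS11} provides the template.
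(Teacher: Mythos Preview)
Your proposal is essentially the approach the paper intends: the paper itself gives no detailed proof but simply invokes \cite[Theorem 7.2]{HNS11}, and what you describe (regularize the noise, obtain the Wick--exponential Feynman--Kac formula \eqref{eq-FK-approx} for $u^{\eps,\delta}$, pass to the limit using Theorem~\ref{convergence}, and control uniform moments via the off-diagonal computation \eqref{eq-moment-1}) is precisely that template. The paper in fact carries out all of these ingredients explicitly in the proof of Theorem~\ref{Thm-FK-moment}, so you can cite them directly rather than redevelop them.

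One small simplification: for the final identification you propose to verify that the limiting Feynman--Kac field satisfies \eqref{e:mild-Skro} by adapting Step~2 of Theorem~\ref{thm:FK}. That works, but the route in \cite{HNS11} and implicitly in the paper is shorter: show that $u^{\eps,\delta}$ converges in $L^p$ to the unique Skorohod solution $u$ of Theorem~\ref{Thm-solution} (this is the content of ``Step~3 of \cite[Theorem~5.6]{Song17}'' quoted in the proof of Theorem~\ref{Thm-FK-moment}, done by matching chaos expansions), and separately that $u^{\eps,\delta}$ converges to the Feynman--Kac expression; uniqueness of limits then gives \eqref{e:FK-sko} without having to re-run a Step~2-type argument. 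Also, your remark that the cancellation of diagonal terms is needed ``to even have finiteness'' is slightly overstated here: under \eqref{dalang-stra} the diagonal exponentials are already integrable by Theorem~\ref{exp'}, so the cancellation is convenient but not essential for this theorem (it becomes essential only under the weaker \eqref{dalang-SKo}, as in Theorem~\ref{Thm-FK-moment}).
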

In parallel with Proposition \ref{prop:pos}, we also have the strict positivity of the Skorohod solution as a direct consequence of the Feynman--Kac formula \eqref{e:FK-sko}.
\begin{proposition}\label{prop:pos1}
Assume the same conditions as in Proposition \ref{prop:pos}. Then the Skorohod solution is strictly positive a.s. 
\end{proposition}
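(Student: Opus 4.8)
The plan is to deduce Proposition~\ref{prop:pos1} directly from the Feynman--Kac representation \eqref{e:FK-sko} established in Theorem~\ref{Thm-FK-Skorohod}, mimicking the argument behind Proposition~\ref{prop:pos}. Throughout the argument the noise is fixed; that is, we work on the event of full $\bP_W$-probability on which the Hamiltonian $V_{t,x}=W(\mathfrak{\delta}(X^x_{t-\cdot}-\cdot))$ is a well-defined Gaussian random variable conditional on $X$ (Theorem~\ref{convergence}), and on which the It\^o--Stratonovich correction term $\frac12\int_0^t\int_0^t|r-s|^{-\beta_0}\gamma(X^x_r-X^x_s)\,drds$ is finite $\bP_X$-a.s.\ (this finiteness is exactly what condition \eqref{dalang-stra} buys us, cf.\ Theorem~\ref{exp'}).

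First I would observe that the integrand inside $\E_X[\cdot]$ in \eqref{e:FK-sko} is a product of three factors: the initial value $u_0(X^x_t)$, which is non-negative by the standing assumption of Proposition~\ref{prop:pos}; the exponential $\exp(V_{t,x})$, which is strictly positive and finite $\bP_X$-a.s.; and the exponential of the negative correction term $\exp\big(-\frac12\int_0^t\int_0^t|r-s|^{-\beta_0}\gamma(X^x_r-X^x_s)\,drds\big)$, which is strictly positive (and bounded by $1$) precisely because the correction term is finite and non-negative — here we use that $\gamma$ is non-negative and $\beta_0\in[0,1)$. Hence the whole integrand is non-negative, so $u(t,x)\ge 0$ a.s.

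Next I would upgrade this to strict positivity. Since the integrand is non-negative, $\E_X[\cdot]=0$ would force the integrand to vanish $\bP_X$-a.s.; but on the event $\{u_0(X^x_t)>0\}$, which has positive probability by hypothesis, the other two factors are strictly positive $\bP_X$-a.s., so the integrand is strictly positive there. Therefore $\E_X[\cdot]>0$, i.e.\ $u(t,x)>0$, and this holds for $\bP_W$-almost every realization of the noise, which is the claim. A small point worth spelling out is measurability: the positivity event must be stated after conditioning on $X$, and one uses Fubini/Tonelli together with the finiteness results of Theorems~\ref{thm:exp} and \ref{exp'} to ensure all conditional expectations are finite; but this is routine given everything established earlier.

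I do not expect a genuine obstacle here — the proposition is a soft corollary and the proof is essentially identical to that of Proposition~\ref{prop:pos}, the only new ingredient being the harmless extra factor $\exp(-\tfrac12\,\text{correction})\in(0,1]$. If anything, the one subtlety to handle with care is that strict positivity of the correction-term factor requires the correction term to be \emph{finite} (not merely non-negative), and this finiteness is guaranteed only under the stronger condition \eqref{dalang-stra}, which is why Theorem~\ref{Thm-FK-Skorohod} — and hence this proposition — is stated under \eqref{dalang-stra} rather than the weaker \eqref{dalang-SKo}. Accordingly the proof I would write is just two or three lines: invoke \eqref{e:FK-sko}, note the three factors are non-negative with the latter two strictly positive $\bP_X$-a.s., and conclude via the hypothesis $\bP\{u_0(X^x_t)>0\}>0$ exactly as in Proposition~\ref{prop:pos}.
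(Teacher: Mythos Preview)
Your proposal is correct and matches the paper's approach exactly: the paper does not even write out a proof but states that strict positivity is ``a direct consequence of the Feynman-Kac formula \eqref{e:FK-sko},'' in parallel with Proposition~\ref{prop:pos}. Your observation that the extra factor $\exp(-\tfrac12\,\text{correction})\in(0,1]$ requires finiteness of the correction term under \eqref{dalang-stra} is precisely the point.
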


Theorem \ref{Thm-solution} indicates that under the condition \eqref{dalang-SKo} (which is weaker than \eqref{dalang-stra}), there is a unique Skorohod solution. On the other hand,  as shown in \cite[Proposition 3.2]{HNS11}, if the condition \eqref{dalang-stra} is violated, the sequence $V_{t,x}^{\eps, \delta}$ may diverge and hence the term  $\int_0^t \int_{\R^d} \boldsymbol{\delta}(X_{t-r}^x-y) W(dr,dy)$ given in \eqref{V_{t,x}} may not be well-defined. In this situation, Feynman--Kac representation of the form \eqref{e:FK2} for the Skorohod solution  is not available.  However, we do have Feynman--Kac representation for the moments of the Skorohod solution under \eqref{dalang-SKo}.

\begin{theorem}\label{Thm-FK-moment}
Assume  Assumption \ref{H} and  the Dalang's condition \eqref{dalang-SKo}. Then  for  $p \in \bN_+$, we have the following representation for the moments of the  Skorohod solution to \eqref{spde}:
\begin{align}\label{e:FK-mom}
    \bE \left[ u(t,x)^p \right]
    = \bE \left[ \prod_{j=1}^p u_0\big(X_t^{x,(j)}\big) \exp \left( \sum_{1 \le j < k \le p} \int_0^t \int_0^t |r-s|^{-\beta_0} \gamma \left( X_r^{x,(j)} - X_s^{x,(k)} \right) drds \right) \right],
\end{align}
where $X^{x,(1)}, \ldots, X^{x,(p)}$ are i.i.d. copies of $X^x$.
\end{theorem}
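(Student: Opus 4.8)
The plan is to compute the $p$th moment of the chaos expansion \eqref{eq-expansion} directly and match it with the right-hand side of \eqref{e:FK-mom} via a Feynman–Kac representation for the moments, closely following the scheme of \cite[Theorem 5.3 and its consequences]{Song17} and the original treatment in \cite{HNS11}. First I would reduce to the case $u_0\equiv 1$ for notational clarity (the general bounded case is recovered by carrying the factors $u_0(X_t^{x,(j)})$ through unchanged, since they only contribute the terminal multiplicative factor). The starting point is that, by Theorem \ref{Thm-solution}, the solution $u(t,x)=\sum_{n\ge 0} I_n(\tilde f_n(\cdot,t,x))$ converges in $L^2(\Omega)$ and, because each $\tilde f_n$ is built from bounded transition densities, one checks the same estimate \eqref{eq:tilde f} shows $\sum_n p^{n/2}\sqrt{n!}\,\|\tilde f_n\|_{\cH^{\otimes n}}<\infty$ for every $p$, so all moments of $u(t,x)$ are finite and the expansion may be manipulated term by term.

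The computational heart is the product formula for multiple Wiener integrals. Expanding $u(t,x)^p=\prod_{j=1}^p \sum_{n_j} I_{n_j}(\tilde f_{n_j}^{(j)})$, taking $\bE$, and using that $\bE[\prod_j I_{n_j}(g_j)]$ is a sum over pairings of the arguments where each pair contributes an $\cH$-inner product factor $|t-s|^{-\beta_0}\gamma(\cdot-\cdot)$, one obtains a sum, indexed by perfect matchings between the $p$ blocks of "time–space" variables, of integrals of products of transition kernels weighted by $\prod_{\text{pairs}}|t_i-s_i|^{-\beta_0}\gamma(x_i-y_i)$. The symmetrizations $\tilde f_n$ versus $f_n$ combine with the combinatorial factors $n!$ exactly so that one may replace $\tilde f_n$ by the ordered version $f_n$ on the simplex $\{0<t_1<\dots<t_n<t\}$; this is the standard bookkeeping used to pass from \eqref{eq-expansion} to \eqref{eq-fn}. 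On the other side, I would expand the right-hand exponential in \eqref{e:FK-mom} in a Taylor series:
\begin{align*}
\bE\Big[\prod_{j=1}^p u_0(X_t^{x,(j)})\exp\Big(\sum_{j<k}\int_0^t\!\!\int_0^t|r-s|^{-\beta_0}\gamma(X_r^{x,(j)}-X_s^{x,(k)})\,drds\Big)\Big]
=\sum_{m\ge 0}\frac{1}{m!}\,\bE\big[\,\cdots\,\big],
\end{align*}
and use the Markov property of each independent copy $X^{x,(j)}$ together with Assumption \ref{H} and the Chapman–Kolmogorov relation to rewrite each of these expectations as an integral of products of transition densities $p_\cdot^{(\cdot)}$ against $\prod|r_i-s_i|^{-\beta_0}\gamma(\cdot-\cdot)$. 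Ordering the time points within each copy turns this into exactly the same family of simplex integrals that arose from the chaos side, with the $m$ interaction legs playing the role of the $p$-block matching above. Matching the two expansions term by term gives \eqref{e:FK-mom}.

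The main obstacle is twofold. First, one must justify the interchange of $\bE$, the infinite sums over $n_1,\dots,n_p$, and the integrals — i.e. absolute convergence of the whole multi-series of pairing integrals; this is where the estimate \eqref{eq:tilde f} (and its analogue with the extra factor $p$) is used, exactly as in \cite[Theorem 5.3]{Song17}, and where the weaker Dalang condition \eqref{dalang-SKo} suffices because only the single-power kernel $\frac{1}{1+\Psi(\xi)}$ (not its $(1-\beta_0)$-power) appears once the pairings are unwound. Second — and this is the genuinely delicate point — the right-hand side of \eqref{e:FK-mom} must be shown to be finite, i.e. the exponential moment $\bE\exp(\sum_{j<k}\int_0^t\int_0^t|r-s|^{-\beta_0}\gamma(X_r^{x,(j)}-X_s^{x,(k)})drds)$ is finite under \eqref{dalang-SKo} alone; note this is weaker than what Theorem \ref{exp'} gives (which requires \eqref{dalang-stra}), because here the double integral runs over two \emph{independent} copies, so there is no short-time singularity from the diagonal $r=s$ with the same process. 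I would handle this by the same Taylor-expansion/Fourier argument as in the proof of Theorem \ref{exp'}: expand, use the Markov property on each copy to peel off transition densities (bounding $\gamma$ in Fourier via $\int e^{-c_2 u\Psi(\xi)}\mu(d\xi)$), apply the change of variables $r_i=s_i-s_{i-1}$ on the simplex, and observe that because the two legs of each $\gamma$ live on different copies there is no $|r-s|^{-\beta_0}$ weight tying a time increment to \emph{itself}; the resulting series is summable by Stirling's formula and $\int_{\R^d}\frac{1}{1+\Psi(\xi)}\mu(d\xi)<\infty$. This verifies that both sides are finite and equal, completing the proof.
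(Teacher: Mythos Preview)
Your proposal is essentially correct and would lead to a valid proof, but it takes a genuinely different route from the paper. The paper does \emph{not} match the chaos expansion of $u(t,x)^p$ against the Taylor expansion of the right-hand side via the diagram formula. Instead it argues by regularization: it introduces the smoothed Skorohod equation \eqref{approximate1}, whose solution $u^{\eps,\delta}$ admits the explicit Feynman--Kac representation \eqref{eq-FK-approx}; the Gaussian identity $\bE_W\big[\exp\big(W(f)-\tfrac12\|f\|_\cH^2\big)\big]=1$ then gives directly
\[
\bE\big[(u^{\eps,\delta}(t,x))^p\big]=\bE_X\Big[\exp\Big(\sum_{j<k}\big\langle A^{\eps,\delta,(j)}_{t,x},\,A^{\eps,\delta,(k)}_{t,x}\big\rangle_\cH\Big)\Big],
\]
and the result follows by letting $(\eps,\delta)\to 0$, using (i) $L^p$-convergence $u^{\eps,\delta}\to u$, (ii) $L^1$-convergence of the cross inner products \eqref{eq-convergence-inner prod}, and (iii) the uniform exponential bound \eqref{eq-bound-inner prod}. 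Your combinatorial matching avoids the approximation machinery entirely and works purely at the level of the chaos coefficients, exploiting that $f_n$ is the joint density of $(X^x_{t-t_1},\dots,X^x_{t-t_n})$; the price is that the diagram bookkeeping must be carried out explicitly, whereas the paper hides it inside a single Gaussian moment computation. Both approaches ultimately rest on the same ``delicate point'' you correctly isolate: the finiteness of the off-diagonal exponential moment under \eqref{dalang-SKo} alone. The paper proves this as the claim \eqref{eq-bound-inner prod} via Lemma~\ref{Lem-prod-1} in exactly the way you sketch---conditioning on one independent copy makes the bound on $\bE\big[\prod_l\gamma(X^{(j)}_{s_l}-X^{(k)}_{r_l})\big]$ independent of the $r_l$'s, so the $|r-s|^{-\beta_0}$ weight integrates out to $A_t^m$ separately from the $\exp(-c_2(s_l-s_{l-1})\Psi(\xi_l))$ factors, and the resulting series involves $\int(M+\Psi(\xi))^{-1}\mu(d\xi)$ rather than its $(1-\beta_0)$-power. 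One small caution: in your matching step, Assumption~\ref{H} is used only for the integrability bounds justifying the interchanges; the term-by-term \emph{equality} of the two expansions uses only Chapman--Kolmogorov on the exact transition densities $p_\cdot^{(\cdot)}$, so keep those roles separate when you write it up.
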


\begin{proof}
Without loss of generality, we assume $u_0(x) \equiv 1$. Similar to \eqref{approximate}, we consider the following approximation  of \eqref{spde}:
\begin{equation}\label{approximate1}
\left\{\begin{aligned}
\frac{\partial u^{\varepsilon,\delta}(t,x)}{\partial t}=&\mathcal {L} u^{\varepsilon,\delta}(t,x)+u^{\varepsilon,\delta}(t,x) 
\diamond\dot{W}^{\varepsilon,\delta}(t,x),\,\,\ t\ge 0,\, x\in \mathbb R^d,\\
 u^{\varepsilon,\delta}(0,x)=&u_0(x),\,\,\, x\in\mathbb R^d, 
		\end{aligned}
	\right.
	\end{equation}
 where the symbol $\diamond$ means Wick product and $\dot W^{\eps,\delta}$ is given in \eqref{e:Wed}.
The mid Skorohod solution $u^{\eps,\delta}(t,x)$ is given by the following integral equation
\begin{align*}
    u^{\eps,\delta}(t,x)
    =& 1 + \int_0^t \int_0^t \int_{\bR^{2d}}  p^{(x)}_{t-s}(y) \varphi_{\delta}(s-r) q_{\eps}(y-z) u^{\eps,\delta}(s,y) \diamond W(dr,dz) dsdy, 
\end{align*}
where the integral on the right-hand side is in the Skorohod sense. 
Recall the notation $A_{t,x}^{\eps,\delta}$ given by \eqref{A}.
By a similar argument used in Step 1 in the proof of \cite[Theorem 5.6]{Song17} (see also \cite[Proposition 5.2]{HN09}), we can show that $W(A_{t,x}^{\eps,\delta})$ is well-defined and  the following Feynman--Kac formula holds:
\begin{align} \label{eq-FK-approx}
    u^{\eps,\delta}(t,x)
    = \bE_X \left[ \exp \left( W \left( A_{t,x}^{\eps,\delta} \right) - \dfrac{1}{2} \left\| A_{t,x}^{\eps,\delta} \right\|_{\cH}^2 \right) \right].
\end{align}
As in Step 3 of the proof of  \cite[Theorem 5.6]{Song17}, we can demonstrate that $u^{\eps,\delta}(t,x)$ converges to $u(t,x)$ in $L^p$ for all $p>0$ as $(\eps,\delta) \to 0$.
By the formula \eqref{eq-FK-approx}, we can explicitly calculate the $p$th moment of $u^{\eps,\delta}(t,x)$:
\begin{align} \label{eq-moment-1}
    \bE \left[ \left( u^{\eps,\delta}(t,x) \right)^p \right]
    =& \bE \Bigg[ \exp \Bigg( W \bigg( \sum_{j=1}^p A^{\eps,\delta,(j)}_{t,x} \bigg) - \dfrac{1}{2} \sum_{j=1}^p \left\| A^{\eps,\delta,(j)}_{t,x} \right\|_{\cH}^2 \Bigg) \Bigg] \nonumber \\
    =& \bE_X \Bigg[ \exp \Bigg( \dfrac{1}{2} \bigg\| \sum_{j=1}^p A^{\eps,\delta,(j)}_{t,x} \bigg\|_{\cH}^2 - \dfrac{1}{2} \sum_{j=1}^p \left\| A^{\eps,\delta,(j)}_{t,x} \right\|_{\cH}^2 \Bigg) \Bigg] \nonumber \\
    =& \bE_X \Bigg[ \exp \Bigg( \sum_{1 \le j<k \le p} \left\langle A^{\eps,\delta,(j)}_{t,x}, A^{\eps,\delta,(k)}_{t,x} \right\rangle_{\cH} \Bigg) \Bigg],
\end{align}
where $A_{t,x}^{\eps,\delta,(j)}$ is given by \eqref{A} with $X$ being replaced by $X^{(j)}$, an independent copy of $X$, and $\bE_X$ is the expectation with respect to  $X^{(1)}, \ldots, X^{(p)}$.
Therefore, employing the same technique as in the proof of Theorem \ref{convergence}, we can establish that
\begin{align} \label{eq-convergence-inner prod}
    \lim_{\eps,\delta \to 0} \left\langle A^{\eps,\delta,(j)}_{t,x}, A^{\eps,\delta,(k)}_{t,x} \right\rangle_{\cH}
    = \int_{0}^t\int_{0}^t |r_1-r_2|^{-\beta_0} \gamma \left( X_{r_2}^{x,(j)} - X_{r_1}^{x,(k)} \right) dr_1dr_2,
\end{align}
in the sense of  $L^1$-convergence. We claim that under Dalang's condition \eqref{dalang-SKo}, it holds that for all $\lambda>0$,
\begin{align} \label{eq-bound-inner prod}
    \sup_{\epsilon,\delta>0} \bE \left[ \exp \left( \lambda \left\langle A^{\eps,\delta,(j)}_{t,x}, A^{\eps,\delta,(k)}_{t,x} \right\rangle_{\cH} \right) \right] < \infty, \, j\neq k.
\end{align}
Then the desired result \eqref{e:FK-mom} follows from \eqref{eq-moment-1}, \eqref{eq-convergence-inner prod} and \eqref{eq-bound-inner prod}.

To conclude the proof, we prove the claim \eqref{eq-bound-inner prod}. By \eqref{A} and Lemma \ref{l2}, we have
\begin{align*}
    \left\langle A^{\eps,\delta,(j)}_{t,x}, A^{\eps,\delta,(k)}_{t,x} \right\rangle_{\cH}
    \le& C \int_{0}^t\int_{0}^t |s-r|^{-\beta_0} drds \int_{\bR^{2d}} q_{\eps} (X_s^{x,(j)}-y) q_{\eps} (X_r^{x,(k)}-z) \gamma(y-z) dydz \\
    =& C \int_{0}^t\int_{0}^t |s-r|^{-\beta_0} drds \int_{\bR^{2d}} q_{\eps} (y) q_{\eps} (z) \gamma(X_s^{x,(j)}-X_r^{x,(k)}-y+z) dydz.
\end{align*}
%{\blue where $X_s^{(j),x}=X_s^{(j)}+x$. }
By Taylor expansion, we get
\begin{equation}\label{eq-Eexp-inner}
\begin{aligned}
    &\bE \left[ \exp \left( \lambda \left\langle A^{\eps,\delta,(j)}_{t,x}, A^{\eps,\delta,(k)}_{t,x} \right\rangle_{\cH} \right) \right] \\
   & \le 1 + \sum_{m=1}^{\infty} \dfrac{\lambda^m C^m}{m!} \int_{[0,t]^{2m}}   d{\bf r}d{\bf s} \, \prod_{l=1}^m |s_l-r_l|^{-\beta_0}  \\
    & \qquad \times \int_{\bR^{2md}} \prod_{l=1}^m q_{\eps} (y_l)q_{\eps}(z_l) \bE \left[ \prod_{l=1}^m \gamma(X_{s_l}^{x,(j)}-X_{r_l}^{x,(k)}-y_l+z_l) \right] d{\bf y} d{\bf z}.
\end{aligned}
\end{equation}

To compute the expectation in \eqref{eq-Eexp-inner}, we apply Lemma \ref{Lem-prod-1} below to the  conditional expectation given $X^{(k)}$ and obtain that, for $0=s_0< s_1<\cdots, s_m<t$,
\begin{equation}\label{e:prod-gamma-X-X}
    \bE \left[ \prod_{l=1}^m \gamma(X_{s_l}^{x,(j)}-X_{r_l}^{x,(k)}-y_l+z_l) \right]
    \le C^m c_1^m \int_{\bR^{md}} \prod_{l=1}^m \exp \left(-c_2 (s_l-s_{l-1}) \Psi(\xi_l) \right) \mu(d\xi_1) \ldots \mu(d\xi_m).
\end{equation}
 Note that \eqref{e:prod-gamma-X-X} is independent of $y_1, \ldots, y_m, z_1, \ldots, z_m$ and $r_1,\ldots, r_m$, and  hence, 
 to estimate \eqref{eq-Eexp-inner}, we can integrate with respect to $dy_1\ldots dy_m dz_1 \ldots dz_m$ first then  to $dr_1\ldots dr_m$   using the fact 
\begin{equation}
\label{eq-integrate-s-t}
\int_{[0,t]^n} \prod_{j=1}^n |s_j-r_j|^{-\beta_0} d{\bf r}
    \le A_t^n,\, \text{ with } A_t := 2 \int_0^t |s|^{-\beta_0} ds<\infty,
    \end{equation}
 to obtain
\begin{align*}
    &\bE \left[ \exp \left( \lambda \left\langle A^{\eps,\delta,(j)}_{t,x}, A^{\eps,\delta,(k)}_{t,x} \right\rangle_{\cH} \right) \right] \\
    &\le 1 + \sum_{m=1}^{\infty} \dfrac{\lambda^m C^{m} c_1^m}{m!} \int_{[0,t]^{2m}}  d{\bf s}d{\bf r}\prod_{l=1}^m |s_l-r_l|^{-\beta_0}  \int_{\bR^{md}} \prod_{l=1}^m \exp \left(-c_2 (s_l-s_{l-1}) \Psi(\xi_l) \right)\mu(d\boldsymbol{\xi}) \\
    &\le 1 + \sum_{m=1}^{\infty} A_t^m \lambda^m C^{m} c_1^m  \int_{ [0,t]_<^m} d{\bf s}
    \int_{\bR^{md}} \prod_{l=1}^m \exp \left(-c_2 (s_l-s_{l-1}) \Psi(\xi_l) \right)\mu(d\boldsymbol{\xi}).
\end{align*}
Then we can use the same technique leading to \eqref{eq-exp} to prove the convergence of the series. Applying the change of variable $\tau_l=s_l-s_{l-1}$ for $l=1, \dots, m$ and  recalling the notation $\Sigma_t^m=[0<\tau_1+\cdots+\tau_m <t]\cap\R_+^m$, we have
\begin{align} \label{ineq-exp-nondiag}
&\bE \left[ \exp \left( \lambda \left\langle A^{(\eps,\delta,j)}_{t,x}, A^{(\eps,\delta,k)}_{t,x} \right\rangle_{\cH} \right) \right] \nonumber \\
    &= 1+  \sum_{m=1}^{\infty} A_t^m \lambda^m C^{m} c_1^m \int_{\Sigma_t^{m}} d{\boldsymbol{\tau}}  \int_{\R^{md}}  \boldsymbol{\mu}(d\boldsymbol{\xi})  \exp\left(-c_2\sum_{i=1}^m r_i \Psi(\xi_i) \right)  \nonumber   \\
&\le  1+ e^{c_2Mt } \sum_{m=1}^{\infty}A_t^m \lambda^m C^{m} c_1^m  \int_{\Sigma_t^{m}} d{\boldsymbol{\tau}}    \int_{\R^{md}}  \boldsymbol{\mu}(d\boldsymbol{\xi})  \exp\left(-c_2\sum_{i=1}^m \tau_i (M+\Psi(\xi_i)) \right) \nonumber \\
  &\le 1+ e^{c_2Mt} \sum_{m=1}^{\infty} A_t^m \lambda^m C^{m} c_1^m \left(\int_0^t d\tau   \int_{\R^{d}} \mu(d\xi)  \exp\left(-c_2 \tau (M+\Psi(\xi)) \right)    \right)^m \nonumber \\
   & \le1+ e^{c_2Mt} \sum_{m=1}^{\infty} A_t^m \lambda^m C^{m} c_1^m \left(  \int_{\R^{d}} \frac{1}{M+\Psi(\xi)} \mu(d\xi)\right)^m. 
\end{align}
Thus, by the Dalang's condition \eqref{dalang-SKo}, we can find $M$ sufficiently large such that the above series converges.  
\end{proof}

In the above proof, we have shown that the Skorohod solution $u(t,x)$ is a limit of $u^{\eps,\delta}$ given by \eqref{eq-FK-approx} which is obviously positive. This yields the non-negativity of $u(t,x)$ given non-negative initial condition as stated in Proposition~\ref{prop:pos2} below. We remark that under the weaker condition \eqref{dalang-SKo}, it is still not clear whether $u(t,x)>0$ a.s. (see Proposition~\ref{prop:pos1} for a comparison).
\begin{proposition}\label{prop:pos2}
Assume Assumption \ref{H} and the Dalang's condition \eqref{dalang-SKo}. Assume further that $u_0(x)$ is a non-negative function. Then  the  Skorohod solution $u(t,x)$ to \eqref{spde} is non-negative a.s. 
\end{proposition}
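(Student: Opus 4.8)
The plan is to deduce the non-negativity of the Skorohod solution from the approximation scheme already set up in the proof of Theorem \ref{Thm-FK-moment}, combined with the uniqueness statement of Theorem \ref{Thm-solution}. First I would recall that for $\eps,\delta>0$ the approximate Skorohod solution $u^{\eps,\delta}(t,x)$ admits the Feynman--Kac representation \eqref{eq-FK-approx}, namely
\[
u^{\eps,\delta}(t,x)=\E_X\left[u_0(X_t^x)\exp\left(W(A_{t,x}^{\eps,\delta})-\tfrac12\|A_{t,x}^{\eps,\delta}\|_{\mathcal H}^2\right)\right]
\]
(for general bounded non-negative $u_0$, not just $u_0\equiv1$; the derivation of \eqref{eq-FK-approx} via the classical Feynman--Kac formula in Appendix \ref{sec:FK} goes through verbatim with the factor $u_0(X_t^x)$ carried along). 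Since $u_0\ge0$ and the exponential is strictly positive, we get $u^{\eps,\delta}(t,x)\ge 0$ almost surely for every $\eps,\delta>0$.

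Next I would invoke the convergence $u^{\eps,\delta}(t,x)\to u(t,x)$ in $L^p(\Omega)$ for all $p>0$ as $(\eps,\delta)\to0$, which is stated in the proof of Theorem \ref{Thm-FK-moment} (following Step 3 of the proof of \cite[Theorem 5.6]{Song17}); this identifies the limit with the unique mild Skorohod solution $u(t,x)$ of \eqref{eq-expansion} by Theorem \ref{Thm-solution}. Here one must be slightly careful: the convergence argument in Theorem \ref{Thm-FK-moment} is written under the stronger Dalang condition \eqref{dalang-stra}, whereas Proposition \ref{prop:pos2} only assumes \eqref{dalang-SKo}. So the honest route is to note that $L^2$-convergence of $u^{\eps,\delta}(t,x)$ to $u(t,x)$ under \eqref{dalang-SKo} already follows from the chaos-expansion estimates in the proof of Theorem \ref{Thm-solution}: the chaos coefficients of $u^{\eps,\delta}$ converge to $\tilde f_n(\cdot,t,x)$ in $\mathcal H^{\otimes n}$, and the bound \eqref{eq:tilde f} provides a summable dominating series uniform in $(\eps,\delta)$, so dominated convergence in the chaos decomposition gives $u^{\eps,\delta}(t,x)\to u(t,x)$ in $L^2(\Omega)$. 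Along a subsequence $(\eps_k,\delta_k)\to0$ the convergence is almost sure, and since each $u^{\eps_k,\delta_k}(t,x)\ge0$ a.s., the limit satisfies $u(t,x)\ge0$ a.s.

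The main obstacle is precisely this regularity-condition mismatch: one needs to justify the $L^2$-convergence of the Wick-product approximations to the Skorohod solution under the weaker condition \eqref{dalang-SKo}, without appealing to the Feynman--Kac-based $L^p$ estimates which genuinely require \eqref{dalang-stra}. I would handle this by showing the chaos coefficients of $u^{\eps,\delta}(t,x)$ are exactly $p_{t-t_n}^{(x)}(x_n)\cdots$ convolved against the approximate-identity kernels $\varphi_\delta, q_\eps$, so that $\|\tilde f_n^{\eps,\delta}(\cdot,t,x)\|_{\mathcal H^{\otimes n}}^2$ is bounded by the same expression as in \eqref{eq:tilde f} up to constants uniform in $(\eps,\delta)$ (using $\hat q_\eps\le 1$ and $\int\varphi_\delta=1$), and $f_n^{\eps,\delta}\to f_n$ pointwise; then the convergence and non-negativity pass to the limit. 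Once this is in place, the proof is a two-line argument: approximants are non-negative, they converge in $L^2$, hence a.s.\ along a subsequence, hence the limit is non-negative.
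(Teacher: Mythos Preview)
Your proposal is correct and follows essentially the same approach as the paper: the paper's entire proof is the one-sentence remark immediately preceding the proposition, namely that in the proof of Theorem~\ref{Thm-FK-moment} the Skorohod solution $u(t,x)$ was obtained as the $L^p$-limit of the approximants $u^{\eps,\delta}$ in \eqref{eq-FK-approx}, which are manifestly non-negative when $u_0\ge0$.

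One clarification: your concern about a ``regularity-condition mismatch'' is unfounded. Theorem~\ref{Thm-FK-moment} is stated and proved under the weaker Dalang condition~\eqref{dalang-SKo}, not~\eqref{dalang-stra}; the $L^p$-convergence $u^{\eps,\delta}\to u$ asserted there (by reference to Step~3 of \cite[Theorem~5.6]{Song17}) is therefore already claimed under~\eqref{dalang-SKo}. Your chaos-expansion backup argument is a perfectly valid independent verification of this convergence, but it is not needed to close a gap.
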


The following lemma has been used in the proof of Theorem \ref{Thm-FK-moment}.

\begin{lemma} \label{Lem-prod-1}
For  $0 =r_0< r_1 < \cdots < r_m < \infty$, we have 
\begin{align*}
\sup_{(a_1,\dots,a_m)\in\R^m} \bE \Bigg[ \prod_{i=1}^m \gamma \left( X_{r_i}^x - a_i \right) \Bigg]
    \le C^m c_1^m \int_{\bR^{md}} \prod_{j=1}^m \exp \Big(-c_2 (r_j-r_{j-1}) \Psi(\xi_j) \Big) \mu(d{\boldsymbol{\xi}}).
\end{align*}
\end{lemma}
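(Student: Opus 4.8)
The plan is to peel off the factors $\gamma(X_{r_i}^x-a_i)$ one at a time, from the largest time $r_m$ downwards, using the Markov property of $X^x$ exactly as in the derivation of \eqref{e:prod-gamma-X} in the proof of Theorem~\ref{exp'}; the only change is that the random shifts $X_{t_i^-}$ appearing there are now the deterministic constants $a_i$, which if anything simplifies the bookkeeping. Fix $(a_1,\dots,a_m)\in\R^m$ and let $\{\mathcal F_t\}$ denote the natural filtration of $X^x$. Since $\prod_{i=1}^{m-1}\gamma(X_{r_i}^x-a_i)$ is $\mathcal F_{r_{m-1}}$-measurable,
\begin{align*}
\bE\Bigg[\prod_{i=1}^m\gamma(X_{r_i}^x-a_i)\Bigg]
=\bE\Bigg[\Bigg(\prod_{i=1}^{m-1}\gamma(X_{r_i}^x-a_i)\Bigg)\,\bE\big[\gamma(X_{r_m}^x-a_m)\,\big|\,\mathcal F_{r_{m-1}}\big]\Bigg].
\end{align*}

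The key point is that the inner conditional expectation admits a deterministic bound that is uniform both in $a_m$ and in the current position $X_{r_{m-1}}^x$. By the Markov property, Assumption~\ref{H} with the substitution $z=y-X_{r_{m-1}}^x$, and the Parseval--Plancherel identity,
\begin{align*}
\bE\big[\gamma(X_{r_m}^x-a_m)\,\big|\,\mathcal F_{r_{m-1}}\big]
&=\int_{\R^d}p_{r_m-r_{m-1}}^{(X_{r_{m-1}}^x)}(y)\,\gamma(y-a_m)\,dy
\le c_0\int_{\R^d}P_{r_m-r_{m-1}}(z)\,\gamma\big(z+X_{r_{m-1}}^x-a_m\big)\,dz\\
&\le Cc_1\int_{\R^d}\exp\big(-c_2(r_m-r_{m-1})\Psi(\xi_m)\big)\,\hat\gamma(\xi_m)\,e^{-\iota\xi_m\cdot(X_{r_{m-1}}^x-a_m)}\,d\xi_m\\
&\le Cc_1\int_{\R^d}\exp\big(-c_2(r_m-r_{m-1})\Psi(\xi_m)\big)\,\mu(d\xi_m),
\end{align*}
where the second inequality uses \eqref{e:P-bound}, and the last one uses $|e^{-\iota\xi_m\cdot(X_{r_{m-1}}^x-a_m)}|\le1$ together with $\hat\gamma(\xi)\,d\xi=\mu(d\xi)$ (here $C$ absorbs $c_0$ and the normalization constant from Parseval). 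Substituting this deterministic bound back leaves $\bE\big[\prod_{i=1}^{m-1}\gamma(X_{r_i}^x-a_i)\big]$ times a constant.

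Iterating the same argument $m$ times — conditioning successively on $\mathcal F_{r_{m-1}},\mathcal F_{r_{m-2}},\dots,\mathcal F_{r_1}$, with the convention $r_0=0$ used at the last step — yields
\begin{align*}
\bE\Bigg[\prod_{i=1}^m\gamma(X_{r_i}^x-a_i)\Bigg]
\le C^mc_1^m\int_{\R^{md}}\prod_{j=1}^m\exp\big(-c_2(r_j-r_{j-1})\Psi(\xi_j)\big)\,\mu(d\boldsymbol{\xi}),
\end{align*}
and since the right-hand side does not depend on $(a_1,\dots,a_m)$, the supremum over $\R^m$ is immediate. There is no real obstacle here: the statement is a routine reprise of the estimate \eqref{e:prod-gamma-X}, and the only point that needs attention is to take the Fourier bound in absolute value at each step, so that every intermediate estimate is uniform in the shift $a_i$ and in the position of $X^x$ — this uniformity is precisely what makes the telescoping through the filtration legitimate and what trivializes the final supremum.
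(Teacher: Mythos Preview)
Your proof is correct and follows essentially the same approach as the paper's own proof: condition on $\mathcal F_{r_{m-1}}$, bound the innermost conditional expectation via Assumption~\ref{H} and Parseval--Plancherel to obtain a deterministic factor independent of the shift and the current position, then iterate down to $r_0=0$. The paper presents the identical argument with the same chain of inequalities.
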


\begin{proof}
The proof essentially follows the argument leading to \eqref{e:prod-gamma-X}. By Markov property of $X$, we have
\begin{align*} 
    \bE \left[ \prod_{i=1}^m \gamma \left( X_{r_i}^x - a_i \right) \right]
    =& \bE \left[ \prod_{i=1}^{m-1} \gamma \left( X_{r_i}^x - a_i \right) \bE \left[ \gamma \left( X_{r_m}^x - a_m \right) \big| \cF_{r_{m-1}} \right] \right] \nonumber \\
    =& \bE \left[ \prod_{i=1}^{m-1} \gamma \left( X_{r_i}^x - a_i \right) \bE \left[ \gamma \left( X_{r_m}^x - a_m \right) \big| X_{r_{m-1}}^x \right] \right] \nonumber \\
    =& \bE \left[ \prod_{i=1}^{m-1} \gamma \left( X_{r_i}^x - a_i \right) \int_{\bR^d} \gamma (y_m-a_m) p_{r_m-r_{m-1}}^{(X_{r_{m-1}}^x)}(y_m) dy_m \right]. 
\end{align*}
Applying assumption \ref{H}  and Parseval-Plancherel identity yields that 
    \begin{align*}
  \int_{\bR^d} \gamma (y_m-a_m) p_{r_m-r_{m-1}}^{(X_{r_{m-1}}^x)}(y_m) dy_m  \le& c_0 \int_{\bR^d} \gamma (y_m-a_m) P_{r_m-r_{m-1}}(y_m-X^x_{r_{m-1}}) dy_m  \\
    =&  c_0\int_{\bR^d} \exp (-i(X^x_{r_{m-1}}-a_m) \cdot \xi_m) \hat P_{r_m-r_{m-1}}(\xi_m) \mu(d\xi_m)  \\
    \le& C  \int_{\bR^d} \exp \big(-c_2 (r_m-r_{m-1}) \Psi(\xi_m) \big) \mu(d\xi_m).
\end{align*} 
Combining the above calculations, we get
\[ \bE \left[ \prod_{i=1}^m \gamma \left( X^x_{r_i} - a_i \right) \right]\le  C \bE \left[ \prod_{i=1}^{m-1} \gamma \left( X^x_{r_i} - a_i \right) \right] \int_{\bR^d} \exp \big(-c_2 (r_m-r_{m-1}) \Psi(\xi_m) \big) \mu(d\xi_m).\]
The proof is then concluded by repeating this procedure. 
\end{proof}
\subsection{Continuity and H\"older continuity}\label{sec:holder2}
Let $u(t,x)$ be the unique mild Skorohod solution to~\eqref{spde}. In this subsection, we study the continuity and H\"{o}lder continuity of $u(t,x)$. For simplicity, we assume that $u_0(x) \equiv 1$. In this case, the chaos decomposition \eqref{eq-expansion} of $u(t,x)$  holds with
\begin{align*}
    f_n(t_1,x_1,\ldots,t_n,x_n,t,x)
    = p^{(x_n)}_{t-t_n}(x) \ldots p^{(x_1)}_{t_2-t_1}(x_2) \times {\bf 1}_{\{0<t_1<\ldots<t_n<t\}}.
\end{align*}
In this section, we use the convention $p_t(x):=p_t^{(0)}(x)$.  We have the following maximal principle. \begin{lemma} \label{Lem-max}
For any $a \in \bR^d$, $t \in \bR_+$, we have
\begin{align*}
    \int_{\bR^d} \left| \cF p_t (\xi+a) \right|^2 \mu(d\xi)
    \le \int_{\bR^d} \left| \cF p_t (\xi) \right|^2 \mu(d\xi).
\end{align*}
\end{lemma}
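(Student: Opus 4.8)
The plan is to use Parseval's identity with respect to the spectral measure $\mu$ to convert both sides into double integrals over space, and then to conclude by the trivial pointwise bound $\cos\le 1$ together with the non-negativity of $p_t$ and $\gamma$.

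First I would record the elementary shift identity $\mathcal F p_t(\xi+a)=\mathcal F h_a(\xi)$, where $h_a(x):=e^{-\iota a\cdot x}p_t(x)$. Applying the Parseval--Plancherel relation underlying the inner product \eqref{e:inner-prod} (equivalently, the defining property of $\mu$ as the spectral measure of $\gamma$), I would rewrite
\[
\int_{\R^d}\big|\mathcal F p_t(\xi)\big|^2\,\mu(d\xi)=\int_{\R^{2d}}p_t(x)p_t(y)\,\gamma(x-y)\,dxdy ,
\]
and, since $h_a(x)\overline{h_a(y)}=e^{-\iota a\cdot(x-y)}p_t(x)p_t(y)$,
\[
\int_{\R^d}\big|\mathcal F p_t(\xi+a)\big|^2\,\mu(d\xi)=\int_{\R^{2d}}e^{-\iota a\cdot(x-y)}p_t(x)p_t(y)\,\gamma(x-y)\,dxdy .
\]
The left-hand side of the second line is a non-negative real number, being the integral of $|\mathcal F p_t(\cdot+a)|^2\ge 0$ against the positive measure $\mu$; hence the double integral on the right equals its own real part, namely $\int_{\R^{2d}}\cos\!\big(a\cdot(x-y)\big)p_t(x)p_t(y)\gamma(x-y)\,dxdy$ (one sees the same directly from the symmetry $x\leftrightarrow y$ and the evenness of $\gamma$). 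Since $p_t\ge 0$, $\gamma\ge 0$ and $\cos\le 1$ pointwise, this is at most $\int_{\R^{2d}}p_t(x)p_t(y)\gamma(x-y)\,dxdy$, which is exactly the assertion. If that last quantity is $+\infty$ the inequality is vacuous, so there is no integrability issue to worry about.

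The only point that is not immediate is the justification of the Parseval identity for $h_a=e^{-\iota a\cdot(\cdot)}p_t$, since $p_t$ is known a priori only to be an integrable probability density dominated by $P_t$. I expect to handle this exactly as the analogous computations elsewhere in the paper: write $|\mathcal F p_t|^2=\mathcal F(p_t\ast\widetilde p_t)$ with $\widetilde p_t(x)=p_t(-x)$, note that $g_t:=p_t\ast\widetilde p_t\in L^1(\R^d)$ is non-negative with non-negative Fourier transform $\mathcal F g_t=|\mathcal F p_t|^2$, and then extend the relation $\int_{\R^d}\widehat\varphi\,\mu(d\xi)=\int_{\R^d}\varphi\,\gamma\,dx$ from Schwartz test functions $\varphi$ to $\varphi=g_t$ (and to $e^{-\iota a\cdot(\cdot)}g_t$) by an approximation argument against the tempered measure $\mu$. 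This is the main, though routine, obstacle; everything else reduces to $\cos\le 1$.
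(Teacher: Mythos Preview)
Your proof is correct and follows essentially the same route as the paper: apply the Parseval--Plancherel identity to rewrite both sides as spatial double integrals against $p_t(x)p_t(y)\gamma(x-y)$, and then use $|e^{-\iota a\cdot(x-y)}|\le 1$ (equivalently $\cos\le 1$) together with the non-negativity of $p_t$ and $\gamma$. You are simply more explicit than the paper about the approximation step justifying Parseval for $p_t$, which the paper takes for granted.
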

\begin{proof}
By Parseval–Plancherel identity, we have 
\begin{align*}
     \int_{\bR^d} \left| \cF p_t (\xi+a) \right|^2 \mu(d\xi)&=\int_{\R^{2d}} p_t(x) e^{-\iota x\cdot a} p_t(y) e^{-\iota y\cdot a} \gamma(x-y) dx dy \\
     &\le \int_{\R^{2d}} p_t(x) p_t(y) \gamma(x-y) dx dy=\int_{\bR^d} \left| \cF p_t (\xi) \right|^2 \mu(d\xi).
\end{align*}
\end{proof}
\begin{theorem}\label{continuity of u}
Assume assumption \ref{H} and the Dalang's condition \eqref{dalang-SKo}. Then for $p\geq 2$, the Skorohod solution $u(t,x)$ to \eqref{spde} is $L^p(\Omega)$-continuous  in $t$ and $x$.
\end{theorem}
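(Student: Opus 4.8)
The plan is to run the argument through the Wiener chaos expansion \eqref{eq-expansion}--\eqref{eq-fn} and to reduce the $L^p(\Omega)$-continuity to the continuity of each chaos coefficient $f_n(\cdot,t,x)$ in $\mathcal H^{\otimes n}$, the convergence in $n$ being controlled by the norm estimate already obtained in the proof of Theorem~\ref{Thm-solution}. Since $I_n(\tilde f_n(\cdot,t,x))-I_n(\tilde f_n(\cdot,t',x'))=I_n\big(\tilde f_n(\cdot,t,x)-\tilde f_n(\cdot,t',x')\big)$ lies in the $n$th chaos, hypercontractivity (see \cite{Nualart}) gives $\|I_n(g)\|_{L^p(\Omega)}\le (p-1)^{n/2}(n!)^{1/2}\|g\|_{\mathcal H^{\otimes n}}$ for $g\in\mathcal H^{\tilde\otimes n}$, so that, using the triangle inequality together with the fact that symmetrization is a contraction on $\mathcal H^{\otimes n}$,
\begin{align*}
\big\|u(t,x)-u(t',x')\big\|_{L^p(\Omega)}\le\sum_{n=0}^{\infty}(p-1)^{n/2}(n!)^{1/2}\,\big\|f_n(\cdot,t,x)-f_n(\cdot,t',x')\big\|_{\mathcal H^{\otimes n}}.
\end{align*}
It therefore suffices to show (i) that this series has a convergent majorant uniform for $(t,x),(t',x')$ in any fixed compact $K\subset[0,\infty)\times\R^d$, and (ii) that each summand tends to $0$ as $(t',x')\to(t,x)$; the claim then follows by dominated convergence for series. (Alternatively one may first prove $L^2(\Omega)$-continuity and upgrade to $L^p$ by interpolating against the higher moments, which are finite and locally bounded by Theorem~\ref{Thm-FK-moment}.)

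For (i) I would reuse the bound \eqref{eq:tilde f}: for $t\le T$, $n!\|f_n(\cdot,t,x)\|_{\mathcal H^{\otimes n}}^2\le A_T^n\sum_{k=0}^n\binom nk\frac{T^k}{k!}m_N^k(C_0\epsilon_N)^{n-k}$ with $A_T,m_N,\epsilon_N$ as in \eqref{e:meD}, uniformly in $x$. Carrying out the same summation as in the proof of Theorem~\ref{Thm-solution} but keeping the extra weight $(p-1)^n$ (and using $\binom nk\le 2^n$), one checks that $\sum_n(p-1)^{n/2}(n!)^{1/2}\sup_{(s,z)\in K}\|f_n(\cdot,s,z)\|_{\mathcal H^{\otimes n}}<\infty$ as soon as $N$ is taken large enough that $\epsilon_N$ is small, which is possible since $\epsilon_N\to0$ under \eqref{dalang-SKo}. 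As $\|f_n(\cdot,t,x)-f_n(\cdot,t',x')\|_{\mathcal H^{\otimes n}}\le 2\sup_K\|f_n(\cdot,s,z)\|_{\mathcal H^{\otimes n}}$, this gives the required majorant.

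For (ii) I would prove that for each fixed $n$ the map $(t,x)\mapsto f_n(\cdot,t,x)$ is continuous into $\mathcal H^{\otimes n}$. Writing $f_n$ through the nested transition densities of $X$ and passing to the spectral representation of the $\mathcal H^{\otimes n}$-norm, continuity in $t$ reduces to the continuity of $\tau\mapsto\cF p_\tau^{(\cdot)}$ (equivalently, to the strong continuity of the Feller semigroup $\{P_\tau\}$) together with the fact that the simplex $\{0<t_1<\dots<t_n<t\}$ changes by a Lebesgue-null set as $t$ varies, while continuity in $x$ reduces to continuity of the transition function of $X$ in its space argument, available from the Feller property together with the pointwise domination in Assumption~\ref{H}. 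In both cases the integrand in the spectral representation converges to $0$ pointwise and is dominated by twice the integrand estimated in the proof of Theorem~\ref{Thm-solution}---here the maximal principle of Lemma~\ref{Lem-max} is used to bound the $\mathcal H^{\otimes n}$-norms of the shifted kernels arising when one compares transitions with different terminal points---so that dominated convergence applies.

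The main obstacle I expect is step (ii), and within it the continuity in the spatial variable when $X$ is genuinely non-translation-invariant: one must extract from Assumption~\ref{H} and the Feller property enough regularity of $x\mapsto p_t^{(x)}(\cdot)$ (rather than merely its pointwise bound by $P_t$) to justify the dominated convergence, and it is precisely here that Lemma~\ref{Lem-max} enters to keep the shifted spectral norms controlled. In the translation-invariant case the argument simplifies markedly: the $x$-dependence of $\cF f_n$ is then a phase of modulus one, so $\|f_n(\cdot,t,x)\|_{\mathcal H^{\otimes n}}$ does not depend on $x$ and (ii) follows from dominated convergence with no further input; alternatively, in that case one can bypass the chaos expansion altogether and use the moment identity of Theorem~\ref{Thm-FK-moment} together with the uniform exponential integrability \eqref{eq-bound-inner prod}.
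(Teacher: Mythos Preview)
Your approach coincides with the paper's: hypercontractivity together with the bound \eqref{eq:tilde f} gives the uniform majorant, and continuity of each chaos term is then proved separately (the paper isolates this as Lemma~\ref{continuity of I}). For step~(ii) the paper is more explicit than your sketch: in space, passing to the Fourier side turns the increment into the factor $|1-e^{-\iota(\xi_1+\cdots+\xi_n)\cdot z}|^2$, after which Lemma~\ref{Lem-max} furnishes the dominating function exactly as you anticipate; in time, the paper splits $\tilde f_n(\cdot,t+h,x)-\tilde f_n(\cdot,t,x)$ into the piece on $[0,t]^n$ (where only the last kernel $\cF p_{t+h-t_n}-\cF p_{t-t_n}$ changes) and the piece on $[0,t+h]^n\setminus[0,t]^n$ (whose indicator vanishes as $h\to0$), which makes your ``null-simplex'' remark precise. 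Your closing concern is well placed: extracting the phase factor $|1-e^{-\iota(\xi_1+\cdots+\xi_n)\cdot z}|$ relies on $p_t^{(x)}(y)=p_t(y-x)$, and the paper's computation in Lemma~\ref{continuity of I} uses this translation-invariant form without further comment, so the genuinely non-translation-invariant case is not addressed there either.
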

\begin{proof}For $p\geq 2$ and $(t,x)\in [0,T]\times \R^d$, by using Minkowski's inequality and hypercontractivity (see, e.g., \cite[Theorem 1.4.1]{Nualart}), we have 
\begin{align}\label{hyper}
    \|u(t,x)\|_{L^p}&\leq\sum_{n\geq 0}\|I_n(\tilde f_n(\cdot,t,x))\|_{L^p}\nonumber\\&\leq \sum_{n\geq 0}(p-1)^{\frac{n}{2}}\|I_n(\tilde f_n(\cdot,t,x))\|_{L^2}\\
    &=\sum_{n\geq 0}(p-1)^{\frac{n}{2}}(n!\|\tilde f_n(\cdot,t,x)\|_{\cH^{\otimes n}}^2)^{\frac 12}.\nonumber
\end{align}
 {Thus,  using \eqref{eq:tilde f}, we get
\begin{align*}
    \|u(t,x)\|_{L^p}
    &\le \sum_{n\geq 0} (p-1)^{\frac n2} e^{c_2Mt} (CA_t)^{\frac n2} \left(\int_{\R^d}\frac1{M+\Psi(\xi)}\mu(d\xi)\right)^{n/2}< \infty,
\end{align*}
if we choose $M$ sufficiently  large.} Then we have for any $T>0$, $p\geq 2$, there exists a constant $C_{T,p}$ such that
\begin{align*}
\sum_{n\geq 0}\sup_{(t,x)\in [0,T]\times \R^d}\|I_n(\tilde f_n(\cdot,t,x))\|_{L^p}\leq C_{T,p}<\infty.
\end{align*}

Hence, the sequence $\big\{u_n(t,x)=\sum_{k=0}^n I_k(\tilde f_k(\cdot,t,x))\big\}_{n\in\mathbb N}$ converges to $u(t,x)$ in $L^p(\Omega)$ as $n\to\infty$, uniformly in $(t,x)\in [0,T]\times \R^d$. By Lemma \ref{continuity of I} below, $I_n(\tilde f_n(\cdot,t,x))$ is $L^p(\Omega)$-continuous, and hence $u_n$
is $L^p(\Omega)$-continuous. Therefore,  $u$ is $L^p(\Omega)$-continuous.
\end{proof}

\begin{lemma}\label{continuity of I} Assume assumption \ref{H} and the Dalang's condition \eqref{dalang-SKo}.
For any $p\geq 2$, $n\geq 1$, $t>0$ and $x\in\R^d$,  we have
\begin{align}\label{spatial continuity}
\lim_{|z|\to0}\E|I_n(\tilde f_n(\cdot,t,x+z))-I_n(\tilde f_n(\cdot,t,x))|^p=0,
\end{align}
and 
\begin{align}\label{time continuity}
\lim_{h\to 0}\E|I_n(\tilde f_n(\cdot,t+h,x))-I_n(\tilde f_n(\cdot,t,x))|^p= 0.
\end{align}

\end{lemma}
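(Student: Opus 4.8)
The plan is to reduce to $L^2(\Omega)$-continuity and then establish continuity of the chaos kernels in $\cH^{\otimes n}$ by a dominated-convergence argument built on the estimates already used to prove Theorem~\ref{Thm-solution}.

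Since $I_n(\tilde f_n(\cdot,t,x+z))-I_n(\tilde f_n(\cdot,t,x))=I_n\big(\tilde f_n(\cdot,t,x+z)-\tilde f_n(\cdot,t,x)\big)$ lies in the $n$th Wiener chaos, hypercontractivity (\cite[Theorem~1.4.1]{Nualart06}, as used in \eqref{hyper}) gives $\|I_n(g)\|_{L^p(\Omega)}\le(p-1)^{n/2}\|I_n(g)\|_{L^2(\Omega)}$; hence, by \eqref{e:ImIn}, it suffices to prove
\[\lim_{|z|\to0}n!\,\|\tilde f_n(\cdot,t,x+z)-\tilde f_n(\cdot,t,x)\|_{\cH^{\otimes n}}^2=0\qquad\text{and}\qquad\lim_{h\to0}n!\,\|\tilde f_n(\cdot,t+h,x)-\tilde f_n(\cdot,t,x)\|_{\cH^{\otimes n}}^2=0,\]
and, since symmetrization is a contraction on $\cH^{\otimes n}$, it is enough to prove the same with $\tilde f_n$ replaced by $f_n$.

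The next step is to expand $\|f_n(\cdot,t,x+z)-f_n(\cdot,t,x)\|_{\cH^{\otimes n}}^2$ and $\|f_n(\cdot,t+h,x)-f_n(\cdot,t,x)\|_{\cH^{\otimes n}}^2$ via \eqref{e:inner-prod}. Only the factor of $f_n$ carrying the terminal pair $(t,x)$ (the factor $p_{t-t_n}^{(x)}(x_n)$ together with the indicator $\mathbf{1}_{\{t_n<t\}}$, see \eqref{eq-fn}) depends on $(t,x)$, so the increment of the kernel equals $[p_{t-t_n}^{(x+z)}(x_n)-p_{t-t_n}^{(x)}(x_n)]$ — respectively $[p_{t+h-t_n}^{(x)}(x_n)\mathbf{1}_{\{t_n<t+h\}}-p_{t-t_n}^{(x)}(x_n)\mathbf{1}_{\{t_n<t\}}]$ — times the unchanged part. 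Using $0\le f_n\le C^nF_n$ with $F_n$ as in \eqref{eq-Fn}, $\gamma\ge0$, and $|a-b|\le a+b$ for $a,b\ge0$, the absolute value of the integrand is dominated by a finite sum of products of $F_n$-kernels against $\prod_j|t_j-s_j|^{-\beta_0}\gamma(x_j-y_j)$; after passing to the spectral variables as in the proof of Theorem~\ref{Thm-solution}, these majorizing integrals are finite by the same computation that yields \eqref{eq:tilde f}, and by translation invariance of $P$ together with the uniform bound $\widehat P_s(\xi)\le c_1$ they converge, as $z\to0$ (resp.\ $h\to0$), to the diagonal value $\|F_n(\cdot,t,x)\|_{\cH^{\otimes n}}^2$ — so the shrinking terminal slab $\{t\le t_n<t+h\}$ costs nothing. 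Since the integrand itself tends to $0$ pointwise — because $p_s^{(x)}(y)$ is continuous in $x$ and in $s>0$ for the Feller processes under Assumption~\ref{H} that we have in mind (diffusions with jointly continuous heat kernel, L\'evy processes), while $\mathbf{1}_{\{t_n<t+h\}}\to\mathbf{1}_{\{t_n<t\}}$ a.e. — the generalized dominated convergence theorem delivers the two displayed limits, hence \eqref{spatial continuity} and \eqref{time continuity}. (Equivalently, one may pass first to the spectral side via the second identity in \eqref{e:inner-prod}, where the $x$-dependence collapses to the phase $e^{-\iota(\xi_1+\cdots+\xi_n)\cdot x}$ and the spatial increment simply carries the factor $1-\cos\big((\xi_1+\cdots+\xi_n)\cdot z\big)\to0$.)

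The hard part will be the time increment: unlike the spatial one it changes both the domain of integration (nested simplices of different sizes) and the terminal increment $t-t_n\mapsto t+h-t_n$, so no genuinely $h$-independent dominating function is at hand. The resolution is that the relevant majorizing integrals are absolutely convergent — this is exactly the estimate underlying Theorem~\ref{Thm-solution} — which both legitimizes the limit passage and forces the contribution of the new time-slab to vanish. A secondary, routine point is the continuity of $p_t^{(x)}(y)$ in $(x,t)$ for $t>0$, used only for the pointwise convergence of the integrands.
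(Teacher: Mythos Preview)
Your reduction via hypercontractivity to $n!\|\tilde f_n(\cdot,t,x+z)-\tilde f_n(\cdot,t,x)\|_{\cH^{\otimes n}}^2\to0$ (and its time analogue) is correct and matches the paper. After that, the execution has genuine gaps. Your primary line --- dominate via $|a-b|\le a+b$ and $0\le f_n\le C^nF_n$, then invoke a generalized dominated convergence theorem using pointwise convergence of the integrand --- rests on continuity of $(s,x)\mapsto p_s^{(x)}(y)$, which is \emph{not} part of Assumption~\ref{H}; you acknowledge this by specializing to ``diffusions with jointly continuous heat kernel, L\'evy processes'', but that is adding a hypothesis. The paper avoids this by carrying out the entire computation on the spectral side (your parenthetical remark is the right starting point): there the $x$-dependence of $\tilde f_n$ is the phase $e^{-\iota(\xi_1+\cdots+\xi_n)\cdot x}$, so the spatial increment produces the factor $|1-e^{-\iota(\xi_1+\cdots+\xi_n)\cdot z}|^2$, bounded by $4$ and going to $0$ pointwise. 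The substance is then to exhibit a $z$-\emph{independent} dominating function, and the paper does this by (i) bounding $\cF g(\mathbf t,\boldsymbol\xi)\overline{\cF g(\mathbf s,\boldsymbol\xi)}\le|\cF g(\mathbf t,\boldsymbol\xi)|^2$ and integrating out $\mathbf s$ at cost $A_t^n$, and then (ii) applying the maximal principle of Lemma~\ref{Lem-max} to strip the shifts in $|\cF p_{t_{j+1}-t_j}(\xi_1+\cdots+\xi_j)|^2$. These two steps are the technical core of the argument and are absent from your sketch.

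For the time increment you do not actually give a proof; the closing paragraph is a statement of difficulty followed by an assertion. The paper's argument is concrete: it splits $\tilde f_n(\cdot,t+h,x)-\tilde f_n(\cdot,t,x)$ into the piece supported on $[0,t]^n$ and the piece on $[0,t+h]^n\setminus[0,t]^n$, giving the two terms $A_n(t,h)$ and $B_n(t,h)$ in \eqref{A_n}--\eqref{B_n}, and treats each by the same spectral/Cauchy--Schwarz/Lemma~\ref{Lem-max} scheme. Standard DCT then applies: for $A_n$ the integrand contains $|\cF[p_{t+h-t_n}-p_{t-t_n}](\xi_1+\cdots+\xi_n)|^2\to0$ (continuity of $t\mapsto\cF p_t(\xi)$, a much milder requirement than pointwise density continuity), while for $B_n$ it is $\mathbf 1_{[0,t+h]^n\setminus[0,t]^n}\to0$ that does the work.
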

\begin{proof}
For $x,z\in \R^d$, by the hypercontractivity used in \eqref{hyper}, we can obtain that
\begin{align*}
&\|I_n(\tilde f_n(\cdot,t,x+z))-I_n(\tilde f_n(\cdot,t,x))\|^2_{L^p}\\&\leq (p-1)^n\|I_n(\tilde f_n(\cdot,t,x+z))-I_n(\tilde f_n(\cdot,t,x))\|^2_{L^2}\\&=(p-1)^n n!\|\tilde f_n(\cdot,t,x+z)-\tilde f_n(\cdot,t,x)\|_{\cH^{\otimes n}}^2.
\end{align*}
Note that
\begin{align}\label{spatial-c}
&n!\|\tilde f_n(\cdot,t,x+z)-\tilde f_n(\cdot,t,x)\|_{\cH^{\otimes n}}^2\nonumber\\&=n!\int_{[0,t]^{2n}}\prod_{j=1}^n|t_j-s_j|^{-\beta_0}\int_{\R^{nd}}\cF[\tilde f_n(\mathbf{t},\cdot,t,x+z)-\tilde f_n(\mathbf{t},\cdot,t,x)](\xi_1,\cdots,\xi_n)\nonumber\\&\hspace{2em}\times\overline{\cF[\tilde f_n(\mathbf{s},\cdot,t,x+z)-\tilde f_n(\mathbf{s},\cdot,t,x)](\xi_1,\cdots,\xi_n)}\mu(d\xi_1)\cdots\mu(d\xi_n)d\mathbf{t}d\mathbf{s}\nonumber\\&\leq n!\int_{[0,t]^{2n}}\int_{\R^{nd}}\big|\cF[\tilde f_n(\mathbf{t},\cdot,t,x+z)-\tilde f_n(\mathbf{t},\cdot,t,x)](\xi_1,\cdots,\xi_n)\big|^2\mu(d\xi_1)\cdots\mu(d\xi_n)\nonumber\\&\hspace{6em}\times \prod_{j=1}^n|t_j-s_j|^{-\beta_0}d\mathbf{t}d\mathbf{s}\\&\leq n!A_t^n \int_{[0,t]^n} \int_{\R^{nd}}\big|\cF[\tilde f_n(\mathbf{t},\cdot,t,x+z)-\tilde f_n(\mathbf{t},\cdot,t,x)](\xi_1,\cdots,\xi_n)\big|^2\mu(d\xi_1)\cdots\mu(d\xi_n) d\mathbf{t}\nonumber\\&=\frac{1}{n!}A_t^n \int_{[0,t]^n}\int_{\R^{nd}}|\cF p_{t_{\tau(2)}-t_{\tau(1)}}(\xi_{\tau(1)})|^2\cdots|\cF p_{t_{\tau(n)}-t_{\tau(n-1)}}(\xi_{\tau(1)}+\cdots+\xi_{\tau(n-1)})|^2\nonumber\\&\hspace{6em}\times |\cF p_{t-t_{\tau(n)}}(\xi_{\tau(1)}+\cdots+\xi_{\tau(n)})|^2|1-e^{-\iota(\xi_1+\cdots+\xi_n)\cdot z}|^2\mu(d\xi_1)\cdots\mu(d\xi_n)d\mathbf{t}\nonumber\\&=A_t^n \int_{\{0<t_1<\cdots<t_n<t\}}\int_{\R^{nd}}|\cF p_{t_2-t_1}(\xi_1)|^2\cdots|\cF p_{t_n-t_{n-1}}(\xi_{1}+\cdots+\xi_{n-1})|^2\nonumber\\&\hspace{10em}\times |\cF p_{t-t_{n}}(\xi_{1}+\cdots+\xi_{n})|^2|1-e^{-\iota(\xi_1+\cdots+\xi_n)\cdot z}|^2\mu(d\xi_1)\cdots\mu(d\xi_n)d\mathbf{t},\nonumber
\end{align}
where $\tau \in S_n$ is the permutation such that $0<t_{\tau(1)}<\ldots<t_{\tau(n)}<t$ and $A_t$ is given in \eqref{eq-integrate-s-t}.  By the maximum principle in Lemma \ref{Lem-max} and Assumption~\ref{H},  the right-hand side of \eqref{spatial-c} is bounded by
 \begin{align*}
&A_t^n \int_{\{0<t_1<\cdots<t_n<t\}} \int_{\R^{nd}}\prod_{j=1}^{n}|\cF p_{t_{j+1}-t_j}(\xi_j)|^2\mu(d\xi_1)\cdots\mu(d\xi_n)d\mathbf{t}\\&=A_t^n \int_{\{0<t_1<\cdots<t_n<t\}} \int_{\R^{2nd}}\prod_{j=1}^{n}p_{t_{j+1}-t_j}(y_j)p_{t_{j+1}-t_j}(z_j)\gamma(y_j-z_j)d\mathbf y d\mathbf zd\mathbf{t}\\&\leq c_0^{2n}A_t^n \int_{\{0<t_1<\cdots<t_n<t\}} \int_{\R^{nd}}\prod_{j=1}^{n}|\cF P_{t_{j+1}-t_j}(\xi_j)|^2\mu(d\xi_1)\cdots\mu(d\xi_n)d\mathbf{t}\\& \leq c_0^{2n}c_1^{2n} A_t^n \int_{\{0<t_1<\cdots<t_n<t\}} \int_{\R^{nd}}\prod_{j=1}^{n}\exp\big(-c_2(t_{j+1}-t_{j})\Psi(\xi_{j})\big)\mu(d\xi_1)\cdots\mu(d\xi_n)d\mathbf{t}<\infty,
\end{align*}
with $t_{n+1}=t$, where the last inequality follows  from  \cite[Proposition 3.5]{Song17} with $\beta_0=0$. 
Noting that on the right-hand side of \eqref{spatial-c},  $|1-e^{-\iota(\xi_1+\cdots+\xi_n)\cdot z}|\to 0$ as $|z|\to 0$,  the dominated convergence theorem yields the desired  \eqref{spatial continuity}.

Now we prove \eqref{time continuity}.
For $h\in(0,1)$, by hypercontractivity we get
\begin{align*}
&\|I_n(\tilde f_n(\cdot,t+h,x))-I_n(\tilde f_n(\cdot,t,x))\|_{L^p}^2\\&\leq (p-1)^n\|I_n(\tilde f_n(\cdot,t+h,x))-I_n(\tilde f_n(\cdot,t,x))\|_{L^2}^2\\&=(p-1)^n n!\|\tilde f_n(\cdot,t+h,x)-\tilde f_n(\cdot,t,x)\|_{\cH^{\otimes n}}^2\\&\leq 2(p-1)^n(A_n(t,h)+B_n(t,h)),
\end{align*}
where 
\begin{align}
&A_n(t,,h)=n!\|\tilde f_n(\cdot,t+h,x) {\bf 1}_{[0,t]^n}-\tilde f_n(\cdot,t,x)\|_{\cH^{\otimes n}}^2,\label{A_n}\\
&B_n(t,,h)=n!\|\tilde f_n(\cdot,t+h,x) {\bf 1}_{[0,t+h]^n\setminus[0,t]^n}\|_{\cH^{\otimes n}}^2.\label{B_n}
\end{align}
Firstly, we estimate $A_n(t,h)$,\begin{align*}
A_n(t,h)
&=n!\int_{[0,t]^{2n}}\prod_{j=1}^n|t_j-s_j|^{-\beta_0}\int_{\R^{nd}}\cF[\tilde f_n(\mathbf{t},\cdot,t+h,x)-\tilde f_n(\mathbf{t},\cdot,t,x)](\xi_1,\cdots,\xi_n)\\&\hspace{2em}\times\overline{\cF[\tilde f_n(\mathbf{s},\cdot,t+h,x)-\tilde f_n(\mathbf{s},\cdot,t,x)](\xi_1,\cdots,\xi_n)}\mu(d\xi_1)\cdots\mu(d\xi_n)d\mathbf{t}d\mathbf{s}\\&\leq n!\int_{[0,t]^{2n}}\int_{\R^{nd}}\big|\cF[\tilde f_n(\mathbf{t},\cdot,t+h,x)-\tilde f_n(\mathbf{t},\cdot,t,x)](\xi_1,\cdots,\xi_n)\big|^2\mu(d\xi_1)\cdots\mu(d\xi_n)\\&\hspace{6em}\times \prod_{j=1}^n|t_j-s_j|^{-\beta_0}d\mathbf{t}d\mathbf{s}\\&\leq n!A_t^n \int_{[0,t]^n} \int_{\R^{nd}}\big|\cF[\tilde f_n(\mathbf{t},\cdot,t+h,x)-\tilde f_n(\mathbf{t},\cdot,t,x)](\xi_1,\cdots,\xi_n)\big|^2\mu(d\xi_1)\cdots\mu(d\xi_n) d\mathbf{t}\\
%with $\Gamma_t=2\int_0^t \gamma(s)ds$.
&=\frac{1}{n!}A_t^n \int_{[0,t]^n}\int_{\R^{nd}}|\cF p_{t_{\tau(2)}-t_{\tau(1)}}(\xi_{\tau(1)})|^2\cdots|\cF p_{t_{\tau(n)}-t_{\tau(n-1)}}(\xi_{\tau(1)}+\cdots+\xi_{\tau(n-1)})|^2\\&\hspace{6em}\times |\cF[ p_{t+h-t_{\tau(n)}}-p_{t-t_{\tau(n)}}](\xi_{\tau(1)}+\cdots+\xi_{\tau(n)})|^2\mu(d\xi_1)\cdots\mu(d\xi_n)d\mathbf{t}\\&=A_t^n \int_{\{0<t_1<\cdots<t_n<t\}}\int_{\R^{nd}}|\cF p_{t_2-t_1}(\xi_1)|^2\cdots|\cF p_{t_n-t_{n-1}}(\xi_{1}+\cdots+\xi_{n-1})|^2\\&\hspace{10em}\times |\cF[ p_{t+h-t_n}-p_{t-t_{n}}](\xi_{1}+\cdots+\xi_{n})|^2\mu(d\xi_1)\cdots\mu(d\xi_n)d\mathbf{t}.
\end{align*}
By Lemma \ref{Lem-max} and  Assumption \ref{H}, we get
\begin{align*}
A_n(t,h)&\leq A_t^n \int_{\{0<t_1<\cdots<t_n<t\}} \prod_{j=1}^{n-1}\int_{\R^d}|\cF p_{t_{j+1}-t_j}(\xi_j)|^2\mu(d\xi_j)\\&\hspace{4em}\times \int_{\R^d}(|\cF p_{t+h-t_n}(\xi_n)|^2+|\cF p_{t-t_{n}}(\xi_{n})|^2)\mu(d\xi_n)d\mathbf{t}\\
&\leq 2 A_t^n \int_{\{0<t_1<\cdots<t_n<t\}} \int_{\R^{nd}}\prod_{j=1}^{n}\exp\big(-c_2(t_{j+1}-t_{j})\Psi(\xi_{j})\big)\mu(d\xi_1)\cdots\mu(d\xi_n)d\mathbf{t}<\infty,
\end{align*}
with $t_{n+1}=t$.  By the continuity of the function $t\to \cF p_t(\xi)$ and the dominated convergence theorem, we can deduce that $A_n(t,h)\to 0$ as $h\to 0$.

As for the term $B_n(t,h)$,
\begin{align*}
B_n(t,h)&=n!\int_{[0,t+h]^{2n}}\prod_{j=1}^n|t_j-s_j|^{-\beta_0}\int_{\R^{nd}}\cF \tilde f_n(\mathbf{t},\cdot,t+h,x)(\xi_1,\cdots,\xi_n){\bf 1}_{[0,t+h]^n\setminus[0,t]^n}(\mathbf{t})\\&\hspace{2em}\times\overline{\cF \tilde f_n(\mathbf{s},\cdot,t+h,x)(\xi_1,\cdots,\xi_n)}{\bf 1}_{[0,t+h]^n\setminus[0,t]^n}(\mathbf{s})\mu(d\xi_1)\cdots\mu(d\xi_n)d\mathbf{t}d\mathbf{s}\\&\leq n!\int_{[0,t+h]^{2n}}\int_{\R^{nd}}|\cF \tilde f_n(\mathbf{t},\cdot,t+h,x)(\xi_1,\cdots,\xi_n)|^2{\bf 1}_{[0,t+h]^n\setminus[0,t]^n}(\mathbf{t})\\&\hspace{3em}\times \prod_{j=1}^n|t_j-s_j|^{-\beta_0}\mu(d\xi_1)\cdots\mu(d\xi_n)d\mathbf{t}d\mathbf{s}\\&\leq n!A_{t+h}^n \int_{[0,t+h]^{n}}\int_{\R^{nd}}|\cF \tilde f_n(\mathbf{t},\cdot,t+h,x)](\xi_1,\cdots,\xi_n)|^2{\bf 1}_{[0,t+h]^n\setminus[0,t]^n}(\mathbf{t})\mu(d\xi_1)\cdots\mu(d\xi_n)d\mathbf{t}\\&=\frac{1}{n!}A_{t+h}^n \int_{[0,t+h]^n}\int_{\R^{nd}}|\cF p_{t_{\tau(2)}-t_{\tau(1)}}(\xi_{\tau(1)})|^2\cdots|\cF p_{t_{\tau(n)}-t_{\tau(n-1)}}(\xi_{\tau(1)}+\cdots+\xi_{\tau(n-1)})|^2\\&\hspace{6em}\times |\cF p_{t+h-t_{\tau(n)}}(\xi_{\tau(1)}+\cdots+\xi_{\tau(n)})|^2\mu(d\xi_1)\cdots\mu(d\xi_n){\bf 1}_{[0,t+h]^n\setminus[0,t]^n}(\mathbf{t})d\mathbf{t}\\&=A_{t+h}^n \int_{\{0<t_1<\cdots<t_n<t+h\}}\int_{\R^{nd}}|\cF p_{t_2-t_1}(\xi_1)|^2\cdots|\cF p_{t_n-t_{n-1}}(\xi_{1}+\cdots+\xi_{n-1})|^2\\&\hspace{10em}\times |\cF p_{t+h-t_n}(\xi_{1}+\cdots+\xi_{n})|^2\mu(d\xi_1)\cdots\mu(d\xi_n){\bf 1}_{[0,t+h]^n\setminus[0,t]^n}(\mathbf{t})d\mathbf{t}\\&\leq A_{t+h}^n\int_{\{0<t_1<\cdots<t_n<t+h\}} \int_{\R^{nd}}\prod_{j=1}^{n}\exp\big(-c_2(t_{j+1}-t_{j})\Psi(\xi_{j})\big)\mu(d\xi_1)\cdots\mu(d\xi_n)\\&\hspace{4em}\times {\bf 1}_{[0,t+h]^n\setminus[0,t]^n}(\mathbf{t})d\mathbf{t}<\infty,
\end{align*}
with $t_{n+1}=t+h$. By the dominated convergence theorem and the fact that  ${\bf 1}_{[0,t+h]^n\setminus[0,t]^n}(\mathbf{t})\to 0$ as $h\to 0$,  we have $B_n(t,h)\to 0$ as $h\to 0$. Then \eqref{time continuity} follows.
\end{proof}
The following result follows from direct calculation and can be found, e.g., in \cite[Lemma 3.10]{Song17}. 
\begin{lemma}\label{useful equality}
Suppose $\alpha_i<1$ for $i=1,\cdots,n$
 and let $\alpha=\sum_{i=1}^n\alpha_i$. Then,
 \begin{align*}
 \int_{\{0<t_1<\cdots<t_n<t_{n+1}=t\}}\prod_{i=1}^n (t_{i+1}-t_i)^{-\alpha_i}d\mathbf{t}=\frac{\prod_{i=1}^n\Gamma(1-\alpha_i)}{\Gamma(n-\alpha+1)}t^{n-\alpha}.
 \end{align*}
 
 \end{lemma}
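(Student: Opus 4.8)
The plan is to prove, by induction on $n$, the slightly more general identity: for every $\lambda>-1$,
\[
J_n(t;\lambda):=\int_{\{0<t_1<\cdots<t_n<t_{n+1}=t\}} t_1^{\lambda}\prod_{i=1}^n (t_{i+1}-t_i)^{-\alpha_i}\,d\mathbf{t}
=\frac{\Gamma(\lambda+1)\prod_{i=1}^n\Gamma(1-\alpha_i)}{\Gamma(\lambda+n+1-\alpha)}\,t^{\lambda+n-\alpha},
\]
and then to read off the statement as the case $\lambda=0$ (using $\Gamma(1)=1$). Since $\alpha_i<1$, each factor $(t_{i+1}-t_i)^{-\alpha_i}$ is locally integrable and the integrand is nonnegative, so Tonelli's theorem lets us iterate the one-dimensional integrations freely; moreover $\alpha=\sum_i\alpha_i<n$, hence $\lambda+n+1-\alpha>0$ and the Gamma factor in the denominator is finite and nonzero.

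The one input I would use repeatedly is the Beta integral: for $a>0$, $\mu>-1$, $\nu<1$, the substitution $s=a\sigma$ gives
\[
\int_0^a s^{\mu}(a-s)^{-\nu}\,ds = a^{\mu+1-\nu}\,B(\mu+1,1-\nu)=\frac{\Gamma(\mu+1)\,\Gamma(1-\nu)}{\Gamma(\mu+2-\nu)}\,a^{\mu+1-\nu}.
\]
Taking $a=t$, $\mu=\lambda$, $\nu=\alpha_1$ in this identity is exactly the case $n=1$ of $J_n$, which settles the base case.

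For the inductive step, assume the formula for $n-1$. In $J_n(t;\lambda)$ I would integrate out the innermost variable $t_1$ over $(0,t_2)$ using the Beta integral with $a=t_2$, $\mu=\lambda$, $\nu=\alpha_1$; this pulls out the constant $\Gamma(\lambda+1)\Gamma(1-\alpha_1)/\Gamma(\lambda+2-\alpha_1)$ and leaves $t_2^{\lambda+1-\alpha_1}$ inside. Relabeling $s_j:=t_{j+1}$ for $1\le j\le n-1$ (with $s_n:=t$) and $\alpha'_j:=\alpha_{j+1}$, the remaining integral is precisely $J_{n-1}(t;\lambda')$ with $\lambda':=\lambda+1-\alpha_1>-1$ and $\sum_j\alpha'_j=\alpha-\alpha_1$. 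Plugging in the induction hypothesis, the factor $\Gamma(\lambda+2-\alpha_1)=\Gamma(\lambda'+1)$ cancels between the Beta constant and the numerator of $J_{n-1}$, the Gamma product reassembles to $\prod_{i=1}^n\Gamma(1-\alpha_i)$, the exponent of $t$ becomes $(\lambda+1-\alpha_1)+(n-1)-(\alpha-\alpha_1)=\lambda+n-\alpha$, and the denominator becomes $\Gamma(\lambda+n+1-\alpha)$, which is the desired formula at level $n$.

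There is no serious obstacle here: the only things to watch are the bookkeeping of the arguments of $\Gamma$ in the cancellation, and the observation that the admissible range $\lambda>-1$ is preserved under $\lambda\mapsto\lambda+1-\alpha_1$ (true because $\alpha_1<1$). If one prefers to avoid the auxiliary parameter, the same result drops out of the change of variables $v_0=t_1$, $v_i=t_{i+1}-t_i$ for $1\le i\le n$, which has unit Jacobian and turns the integral into the Dirichlet integral $\int_{\{v_i>0,\ v_0+\cdots+v_n=t\}}\prod_{i=1}^n v_i^{-\alpha_i}\,d\sigma$ over the scaled $n$-simplex, equivalently the $(n+1)$-fold convolution of $\mathbf{1}_{(0,\infty)}$ with $s_+^{-\alpha_1},\dots,s_+^{-\alpha_n}$; iterating the Beta identity then yields the stated value.
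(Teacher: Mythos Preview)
Your proof is correct. The paper itself does not supply a proof of this lemma; it merely states that the result ``follows from direct calculation'' and cites Lemma~3.10 of Song (2017). Your induction on $n$ via the Beta integral, with the auxiliary weight $t_1^{\lambda}$ inserted so that the inductive step closes up cleanly, is a standard and complete way to carry out that calculation; the alternative you mention (the change of variables $v_0=t_1$, $v_i=t_{i+1}-t_i$ reducing to a Dirichlet integral on the simplex) is equally valid and is in fact the more common textbook route.
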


 Now, we are ready to obtain the H\"{o}lder continuity of the Skorohod solution under some additional conditions.
\begin{theorem} \label{Thm-Skorohod-Holder-space}
%Assume $p_t^{(x)}(y)=p_t(y-x)$. 
Assume \ref{H} and condition \eqref{dalang-SKo}. Furthermore,  suppose that there exist $\alpha_1, \alpha_2, \beta \in (0,1)$ and $C>0$  such that for all $T>0$ and $z \in \bR^d$, the following conditions  hold:
\begin{align} \label{cond-space-Holder}
    \sup_{t\in[0,T]}\sup_{\eta \in \bR^d} \int_{\bR^d} \left| 1 - e^{-\iota (\xi+\eta)\cdot z} \right|^2 \left| \cF p_t(\xi+\eta) \right|^2 \mu(d\xi) 
    \le C |z|^{2\alpha_1};
\end{align}
\begin{align}\label{cond-time-Holder}
\sup_{t\in[0,T\wedge(T-h)]}\sup_{\eta\in \R^d}\int_{\R^d}|\cF p_{t+h}(\xi+\eta)-\cF p_{t}(\xi+\eta)|^2\mu(d\xi)\leq C|h|^{2\alpha_2};
\end{align}
\begin{align}\label{cond-time-spatial-Holder}
\sup_{\eta\in \R^d}\int_{\R^d}|\cF p_{t}(\xi+\eta)|^2\mu(d\xi)\leq Ct^{-\beta}, \text{ for } t\in[0,T].
\end{align}
Then, the Skorohod solution $u(t,x)$ has a version that is $\varkappa_1$-H\"{o}lder continuous in $x$ with $\varkappa_1<\alpha_1$ and $\varkappa_2$-H\"{o}lder continuous in $t$ with $\varkappa_2<(\alpha_2\wedge \frac {1-\beta}{2})$ on any compact set of $[0,\infty) \times \bR^d$.   %{\red (The exponents in time and space may be different, as in the Stratonovich case.) }
\end{theorem}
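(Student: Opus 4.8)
The plan is to follow the Wiener‑chaos approach of Theorem~\ref{continuity of u} and Lemma~\ref{continuity of I}, upgrading the qualitative $L^p$‑continuity proved there to quantitative H\"older bounds and then applying the Kolmogorov continuity criterion. For $p\ge 2$ and $(t_1,x_1),(t_2,x_2)\in[0,T]\times\R^d$, the triangle inequality together with the hypercontractivity estimate \eqref{hyper} reduces the matter to bounding, for each fixed $n\ge 1$, the two quantities $n!\|\tilde f_n(\cdot,t,x+z)-\tilde f_n(\cdot,t,x)\|_{\cH^{\otimes n}}^2$ and $n!\|\tilde f_n(\cdot,t+h,x)-\tilde f_n(\cdot,t,x)\|_{\cH^{\otimes n}}^2$. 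Once these are shown to be at most $(\mathrm{const})^n|z|^{2\alpha_1}$, respectively $(\mathrm{const})^n|h|^{2(\alpha_2\wedge\frac{1-\beta}{2})}$, with constants that remain summable after multiplication by $(p-1)^{n/2}$, we obtain $\|u(t_1,x_1)-u(t_2,x_2)\|_{L^p}\le C_{T,p}\big(|x_1-x_2|^{\alpha_1}+|t_1-t_2|^{\alpha_2\wedge\frac{1-\beta}{2}}\big)$ for every $p$, and the stated H\"older continuity follows from the multiparameter Kolmogorov criterion by sending $p\to\infty$.

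For the spatial increment I would repeat the Fourier computation \eqref{spatial-c}: after bounding the time factor $\prod_j|t_j-s_j|^{-\beta_0}$ by $A_t^n$ via \eqref{eq-integrate-s-t}, one uses that $\cF[\tilde f_n(\mathbf t,\cdot,t,x+z)-\tilde f_n(\mathbf t,\cdot,t,x)]$ equals the symmetrized product of the factors $\cF p_{t_{j+1}-t_j}(\xi_1+\cdots+\xi_j)$ multiplied by the extra factor $1-e^{-\iota(\xi_1+\cdots+\xi_n)\cdot z}$ sitting on the last propagator $\cF p_{t-t_n}(\xi_1+\cdots+\xi_n)$. Integrating $\xi_n$ first, condition \eqref{cond-space-Holder} (with shift $\eta=\xi_1+\cdots+\xi_{n-1}$, which is exactly why the $\sup_\eta$ is built into that condition) yields a factor $C|z|^{2\alpha_1}$ uniformly in $t_n$ and in $\xi_1,\dots,\xi_{n-1}$; integrating $\xi_{n-1},\dots,\xi_1$ one at a time, condition \eqref{cond-time-spatial-Holder} gives $\prod_{j=1}^{n-1}C(t_{j+1}-t_j)^{-\beta}$; and Lemma~\ref{useful equality} evaluates the residual time integral $\int_{0<t_1<\cdots<t_n<t}\prod_{j=1}^{n-1}(t_{j+1}-t_j)^{-\beta}\,d\mathbf t$ as $\Gamma(1-\beta)^{n-1}t^{n-(n-1)\beta}/\Gamma(n+1-(n-1)\beta)$. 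Thus $n!\|\tilde f_n(\cdot,t,x+z)-\tilde f_n(\cdot,t,x)\|_{\cH^{\otimes n}}^2\le (CA_t)^n|z|^{2\alpha_1}\Gamma(1-\beta)^{n-1}t^{n-(n-1)\beta}/\Gamma(n+1-(n-1)\beta)$, whose square root, weighted by $(p-1)^{n/2}$, is summable in $n$ by Stirling's formula since $\beta<1$.

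For the temporal increment I would split $\tilde f_n(\cdot,t+h,x)-\tilde f_n(\cdot,t,x)$ exactly as in \eqref{A_n}--\eqref{B_n} into $A_n(t,h)$ and $B_n(t,h)$. The term $A_n(t,h)$ is treated as in the previous paragraph, the only change being that the modified last factor is now $\cF p_{t+h-t_n}(\xi_1+\cdots+\xi_n)-\cF p_{t-t_n}(\xi_1+\cdots+\xi_n)$, so that integrating $\xi_n$ one applies \eqref{cond-time-Holder} and extracts $C|h|^{2\alpha_2}$, the rest being identical and giving $A_n(t,h)\le(CA_t)^n|h|^{2\alpha_2}\Gamma(1-\beta)^{n-1}t^{n-(n-1)\beta}/\Gamma(n+1-(n-1)\beta)$. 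For $B_n(t,h)$, after symmetrization the time integration runs over $\{0<t_1<\cdots<t_n<t+h,\ t_n>t\}$ with $t_{n+1}=t+h$; bounding all $n$ frequency integrals by \eqref{cond-time-spatial-Holder} leaves $(CA_{t+h})^n\int_{\{0<t_1<\cdots<t_n<t+h,\ t_n>t\}}\prod_{j=1}^n(t_{j+1}-t_j)^{-\beta}\,d\mathbf t$. Passing to the gap variables $u_j=t_{j+1}-t_j$, the constraint $t_n>t$ becomes $u_n<h$; integrating $u_n$ against $u_n^{-\beta}$ over $(0,h)$ contributes $\tfrac{h^{1-\beta}}{1-\beta}$, and Lemma~\ref{useful equality} disposes of the remaining Dirichlet integral, so that $B_n(t,h)\le (CA_{t+h})^n h^{1-\beta}\Gamma(1-\beta)^{n-1}(t+h)^{(n-1)(1-\beta)}/\big((1-\beta)\Gamma(n-(n-1)\beta)\big)$. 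Summing $(p-1)^{n/2}\sqrt{A_n(t,h)+B_n(t,h)}$ over $n$ is then finite for $h\in(0,1)$ and bounded by $C_{T,p}\,h^{\alpha_2\wedge\frac{1-\beta}{2}}$, which completes the reduction.

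The main obstacle is the boundary‑layer term $B_n(t,h)$: condition \eqref{cond-time-spatial-Holder}, an on‑diagonal blow‑up bound that is uniform in the accumulated frequency shift, is tailored precisely so that the corner integral over $\{t_n>t\}$ produces the sharp power $h^{1-\beta}$, which is what yields the exponent $\tfrac{1-\beta}{2}$ in time; a sub‑optimal estimate here would weaken the conclusion. The remaining care needed is bookkeeping: carrying the Gamma‑function denominators through the successive frequency integrations so that the chaos series actually converges after the $(p-1)^{n/2}$ hypercontractivity weight is inserted, which works for all $p$ thanks to $\beta<1$.
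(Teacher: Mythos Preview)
Your proposal is correct and follows essentially the same route as the paper: hypercontractivity reduces the $L^p$ increments to the chaos norms $n!\|\tilde f_n(\cdot,t,x+z)-\tilde f_n(\cdot,t,x)\|_{\cH^{\otimes n}}^2$ and $A_n(t,h)+B_n(t,h)$, which are bounded exactly as you describe---extracting $|z|^{2\alpha_1}$ via \eqref{cond-space-Holder}, $|h|^{2\alpha_2}$ via \eqref{cond-time-Holder}, and $h^{1-\beta}$ from the boundary layer $\{t_n>t\}$ via \eqref{cond-time-spatial-Holder}, then closing with Lemma~\ref{useful equality} and Kolmogorov. The only cosmetic difference is that the paper records the simplex integral as $1/((n-1)!)^{1-\beta}$ rather than keeping the Gamma functions explicitly.
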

\begin{remark}
 Under Assumption \ref{H}, applying Lemma \ref{Lem-max} and \cite[Lemma 3.9]{Song17}, we can deduce that a sufficient condition for \eqref{cond-time-spatial-Holder}  is 
\begin{equation*}\label{e:cond-BQS}
  \int_{\R^d} \left(\frac1{1+\Psi(\xi)} \right)^{\beta}\mu(d\xi)<\infty,
 \end{equation*}
 which is the condition (T2) in \cite{Song17} with $\beta=1-\alpha_2$ and the condition (1.6) in \cite{BQS2019} with $\beta=\eta$. Condition \eqref{cond-space-Holder} is a consequence of condition (S2) in \cite{Song17}. Furthermore, 
 conditions \eqref{cond-space-Holder} and \eqref{cond-time-Holder} imply the estimates (3.1) and (3.2) in \cite[Proposition 3.1]{BQS2019}, and thus  Theorem \ref{Thm-Skorohod-Holder-space} is consistent with \cite[Theorem 3.2]{BQS2019} which improved the similar result obtained in \cite[Theorem 5.9]{Song17}.
\end{remark}

\begin{proof}
Let $t\in [0,T]$ and $x, z\in \R^d$. By Minkowski's inequality and \eqref{hyper}, we have
\begin{align}\label{spatial-hc}
&\|u(t,x+z)-u(t,x)\|_{L^p}\leq \sum_{n\geq 0}(p-1)^{\frac{n}{2}}\|I_n(\tilde f_n(\cdot,t,x+z))-I_n(\tilde f_n(\cdot,t,x))\|_{L^2}\nonumber\\&=\sum_{n\geq 0}(p-1)^{\frac{n}{2}} (n!\|\tilde f_n(\cdot,t,x+z)-\tilde f_n(\cdot,t,x)\|_{\cH^{\otimes n}}^2)^{\frac 12}.
\end{align}
Note that by
\eqref{spatial-c}, \eqref{cond-space-Holder}, \eqref{cond-time-spatial-Holder} and Lemma \ref{useful equality},
\begin{align*}
&n!\|\tilde f_n(\cdot,t,x+z)-\tilde f_n(\cdot,t,x)\|_{\cH^{\otimes n}}^2 \\&\leq C^n A_t^n \int_{\{0<t_1<\cdots<t_n<t\}}\prod_{j=1}^{n-1}(t_{j+1}-t_j)^{-\beta}|z|^{2\alpha_1}dt_1\cdots dt_n\\&\leq C^n A_t^n |z|^{2\alpha_1}\frac{1}{((n-1)!)^{1-\beta}}t^{(n-1)(1-\beta)+1}.
\end{align*}
The spatial H\"{o}lder continuity of $u(t,x)$ follows from the  Kolmogorov's continuity theorem.

Assume $h>0$ (the case of $h<0$ is similar). By Minkowski's inequality and \eqref{hyper},
\begin{align}\label{time-hc}
&\|u(t+h,x)-u(t,x)\|_{L^p}\leq \sum_{n\geq 0}(p-1)^{\frac{n}{2}}\|I_n(\tilde f_n(\cdot,t+h,x))-I_n(\tilde f_n(\cdot,t,x))\|_{L^2}\nonumber\\&=\sum_{n\geq 0}(p-1)^{\frac{n}{2}} (n!\|\tilde f_n(\cdot,t+h,x)-\tilde f_n(\cdot,t,x)\|_{\cH^{\otimes n}}^2)^{\frac 12}\\&\leq \sum_{n\geq 0}(p-1)^{\frac{n}{2}}2^{\frac 12}(A_n(t,h)+B_n(t,h))^{\frac 12},\nonumber
\end{align}
where $A_n(t,h)$ and $B_n(t,h)$ are given by \eqref{A_n} and \eqref{B_n}, respectively. By   \eqref{cond-time-Holder}, \eqref{cond-time-spatial-Holder} and Lemma \ref{useful equality}, we get
\begin{align}\label{A-hc}
A_n(t,h)&\leq C^n A_t^n\int_{\{0<t_1<\cdots<t_n<t\}}\prod_{j=1}^{n-1}(t_{j+1}-t_j)^{-\beta}h^{2\alpha_2}dt_1\cdots dt_n\nonumber\\&\leq C^n A_t^n h^{2\alpha_2}\frac{1}{((n-1)!)^{1-\beta}}t^{(n-1)(1-\beta)+1},
\end{align}
and 
\begin{align}\label{B-hc}
B_n(t,h)&\leq C^n A_{t+h}^n \int_{\{0<t_1<\cdots<t_n<t+h\}}\prod_{j=1}^{n-1}(t_{j+1}-t_j)^{-\beta}(t+h-t_n)^{-\beta}{\bf 1}_{[0,t+h]^n\setminus[0,t]^n}(\mathbf{t})dt_1\cdots dt_n\nonumber\\&=C^n A_{t+h}^n\frac{\Gamma(1-\beta)^{n}}{\Gamma((n-1)(1-\beta)+1)}\int_t^{t+h}t_n^{(n-1)(1-\beta)}(t+h-t_n)^{-\beta}dt_n\nonumber\\&=C^n A_{t+h}^n\frac{\Gamma(1-\beta)^{n}}{\Gamma((n-1)(1-\beta)+1)}\int_0^{h}(t+h-t_n)^{(n-1)(1-\beta)}t_n^{-\beta}dt_n\nonumber\\&\leq C^n A_{t+h}^n\frac{\Gamma(1-\beta)^{n}}{\Gamma((n-1)(1-\beta)+1)} T^{(n-1)(1-\beta)} \frac{1}{1-\beta}h^{1-\beta}\\&\leq C^n A_{t+h}^n\frac{1}{((n-1)!)^{1-\beta}} T^{(n-1)(1-\beta)}h^{1-\beta}.\nonumber
\end{align}
Then the H\"{o}lder continuity of $u(t,x)$ in time variable follows from \eqref{time-hc}, \eqref{A-hc}, \eqref{B-hc} and Kolmogorov's continuity theorem.
\end{proof}

\subsection{Regularity of the law}\label{sec:law2}

In this subsection, we study the regularity of the law of the Skorohod solution to \eqref{spde}. 

In light of Theorem \ref{Thm-FK-Skorohod}, using a similar argument as in the proof of Theorem \ref{thm:existence-densitiy} we can prove the existence of probability density under the stronger Dalang's condition \eqref{dalang-stra}. 

\begin{theorem}\label{thm:existence-densitiy'}
Assume Assumption \ref{H} and condition \eqref{dalang-stra}. Furthermore, suppose that $u_0(x)>0$ almost everywhere and \eqref{e:existence-density} is satisfied.   Then, the law of $u(t,x)$ given by \eqref{e:FK1} has a probability density. 
\end{theorem}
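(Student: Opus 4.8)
The plan is to verify the hypotheses of the Bouleau--Hirsch criterion recalled before Theorem~\ref{thm:existence-densitiy}, exactly as in that proof: it suffices to show $u(t,x)\in\mathbb D^{1,2}$ and $\|Du(t,x)\|_{\mathcal H}>0$ a.s. By Theorem~\ref{Thm-FK-Skorohod}, under \eqref{dalang-stra} the (unique) Skorohod solution is given by \eqref{e:FK-sko}, which we write as
\[
u(t,x)=\E_X\Big[u_0(X_t^x)\exp\big(V_{t,x}-Q_{t,x}\big)\Big],\qquad Q_{t,x}:=\tfrac12\int_0^t\!\!\int_0^t|r-s|^{-\beta_0}\gamma(X_r^x-X_s^x)\,drds,
\]
with $V_{t,x}=W(\mathfrak{\delta}(X_{t-\cdot}^x-\cdot))$ as in \eqref{V_{t,x}}. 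The key structural point is that the It\^o--Stratonovich correction $Q_{t,x}$ is $\sigma(X)$-measurable, hence it is not seen by the Malliavin derivative with respect to $W$.

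First I would show $u(t,x)\in\mathbb D^{1,2}$ together with
\[
Du(t,x)=\E_X\Big[u_0(X_t^x)\exp\big(V_{t,x}-Q_{t,x}\big)\,\mathfrak{\delta}(X_{t-\cdot}^x-\cdot)\Big].
\]
This follows by repeating Step~1 of the proof of Theorem~\ref{thm:FK}, starting now from the Wick-regularised approximation \eqref{approximate1} with Feynman--Kac representation \eqref{eq-FK-approx}: one replaces $\mathfrak{\delta}(X_{t-\cdot}^x-\cdot)$ by $A_{t,x}^{\eps,\delta}$ and $Q_{t,x}$ by $\tfrac12\|A_{t,x}^{\eps,\delta}\|_{\mathcal H}^2$, shows the $L^2(\Omega)$-convergence of $u^{\eps,\delta}(t,x)$ and the $L^2(\Omega;\mathcal H)$-convergence of $Du^{\eps,\delta}(t,x)$, and passes to the limit using closedness of $D$. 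The extra factor $\exp(-Q_{t,x})\le1$ only improves the required integrability relative to the Stratonovich case; the uniform exponential bounds come from Theorem~\ref{exp'} and the two-copy estimate in the proof of Theorem~\ref{Thm-FK-moment}.

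The main computation is the squared norm. Let $\tilde X$ be an independent copy of $X$, $\tilde V_{t,x}:=W(\mathfrak{\delta}(\tilde X_{t-\cdot}^x-\cdot))$ and $\tilde Q_{t,x}$ the analogue of $Q_{t,x}$ for $\tilde X$. Conditionally on $(X,\tilde X)$ the pair $(V_{t,x},\tilde V_{t,x})$ is jointly Gaussian with $\mathrm{Var}_W[V_{t,x}+\tilde V_{t,x}]=2Q_{t,x}+2\tilde Q_{t,x}+2\int_0^t\!\int_0^t|r-s|^{-\beta_0}\gamma(X_r^x-\tilde X_s^x)\,drds$, so after integrating out $W$ the correction terms cancel and
\begin{align*}
\|Du(t,x)\|_{\mathcal H}^2
&=\E_{X,\tilde X}\Big[u_0(X_t^x)u_0(\tilde X_t^x)\exp\big(V_{t,x}+\tilde V_{t,x}-Q_{t,x}-\tilde Q_{t,x}\big)\,\langle\mathfrak{\delta}(X_{t-\cdot}^x-\cdot),\mathfrak{\delta}(\tilde X_{t-\cdot}^x-\cdot)\rangle_{\mathcal H}\Big]\\
&=\E_{X,\tilde X}\Big[u_0(X_t^x)u_0(\tilde X_t^x)\exp\big(V_{t,x}+\tilde V_{t,x}-Q_{t,x}-\tilde Q_{t,x}\big)\int_0^t\!\!\int_0^t|r-s|^{-\beta_0}\gamma(X_r^x-\tilde X_s^x)\,drds\Big].
\end{align*}
Because $u_0>0$ a.e.\ and $X_t^x$ has the density $p_t^{(x)}$, we get $u_0(X_t^x)>0$ and $u_0(\tilde X_t^x)>0$ a.s., so the prefactor $u_0(X_t^x)u_0(\tilde X_t^x)\exp(V_{t,x}+\tilde V_{t,x}-Q_{t,x}-\tilde Q_{t,x})$ is strictly positive a.s., while $|r-s|^{-\beta_0}\gamma(X_r^x-\tilde X_s^x)\ge0$. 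Hence for a.e.\ $\omega$ the quantity $\|Du(t,x)\|_{\mathcal H}^2$ can vanish only if $\int_0^t\int_0^t|r-s|^{-\beta_0}\gamma(X_r^x-\tilde X_s^x)\,drds=0$ for a.e.\ $(X,\tilde X)$; but this integral has expectation $\int_0^t\int_0^t|r-s|^{-\beta_0}\E[\gamma(X_r^x-\tilde X_s^x)]\,drds>0$ by \eqref{e:existence-density}, a contradiction. Therefore $\|Du(t,x)\|_{\mathcal H}>0$ a.s., and Bouleau--Hirsch yields the existence of a density.

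I expect the only genuinely technical step to be the rigorous justification that $u(t,x)\in\mathbb D^{1,2}$ with the stated formula for $Du(t,x)$, via the approximation \eqref{approximate1}--\eqref{eq-FK-approx}. Since this is a straightforward (and, thanks to $e^{-Q_{t,x}}\le1$, milder) adaptation of Step~1 of the proof of Theorem~\ref{thm:FK}, I would present it tersely and cite that argument together with the proof of Theorem~\ref{Thm-FK-moment} for the needed uniform exponential estimates; the remaining positivity argument is short.
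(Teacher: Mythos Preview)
Your approach is correct and matches the paper's: the paper simply says ``In light of Theorem~\ref{Thm-FK-Skorohod}, using a similar argument as in the proof of Theorem~\ref{thm:existence-densitiy}\ldots'', and you carry out exactly that adaptation, replacing $\exp(V_{t,x})$ by $\exp(V_{t,x}-Q_{t,x})$ and noting that the correction $Q_{t,x}$ is $\sigma(X)$-measurable so it does not affect the Malliavin derivative with respect to $W$.

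One small remark: the sentence ``so after integrating out $W$ the correction terms cancel'' is misplaced. The quantity $\|Du(t,x)\|_{\mathcal H}^2$ is an $\omega_W$-random variable and you need it strictly positive almost surely in $\omega_W$, not merely in expectation; so $W$ should \emph{not} be integrated out at that stage. Fortunately your displayed formula is already the correct one (the expectation is only over $X,\tilde X$, and $V_{t,x},\tilde V_{t,x}$ remain), and your subsequent positivity argument---for each $\omega_W$ the prefactor is strictly positive while the nonnegative kernel integral has positive $(X,\tilde X)$-probability of being strictly positive by \eqref{e:existence-density}---is exactly right. Just delete or rephrase that one sentence.
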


Still assuming \eqref{dalang-stra}, we can prove the smoothness of the density with the help of Feynman--Kac formula given in Theorem \ref{Thm-FK-Skorohod} under some proper conditions. 
\begin{theorem} \label{Thm-density-Skorohod}
Assume  the same conditions as in Theorem \ref{thm:smooth-density1}. 
Then the probability density of the mild Skorohod solution $u(t,x)$ to \eqref{spde} exists and is smooth.
\end{theorem}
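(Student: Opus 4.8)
The plan is to combine the Feynman--Kac representation from Theorem~\ref{Thm-FK-Skorohod} with the Malliavin-calculus criterion for smoothness of densities, following closely the proof of Theorem~\ref{thm:smooth-density1}. Under \eqref{dalang-stra}, Theorem~\ref{Thm-FK-Skorohod} yields
\[
u(t,x)=\E_X\!\left[u_0(X_t^x)\exp\!\left(V_{t,x}-\tfrac12 Q_X\right)\right],\qquad Q_X:=\int_0^t\!\!\int_0^t|r-s|^{-\beta_0}\gamma(X_r^x-X_s^x)\,drds ,
\]
where, conditionally on $X$, $V_{t,x}=W(\mathfrak{\delta}(X_{t-\cdot}^x-\cdot))$ is centred Gaussian with variance $Q_X$, and $Q_X<\infty$ a.s.\ by Theorem~\ref{exp'}. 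Since the correction $-\tfrac12 Q_X$ is a functional of $X$ only and does not involve $W$, I would first show, by a limiting argument applied to the approximations \eqref{eq-FK-approx} (entirely analogous to the one establishing $u(t,x)\in\mathbb D^\infty$ in the Stratonovich case, which in turn follows Step~1 of the proof of Theorem~\ref{thm:FK}), that $u(t,x)\in\mathbb D^\infty$ with $D^k u(t,x)=\E_X[u_0(X_t^x)\exp(V_{t,x}-\tfrac12 Q_X)\,\mathfrak{\delta}(X_{t-\cdot}^x-\cdot)^{\otimes k}]$ for $k\ge 1$; the required uniform moment bounds follow from Theorem~\ref{exp'}. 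By \cite[Theorem~2.1.4]{Nualart} it then suffices to prove that $\E\big[\|Du(t,x)\|_{\mathcal H}^{-2p}\big]<\infty$ for every $p>0$.

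Let $\tilde X$ be an independent copy of $X$, set $\tilde V_{t,x}=W(\mathfrak{\delta}(\tilde X_{t-\cdot}^x-\cdot))$, let $\tilde Q_{\tilde X}$ denote the analogue of $Q_X$ for $\tilde X$, and put $C_{X,\tilde X}=\int_0^t\!\int_0^t|r-s|^{-\beta_0}\gamma(X_r^x-\tilde X_s^x)\,drds$. Then
\[
\|Du(t,x)\|_{\mathcal H}^2=\E_{X,\tilde X}\!\left[u_0(X_t^x)u_0(\tilde X_t^x)\exp\!\left(V_{t,x}-\tfrac12 Q_X+\tilde V_{t,x}-\tfrac12\tilde Q_{\tilde X}\right)C_{X,\tilde X}\right].
\]
I would apply Jensen's inequality with the convex function $y\mapsto y^{-p}$ (as in the proof of Theorem~\ref{thm:smooth-density1}), then take expectation over $W$ and integrate out the Gaussian variable $V_{t,x}+\tilde V_{t,x}$ conditionally on $(X,\tilde X)$, using $\E_W\exp(-p[V_{t,x}+\tilde V_{t,x}])=\exp(\tfrac{p^2}2\sigma^2)$ with $\sigma^2=\|\mathfrak{\delta}(X_{t-\cdot}^x-\cdot)+\mathfrak{\delta}(\tilde X_{t-\cdot}^x-\cdot)\|_{\mathcal H}^2=Q_X+\tilde Q_{\tilde X}+2C_{X,\tilde X}$. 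This gives
\[
\E\big[\|Du(t,x)\|_{\mathcal H}^{-2p}\big]\le\E_{X,\tilde X}\!\left[|u_0(X_t^x)u_0(\tilde X_t^x)|^{-p}\exp\!\left(\tfrac p2(Q_X+\tilde Q_{\tilde X})+\tfrac{p^2}2\sigma^2\right)|C_{X,\tilde X}|^{-p}\right].
\]
By Cauchy--Schwarz in $\mathcal H$, $2C_{X,\tilde X}\le Q_X+\tilde Q_{\tilde X}$, hence $\sigma^2\le 2(Q_X+\tilde Q_{\tilde X})$, so the exponential weight is at most $\exp\big((\tfrac p2+p^2)(Q_X+\tilde Q_{\tilde X})\big)$.

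To conclude I would use H\"older's inequality with exponents $p_1,p_2,p_3>1$ satisfying $p_1^{-1}+p_2^{-1}+p_3^{-1}=1$, bounding the right-hand side by the product of $\big(\E|u_0(X_t^x)|^{-pp_1}\big)^{2/p_1}$, $\big(\E\exp(cQ_X)\big)^{2/p_2}$ for a suitable constant $c=c(p,p_2)$ (using the independence of $X$ and $\tilde X$), and $\big(\E|C_{X,\tilde X}|^{-pp_3}\big)^{1/p_3}$. The first factor is finite because $\E|u_0(X_t^x)|^{-q}<\infty$ for all $q>0$; the second is finite by Theorem~\ref{exp'}; and, since $|r-s|\le t$ forces $C_{X,\tilde X}\ge t^{-\beta_0}\int_0^t\!\int_0^t\gamma(X_r^x-\tilde X_s^x)\,drds$, the third is finite by hypothesis \eqref{e:con-smooth}. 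Therefore $\E[\|Du(t,x)\|_{\mathcal H}^{-2p}]<\infty$ for all $p>0$, and the smoothness of the density follows.

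The only genuinely new feature relative to Theorem~\ref{thm:smooth-density1} is the It\^o--Stratonovich correction $-\tfrac12 Q_X$ in the Feynman--Kac formula \eqref{e:FK-sko}: after integrating out the Gaussian variable it leaves behind the extra exponential weight $\exp(\tfrac p2(Q_X+\tilde Q_{\tilde X}))$, and the main point of the argument is that this weight is still controlled by the exponential integrability of Theorem~\ref{exp'} thanks to the bound $\sigma^2\le 2(Q_X+\tilde Q_{\tilde X})$. I do not anticipate any other difficulty; apart from this, every estimate is identical to the one used in the Stratonovich case.
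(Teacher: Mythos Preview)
Your proposal is correct and follows essentially the same strategy as the paper: start from the Feynman--Kac representation \eqref{e:FK-sko}, apply Jensen's inequality to $\|Du(t,x)\|_{\mathcal H}^{-2p}$, then H\"older to separate the factors, and bound each piece by the hypotheses \eqref{e:con-smooth} together with the exponential integrability of Theorem~\ref{exp'}. The only cosmetic difference is that you integrate out the Gaussian variable $V_{t,x}+\tilde V_{t,x}$ explicitly (obtaining the weight $\exp(\tfrac{p^2}{2}\sigma^2)$ and then bounding $\sigma^2\le 2(Q_X+\tilde Q_{\tilde X})$) before applying H\"older with three exponents, whereas the paper applies H\"older directly with four exponents and invokes Theorem~\ref{thm:exp} for the $V$-factor and Theorem~\ref{exp'} for the $U$-factor separately; the substance is identical.
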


\begin{proof}

The proof parallels that of Theorem \ref{thm:smooth-density1}, and a concise outline is presented below. By Theorem \ref{Thm-FK-Skorohod}, we have
\begin{align*}
    u(t,x) = \bE_X \left[ u_0(X_t^x) \exp \left( V_{t,x} - U_{t,x} \right) \right],
\end{align*}
where $V_{t,x} = W(\boldsymbol{\delta}(X_{t-\cdot}^x - \cdot))$ is given in \eqref{V_{t,x}} and 
\begin{align*}
    U_{t,x} = \dfrac{1}{2} \int_0^t \int_0^t |r-s|^{-\beta_0} \gamma(X_r^x-X_s^x) drds.
\end{align*}
The Malliavin derivative of $u(t,x)$ is given by
\begin{align*}
    D_{s,y} u(t,x)
    %=& \bE_X \left[ u_0(X_t^x) \exp \left( V_{t,x} - U_{t,x} \right) D_{s,y} V_{t,x} \right]\\
    = &\bE_X \left[ u_0(X_t^x) \exp \left( V_{t,x} - U_{t,x} \right) \boldsymbol{\delta}(X_{t-s}^x-y) \right],
\end{align*}
and thus
\begin{align*}
    \left\| D u(t,x) \right\|_{\cH}^2
    = \bE_{X, \tilde X} \left[ u_0(X_t^x) u_0(\tilde X_t^x) \exp \left( V_{t,x} - U_{t,x} + \tilde V_{t,x} - \tilde U_{t,x} \right) \left\langle \boldsymbol{\delta}(X_{t-\cdot}^x-\cdot), \boldsymbol{\delta}(\tilde X_{t-\cdot}^x-\cdot) \right\rangle_{\cH} \right],
\end{align*}
where $\tilde X$ is an independent copy of $X$, and $\tilde V_{t,x}, \tilde U_{t,x}$ are the corresponding quantities of $V_{t,x}, U_{t,x}$ with $X$ replaced by $\tilde X$. Then by Jensen's inequality and H\"{o}lder inequality, we have
\begin{align} \label{eq-negative moment}
    & \E\left[\left\| D u(t,x) \right\|_{\cH}^{-2p} \right]\nonumber \\
   & \le \bE \left[ \left( u_0(X_t^x) u_0(\tilde X_t^x) \right)^{-p} \exp \left( -p \left( V_{t,x} - U_{t,x} + \tilde V_{t,x} - \tilde U_{t,x} \right) \right) \left| \left\langle \boldsymbol{\delta}(X_{t-\cdot}^x-\cdot), \boldsymbol{\delta}(\tilde X_{t-\cdot}^x-\cdot) \right\rangle_{\cH} \right|^{-p} \right] \nonumber \\
    &\le \left( \bE \left[ \left( u_0(X_t^x) u_0(\tilde X_t^x) \right)^{-pp_1} \right] \right)^{1/p_1}
    \left( \bE \left[ \exp \left( -pp_2 \left( V_{t,x} + \tilde V_{t,x} \right) \right) \right] \right)^{1/p_2} \nonumber \\
    &\hspace{1em} \times \left( \bE \left[ \exp \left( pp_3 \left( U_{t,x} + \tilde U_{t,x} \right) \right) \right] \right)^{1/p_3}
    \left( \bE \left[ \left| \left\langle \boldsymbol{\delta}(X_{t-\cdot}^x-\cdot), \boldsymbol{\delta}(\tilde X_{t-\cdot}^x-\cdot) \right\rangle_{\cH} \right|^{-pp_4} \right] \right)^{1/p_4},
\end{align}
for any  positive numbers $p_1,p_2,p_3,p_4$ satisfying $\frac{1}{p_1}+\frac{1}{p_2}+\frac{1}{p_2}+\frac{1}{p_4}=1$.

Utilizing the condition  \eqref{e:con-smooth}, we have
\begin{align} \label{eq-negative moment-1}
    \left( \bE \left[ \left( u_0(X_t^x) u_0(\tilde X_t^x) \right)^{-pp_1} \right] \right)^{1/p_1}
    = \left( \bE \left[ \left( u_0(X_t^x) \right)^{-pp_1} \right] \right)^{2/p_1} < \infty.
\end{align}
Since $X$ and $\tilde X$ have identical distribution, by Cauchy-Schwarz inequality and Theorem \ref{thm:exp}, we have
\begin{align} \label{eq-negative moment-2}
    \left( \bE \left[ \exp \left( -pp_2 \left( V_{t,x} + \tilde V_{t,x} \right) \right) \right] \right)^{1/p_2}
    \le \left( \bE \left[ \exp \Big( -2pp_2 V_{t,x} \Big) \right] \right)^{1/p_2} < \infty.
\end{align}
By independence and Theorem \ref{exp'}, we have
\begin{align} \label{eq-negative moment-3}
    \left( \bE \left[ \exp \left( pp_3 \left( U_{t,x} + \tilde U_{t,x} \right) \right) \right] \right)^{1/p_3}
    = \left( \bE \left[ \exp \Big( pp_3 U_{t,x} \Big) \right] \right)^{2/p_3} < \infty.
\end{align}
Condition \eqref{e:con-smooth} together with the fact  $|r-s|^{-\beta_0} \ge t^{-\beta_0}$ yields that
\begin{align} \label{eq-negative moment-4}
    & \left( \bE \left[ \left| \left\langle \boldsymbol{\delta}(X_{t-\cdot}^x-\cdot), \boldsymbol{\delta}(\tilde X_{t-\cdot}^x-\cdot) \right\rangle_{\cH} \right|^{-pp_4} \right] \right)^{1/p_4} \nonumber \\
    =& \bigg( \bE \bigg[ \left| \int_0^t\int_0^t |r-s|^{-\beta_0} \gamma (X_r^x - \tilde X_s^x) drds \right|^{-pp_4} \bigg] \bigg)^{1/p_4} \nonumber \\
    \le& t^{\beta_0p} \bigg( \bE \bigg[ \left| \int_0^t\int_0^t \gamma (X_r^x - \tilde X_s^x) drds \right|^{-pp_4} \bigg] \bigg)^{1/p_4}
    < \infty.
\end{align}
Substituting \eqref{eq-negative moment-1}, \eqref{eq-negative moment-2}, \eqref{eq-negative moment-3} and \eqref{eq-negative moment-4} to \eqref{eq-negative moment}, we have for all $p>0$,
\begin{align}\label{e:Du-p}
    \E\left[\left\| D u(t,x) \right\|_{\cH}^{-2p}\right] < \infty.
\end{align}
\end{proof}

In the proof of Theorem \ref{Thm-density-Skorohod}, in order to prove the desired inequality  \eqref{e:Du-p}, we employ the Feynman--Kac formula \eqref{e:FK-sko} of the Skorohod solution to \eqref{spde}, which is valid only under  the stronger Dalang's condition \eqref{dalang-stra}. Under  the weaker condition~\eqref{dalang-SKo},  a unique square integrable Skorohod solution exists (see   Theorem \ref{Thm-solution}) but  in general does not admit any Feynman--Kac type formula. In this situation, the finiteness of $\E \big[\|Du(t,x)\|_{\mathcal H}^{-2p}\big]$  is closely related to that of $\E \left[|u(t,x)|^{-2p}\right]$ which can be obtained by studying the small ball probability of $u(t,x)$ (see e.g.~\cite{MN08} which concerns semilinear heat equations with space-time white noise).

In the rest of the section, we shall prove the existence of the probability density for the Skorohod solution under the weaker Dalang's condition \eqref{dalang-SKo}. Our method is inspired by  \cite{NQ07,BQS2019EJP}. %(delete this sentence?)}. 

%We use the notation $A \sim B$ if there exists positive constants $C_1,C_2$, such that $C_1 B \le A \le C_2 B$.

\begin{assumptionp}{(H')}\label{H'}
  We assume $p_t^{(x)}(y) \geq  
\tilde c_0 \tilde P_t(y-x)$ for a non-negative function $\tilde P_t(x)$ satisfying 
     \begin{equation}\label{e:P-bound-2}
     \cF{\tilde  P}_t(\xi) \geq  \tilde c_1 \exp\left(-\tilde c_2 t \tilde \Psi(\xi)\right),
     \end{equation}
    where $\tilde c_0,\tilde c_1,\tilde c_2$ are positive constants and $\tilde \Psi(\xi)=\tilde \Psi(|\xi|)$ is a non-negative measurable function. 
\end{assumptionp}

{%\blue
The following localization lemma is taken from \cite[Lemma 6.3]{BQS2019EJP}, which will be used to prove the regularity of the law of the Skorohod solution under the weaker condition~\eqref{dalang-SKo}. 
\begin{lemma}\label{lem:local-absolute-continuity}
Let $(\Gamma_m)_{m\geq 1}$ be a sequence of open sets in $\mathbb R$ such that
$0\notin \Gamma_m$ and $\Gamma_m\subset \Gamma_{m+1}$ for all $m\geq 1$.
Let
   $ \Gamma=\bigcup_{m\geq 1}\Gamma_m .$
Let $F\in \mathbb D^{2,p}$ for some $p>1$ be such that, for all $m\geq 1$,
\[
    \|DF\|_{\mathcal H}>0
    \quad \text{a.s. on } \{F\in \Gamma_m\}.
\]
Then the restriction to $\Gamma$ of the law of
$F\mathbf 1_{\{F\in \Gamma\}}$ is absolutely continuous with respect to
the Lebesgue measure on $\Gamma$.
\end{lemma}

}

\begin{theorem} \label{Thm-density-Skorohod-1}
Assume  \ref{H}, \ref{H'},  \eqref{dalang-SKo} and $\mu\big(\{\xi\in\R^d:\tilde\Psi(\xi)<+\infty\}\big)>0.$%$\{\xi\in\R^d: \tilde \Psi(\xi)<+\infty\} \cap \mathrm{supp}(\mu)$ has positive Lebesgue measure.
 Furthermore, suppose  in Assumption \ref{H} $P_t(x) = t^{-d/2} \exp\left(-c|x|^2/t\right)$ for some positive constant $c$ and $\Psi(\xi)=|\xi|^2$ .

Then the restriction of the law of the random variable $u(t,x)1_{\{u(t,x) \not=0\}}$ to the set $\bR \setminus \{0\}$ is absolutely continuous with respect to the Lebesgue measure on $\bR \setminus \{0\}$.
\end{theorem}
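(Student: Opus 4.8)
The plan is to combine the Wiener chaos representation of Theorem \ref{Thm-solution} with a localized version of the Bouleau--Hirsch criterion, exploiting the stochastic integral equation satisfied by the Malliavin derivative of $u(t,x)$.

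\textbf{Step 1: $u(t,x)\in\mathbb D^{1,2}$ and the equation for $Du$.} Since $u(t,x)=\sum_{n\ge0}I_n(\tilde f_n(\cdot,t,x))$ with $\sum_n n!\|\tilde f_n(\cdot,t,x)\|_{\cH^{\otimes n}}^2<\infty$ by Theorem \ref{Thm-solution}, and since the bound \eqref{eq:tilde f} still produces a convergent series after inserting an extra factor $n$ (absorbed by Stirling's formula, exactly as in the proof of Theorem \ref{continuity of u}), we get $\sum_n n\cdot n!\|\tilde f_n(\cdot,t,x)\|_{\cH^{\otimes n}}^2<\infty$, hence $u(t,x)\in\mathbb D^{1,2}$ (in fact $u(t,x)\in\mathbb D^{1,p}$ for all $p$). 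Differentiating the mild equation \eqref{e:mild-Skro} (taking $u_0\equiv1$ for notational convenience) gives, for $0<r<t$,
\begin{equation*}
D_{r,z}u(t,x)=p_{t-r}^{(x)}(z)\,u(r,z)+\int_r^t\!\!\int_{\R^d}p_{t-s}^{(x)}(y)\,D_{r,z}u(s,y)\,W(ds,dy),
\end{equation*}
and $D_{r,z}u(t,x)=0$ for $r>t$; under Assumption \ref{H} with $P_t$ Gaussian the right-hand side is an honest (random) function of $(r,z)$, and the equation is well posed in $L^2(\Omega;\cH)$.

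\textbf{Step 2: reduction.} By the localized criterion for absolute continuity (Bouleau--Hirsch; see \cite[Ch.~2]{Nualart}, and the use of it in \cite{NQ07}), for $F\in\mathbb D^{1,2}$ the measure $\mathbf 1_{\{\|DF\|_{\cH}>0\}}\,P\circ F^{-1}$ is absolutely continuous with respect to the Lebesgue measure. Applying this with $F=u(t,x)$, and writing, for a Lebesgue-null set $B\subset\R\setminus\{0\}$,
\begin{equation*}
P(u(t,x)\in B)=P(u(t,x)\in B,\ \|Du(t,x)\|_\cH>0)+P(u(t,x)\in B,\ \|Du(t,x)\|_\cH=0),
\end{equation*}
the first term vanishes, and the second is bounded by $P(\{u(t,x)\neq0\}\cap\{\|Du(t,x)\|_\cH=0\})$ since $B\subset\R\setminus\{0\}$. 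Thus it suffices to prove that $\|Du(t,x)\|_\cH>0$ almost surely on $\{u(t,x)\neq0\}$.

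\textbf{Step 3: positivity of $\|Du(t,x)\|_\cH$ on $\{u(t,x)\neq0\}$ — the crux.} Following the scheme of \cite{NQ07}, I would argue by contradiction and assume $P(A)>0$, where $A:=\{u(t,x)\neq0\}\cap\{\|Du(t,x)\|_\cH=0\}$. Using the spectral form \eqref{e:inner-prod} of the $\cH$-norm — and, for $\beta_0\in(0,1)$, the representation $|r-s|^{-\beta_0}=c_{\beta_0}\int_\R e^{\iota\tau(r-s)}|\tau|^{\beta_0-1}d\tau$, so that $\|Du(t,x)\|_\cH^2=c_{\beta_0}\int_\R|\tau|^{\beta_0-1}\int_{\R^d}|\cF_{r,z}[Du(t,x)](\tau,\xi)|^2\mu(d\xi)\,d\tau$ — the event $\{\|Du(t,x)\|_\cH=0\}$ forces $\cF_z[D_{r,\cdot}u(t,x)](\xi)=0$ for a.e.\ $r\in(0,t)$ and $\mu$-a.e.\ $\xi$. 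Substituting the equation of Step 1 and letting $r\uparrow t$: the term $\cF_z[p_{t-r}^{(x)}(\cdot)u(r,\cdot)](\xi)$ tends to $u(t,x)e^{-\iota x\cdot\xi}$, because $p_{t-r}^{(x)}(\cdot)u(r,\cdot)$ converges to $u(t,x)\delta_x$ (using the $L^p$-continuity of $u$ from Theorem \ref{continuity of u} and the fact that $p_{t-r}^{(x)}$ is an approximate identity, valid since $P_t$ is Gaussian), while the stochastic-integral remainder must be shown to be negligible in an averaged sense as $r\uparrow t$ through a second-moment estimate — this is exactly where $\Psi(\xi)=|\xi|^2$ and the Gaussian form of $P_t$ make the bounds on the iterated convolutions $p^{(x)}_{t-s}*\cdots$ explicit and uniform. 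Finally, the lower bound of Assumption \ref{H'} together with the hypothesis that $\{\tilde\Psi<\infty\}\cap\mathrm{supp}(\mu)$ has positive Lebesgue measure guarantees that $\int_{\R^d}|\cF p^{(x)}_{t-r}(\xi)|^2\mu(d\xi)$ stays bounded away from $0$ as $r\uparrow t$, so that the $\cH$-pairing genuinely ``sees'' $u(t,x)$; hence on $A$ one is forced to conclude $u(t,x)=0$, contradicting the definition of $A$.

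\textbf{Main obstacle.} The delicate part is making Step 3 rigorous. Two issues require care: controlling the stochastic-integral remainder in the equation for $Du$ uniformly as $r\uparrow t$ (a quantitative $L^2(\Omega\times(0,t))$ estimate, using the Gaussian heat-kernel bounds and Dalang's condition \eqref{dalang-SKo}); and — a feature absent from the white-in-time framework of \cite{NQ07} — the fact that for $\beta_0>0$ the norm $\|\cdot\|_\cH$ is not monotone under restriction of the time variable, so one cannot simply pass to $Du(t,x)\mathbf 1_{[t-\varepsilon,t]}$ and must instead work throughout with the Fourier-in-time representation of the $\cH$-norm. If the direct limiting argument at $r\uparrow t$ proves awkward, a fallback is to run the same computation along the regularized equations \eqref{approximate1} (whose solutions are given by \eqref{eq-FK-approx}) and pass to the limit, as in \cite{BQS2019EJP}.
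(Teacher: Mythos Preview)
Your overall strategy matches the paper's: apply the localized Bouleau--Hirsch criterion, reduce to showing $\|Du(t,x)\|_{\cH}>0$ a.s.\ on $\{u(t,x)\neq 0\}$, and handle the non-monotone $\cH$-norm by passing to a white-in-time surrogate via Fourier analysis. Your observation in Step~3 that $\|Du(t,x)\|_\cH=0$ forces $\cF_z[D_{r,\cdot}u(t,x)](\xi)=0$ for a.e.\ $r$ and $\mu$-a.e.\ $\xi$ is exactly the content of the paper's Lemma~\ref{e:tilde}, and is the right reduction.

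The gap is in the execution of your contradiction argument. Knowing that $\cF_z[D_{r,\cdot}u](\xi)=0$ only for \emph{almost every} $r$ does not let you simply ``let $r\uparrow t$'': the exceptional null set could accumulate at $t$, and in any case the pointwise-in-$\xi$ convergence $\cF_z[p_{t-r}^{(x)}(\cdot)u(r,\cdot)](\xi)\to u(t,x)e^{-\iota x\cdot\xi}$ is not obviously compatible with the stochastic remainder, which you only propose to control ``in an averaged sense''. The paper never takes a pointwise limit in $r$: it works instead with the integrated quantity $\int_{t-\delta}^t\|D_{r,\cdot}u(t,x)\|_{\tilde L^2(\R^d)}^2\,dr$ (where $\|f\|_{\tilde L^2}^2=\int|\cF f(\xi)|^2\mu(d\xi)$), derives from the equation for $Du$ a lower bound $\frac{1}{4m^2}\psi(\delta)-\frac12 J(\delta)-I(\delta)$ on $\Omega_m=\{|u(t,x)|>1/m\}$, and concludes by Markov's inequality after showing $\bE[I(\delta)]+\bE[J(\delta)]=o(\psi(\delta))$ as $\delta\to0$. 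Assumption~\ref{H'} provides the linear lower bound $\psi(\delta)\gtrsim\delta$ exactly as you anticipate; the Gaussian form of $P_t$ enters only in bounding $J(\delta)$ via a near/far splitting $B(x;\delta^{1/3})$ versus its complement.

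A second point you miss: controlling the Skorohod-integral remainder $I(\delta)$ through \eqref{e:formula1} requires bounds on the \emph{second} Malliavin derivative, so $u\in\mathbb D^{1,p}$ is not enough. The paper first establishes $u\in\mathbb D^{2,p}$ and then obtains the uniform estimates $\sup_r\bE\|D_{r,\cdot}u\|_{\tilde L^2}^2<\infty$ and $\sup_r\bE\|D^2_{\cdot,(r,\cdot)}u\|_{\cH\otimes\tilde L^2}^2<\infty$ by passing through the approximating Feynman--Kac formula \eqref{eq-FK-approx} --- which is precisely the fallback you mention at the end, and which turns out to be essential rather than optional.
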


\begin{remark}
    A typical example of the Feller process $X$ satisfying the conditions in Theorem~\ref{Thm-density-Skorohod-1} is the diffusion process governed by \eqref{x}. 
\end{remark}

 \begin{proof}
 {%\blue 
Utilizing Lemma~\ref{lem:local-absolute-continuity} by letting $F=u(t,x)$ and $\Gamma_m=\{y\in \bR; |y|>\frac1m\}$, we only need to show $\|D u(t,x)\|_{\cH}>0$ a.s. on $\Omega_m:=\{|u(t,x)|>\frac{1}{m}\}$ for  $m\in\mathbb N$. 
}

Denote by $\tilde L^2({\R^d})$ the class of   functions $f$ satisfying 
\begin{align*}
\|f\|^2_{\tilde L^2(\R^d)}:=\int_{\R^d}|\mathcal F f(\xi)|^2  \mu(d\xi)<\infty.
\end{align*}
Then it suffices to prove 
 \begin{equation} \label{integral-positive}
\int_{0}^t \|D_{r,\cdot} u(t,x)\|_{\tilde L^2(\R^d)}^2  dr >0  \quad  \mbox{a.s. on} \ \Omega_m.
\end{equation}
Indeed, if $\|D u(t,x)\|_{\cH}=0$, by Lemma \ref{e:tilde} below we have
 $\|D_{r,\cdot} u(t,x)\|_{\tilde L^2(\R^d)}^2 = 0$ for almost all $r \in \bR_+$, which contradicts with $\eqref{integral-positive}$. The proof of \eqref{integral-positive} is split into seven steps.
 
{\bf Step 1.} We show that $u \in \mathbb D^{2,p}$ for $p>1$. It follows from 
\cite[Equation (3.1)]{BQS2019EJP} (see also \cite{Nualart}) that  \[u\in \mathbb D^{k,p}
\quad \text{if} \quad
\sum_{n\geq 1}
n^{k/2}(p-1)^{n/2}
\big(\mathbb E |I_n( f_n(\cdot,t,x)|^2\big)^{1/2}
<\infty.\] Hence, we only need to show
\begin{align*}
    \sum_{n\geq 1}n(p-1)^{\frac{n}{2}}\big(\E|I_n(\tilde f_n(\cdot,t,x))|^2\big)^{\frac12}
    =\sum_{n\geq 1}n(p-1)^{\frac{n}{2}}\left(n!\|\tilde f_n(\cdot,t,x)\|_{\mathcal H^{\otimes n}}^2\right)^\frac12<\infty.
\end{align*}
{
By \eqref{eq:tilde f}, we have
\begin{align*}
&\sum_{n\geq 1}n(p-1)^{\frac n2}
\left(n!\|\tilde f_n(\cdot,t,x)\|_{\mathcal H^{\otimes n}}^2\right)^{\frac12}\\
&\leq 
e^{c_2Mt}
\sum_{n\geq 1}n
\left[
(p-1)CA_t
\int_{\R^d}\frac1{M+\Psi(\xi)}\mu(d\xi)
\right]^{\frac n2}.
\end{align*}
We may choose $M$ sufficiently large such that
$\left[
(p-1)CA_t
\int_{\R^d}\tfrac1{M+\Psi(\xi)}\mu(d\xi)
\right]^{\frac12}<1.$
Therefore,
\[
\sum_{n\geq 1}n
\left[
(p-1)CA_t
\int_{\R^d}\frac1{M+\Psi(\xi)}\mu(d\xi)
\right]^{\frac n2}<\infty,
\]
and consequently,
\[
\sum_{n\geq 1}n(p-1)^{\frac n2}
\left(n!\|\tilde f_n(\cdot,t,x)\|_{\mathcal H^{\otimes n}}^2\right)^{\frac12}
<\infty.
\]
}
{\bf Step 2.}
 For $f\in\mathcal H$, let $\tilde \cH$ be the space  of functions satisfying 
\begin{align*}
\|f\|_{\tilde {\cH}}^2:=\int_{\R_+}\int_{\R^d} \left|\cF f(s,\xi)\right|^2\mu(d\xi)ds=\int_{\R_+}\int_{\R^{2d}} f(s,x) f(s,y) \gamma(x-y) dxdyds<\infty.
\end{align*}
%{\red According to \cite[Lemma 5.1.1]{Nualart}, we have $\tilde\cH \subset \mathcal H$. } 
{%\blue 
Indeed, applying \cite[Lemma 5.1.1]{Nualart} with $H=1-\beta_0/2$ to
$\mathcal F f(\cdot,\xi)$ and using
$L^2(0,T)\subset L^{\frac{1}{1-\beta_0/2}}(0,T)$, we get
\[
\int_0^T\int_0^T
|\mathcal F f(t,\xi)|\,|\mathcal F f(s,\xi)|
|t-s|^{-\beta_0}\,dsdt
\leq C_T\int_0^T |\mathcal F f(s,\xi)|^2\,ds .
\]
After integrating with respect to $\mu(d\xi)$, this gives
$\|f\|_{\mathcal H}\leq C_T\|f\|_{\tilde{\mathcal H}}$ and hence $\tilde\cH \subset \mathcal H$.} In this step, we will show
that for any $T>0$, there exist some positive constants $C_T$ and $C_T'$ such that

 \begin{align}\label{estimate Du}
 \sup_{(t,x)\in [0,T]\times \bR^d}\sup_{r\in[0,t]} \bE \left[\| D_{r,\cdot} u(t,x)\|^2_{\tilde L^2(\R^d)}  \right]
    \le C_T,
\end{align}
and
\begin{align}\label{estimate DDu}
\sup_{(t,x) \in [0,T]\times\bR^d}\sup_{r\in[0,t]} \bE \left[  \big\| D^2_{\cdot,(r,\cdot)} u(t,x) \big\|_{\cH\times \tilde L^2(\R^d)}^2 \right]
    \le C_T'.
\end{align}

We approximate the Skorohod solution to \eqref{spde} using \eqref{approximate1} as in the proof of Theorem \ref{thm:FK}. Note that the solution $u^{\eps,\delta}(t,x)$ to \eqref{approximate1} is given by the Feynman--Kac formula \eqref{eq-FK-approx}. We will show that $D_{r,\cdot} u^{\eps,\delta}(t,x)$ converges in $ \tilde L^2(\bR^d)$ by using the same approach of Step 1 in the proof of Theorem~\ref{thm:FK}. For $r\in[0,t]$ and $z\in\R^d$,
\begin{align} \label{eq-Du}
    D_{r,z} u^{\eps,\delta}(t,x)
    =& \bE_X \left[ u_0(X_t^x) \exp \left( W(A_{t,x}^{\eps,\delta}) - \dfrac{1}{2} \left\| A_{t,x}^{\eps,\delta} \right\|_{\cH}^2 \right) D_{r,z} \left( W(A_{t,x}^{\eps,\delta}) - \dfrac{1}{2} \left\| A_{t,x}^{\eps,\delta} \right\|_{\cH}^2 \right) \right] \nonumber \\
    =& \bE_X \left[ u_0(X_t^x) \exp \left( W(A_{t,x}^{\eps,\delta}) - \dfrac{1}{2} \left\| A_{t,x}^{\eps,\delta} \right\|_{\cH}^2 \right) A_{t,x}^{\eps,\delta} (r,z) \right],
\end{align}
where we recall that $A_{t,x}^{\eps, \delta}$ is given in \eqref{A}. Thus, we have
\begin{align}\label{eq-Dutilde}
&\E\left[\la D_{r,\cdot}u^{\varepsilon,\delta}(t,x),D_{r,\cdot}u^{\varepsilon',\delta'}(t,x)\ra_{\tilde L^2(\R^d)}\right]=\E\left[\int_{\R^d}\cF D_{r,\cdot}u^{\varepsilon,\delta}(t,x)(\xi)\overline{\cF D_{r,\cdot}u^{\varepsilon',\delta'}(t,x)(\xi)}\mu(d\xi)\right] \nonumber\\&=\E\left[\int_{\R^{2d}}D_{r,y}u^{\varepsilon,\delta}(t,x)D_{r,z}u^{\varepsilon',\delta'}(t,x)\gamma(y-z)dydz\right]\nonumber\\ 
  & = \bE \bigg[  u_0(X_t^x) u_0(\tilde X_t^x) \exp \left( W \left(A_{t,x}^{\eps,\delta} + \tilde A_{t,x}^{\eps',\delta'} \right) - \dfrac{1}{2} \left\| A^{\epsilon,\delta}_{t,x} \right\|_{\cH}^2 - \dfrac{1}{2} \left\| \tilde A_{t,x}^{\eps',\delta'} \right\|_{\cH}^2 \right) \\
    & \hspace{2em} \times \int_0^t\int_0^t \int_{\R^{2d}}\varphi_\delta(t-s_1-r) \varphi_{\delta'}(t-s_2-r) q_\eps(X_{s_1}^x-y) q_{\eps'}(\tilde X_{s_2}^x-z) ds_1ds_2\gamma(y-z)dy dz \bigg].\nonumber
\end{align}
By applying Parseval-Plancherel identity, it follows that
\begin{align}\label{eq-apply-Parseval}
\int_{\R^{2d}} q_\eps(X_{s_1}^x-y) q_{\eps'}(\tilde X_{s_2}^x-z) \gamma(y-z)dy dz  \leq C\int_{\R^d}\cF q_{\varepsilon}(\xi)\cF q_{\varepsilon'}(\xi)\mu(d\xi)<\infty.
\end{align}
Combining the fact  
\begin{equation}\label{e:varphi-bound}
\int_0^t\int_0^t\varphi_\delta(r-s_1)\varphi_{\delta'}(r-s_2)ds_1ds_2\leq 1,
\end{equation}
 we have
\begin{align}\label{eq:DD}
&\E\left[\la D_{r,\cdot}u^{\varepsilon,\delta}(t,x),D_{r,\cdot}u^{\varepsilon',\delta'}(t,x)\ra_{\tilde L^2(\R^d)}\right]\nonumber\\ & \leq C\bE \bigg[  u_0(X_t^x) u_0(\tilde X_t^x) \exp \left( W \left(A_{t,x}^{\eps,\delta} + \tilde A_{t,x}^{\eps',\delta'} \right) - \dfrac{1}{2} \left\| A^{\epsilon,\delta}_{t,x} \right\|_{\cH}^2 - \dfrac{1}{2} \left\| \tilde A_{t,x}^{\eps',\delta'} \right\|_{\cH}^2 \right)\bigg].
\end{align}
In view of  \eqref{eq:DD}, the boundedness of $u_0$ and \eqref{ineq-exp-nondiag}  yield that 
\begin{align}\label{eq-Dutilde-finiteness}
\sup_{r\in[0,t]}\sup_{\varepsilon,\delta,\varepsilon',\delta'>0}\E\left[\la D_{r,\cdot}u^{\varepsilon,\delta}(t,x),D_{r,\cdot}u^{\varepsilon',\delta'}(t,x)\ra_{\tilde L^2(\R^d)}\right]\leq C_t,
\end{align}
where $C_t$ is a constant increasing in $t$. 
By \eqref{eq-Dutilde} and \eqref{eq-convergence-inner prod},  we have 
\begin{align*}
    & \lim_{\eps,\delta,\eps',\delta' \to 0} \E\bigg[\la D_{r,\cdot}u^{\varepsilon,\delta}(t,x),D_{r,\cdot}u^{\varepsilon',\delta'}(t,x)\ra_{\tilde L^2(\R^d)}\bigg] \nonumber \\
   & =  \bE \bigg[ u_0(X_t^x) u_0(\tilde X_t^x) \exp \left( \int_0^t\int_0^t |s_1-s_2|^{-\beta_0} \gamma(X_{s_1}^x-\tilde X_{s_2}^x) ds_1ds_2\right) \int_0^t \gamma \left( X_s^{x} - \tilde X_s^{x} \right)  ds \bigg],
\end{align*}
which is  finite due to \eqref{eq-Dutilde-finiteness}.
Thus, we get that
\begin{align*}
    & \lim_{\eps,\delta,\eps',\delta' \to 0} \bE\left[ \left\|D_{r,\cdot}u^{\varepsilon,\delta}(t,x)-D_{r,\cdot}u^{\varepsilon',\delta'}(t,x) \right\|_{\tilde L^2(\R^d)}^2\right]
    = 0,
\end{align*}
which also yields, noting that $C_t$ in \eqref{eq-Dutilde-finiteness} is finite,  \begin{align*}
\lim_{\eps,\delta,\eps',\delta' \to 0} \bE \left[\left\|Du^{\varepsilon,\delta}(t,x)-Du^{\varepsilon',\delta'}(t,x) \right\|_{\tilde {\mathcal H}}^2\right]
    = 0.
\end{align*}
Hence, $\{D u^{\eps,\delta}(t,x)\}_{\varepsilon,\delta>0}$ is a Cauchy sequence  in $\tilde {\cH}$ with the limit denoted by $\tilde D u(t,x)$. 
It has been proved that $\lim_{\eps,\delta\to 0}\E\left[\|D u^{\eps,\delta}(t,x)-D u(t,x)\|_\mathcal H^2\right]=0$ in the proof of Theorem \ref{thm:FK}. Since $\tilde \cH\subset \cH $, we have  $\tilde D u(t,x) = D u(t,x)$. It follows that \begin{align*}
\lim_{\eps,\delta\to 0} \bE \left[\left\|D_{r,\cdot}u^{\varepsilon,\delta}(t,x)-D_{r,\cdot}u(t,x) \right\|_{\tilde L^2{(\R^d)}}^2\right]
    = 0.
\end{align*}
Therefore, we have
\begin{align*}
    \sup_{(t,x) \in [0,T]\times\bR^d}\sup_{r\in[0,t]} \bE \left[\|D_{r,\cdot}u(t,x)\|_{\tilde L^2(\R^d)} ^2\right]
    &\le \sup_{(t,x) \in [0,T]\times\bR^d} \sup_{r\in[0,t]} \sup_{\eps,\delta>0} \bE \left[\|D_{r,\cdot}u^{\eps,\delta}(t,x)\|_{\tilde L^2(\R^d)} ^2\right]\\
    &\le \sup_{(t,x) \in [0,T]\times\bR^d}\sup_{r\in[0,t]}  C_t
    = C_T,
\end{align*}
which proves \eqref{estimate Du}.

Now we prove \eqref{estimate DDu}. The second order Malliavin derivative can be written as
\begin{align*} 
    D^2_{\cdot,(r,z)} u^{\eps,\delta}(t,x)
  &  = \bE_X \left[ u_0(X_t^x) \exp \left( W(A_{t,x}^{\eps,\delta}) - \dfrac{1}{2} \left\| A_{t,x}^{\eps,\delta} \right\|_{\cH}^2 \right)A_{t,x}^{\eps,\delta}(r,z) A_{t,x}^{\eps,\delta}(\cdot) \right],
    \end{align*}
and hence
\begin{align} \label{eq-4.86''}
  &\bE \left[  \Big< D^2_{\cdot,(r,\cdot)} u^{\eps,\delta}(t,x), D^2_{\cdot,(r,\cdot)} u^{\epsilon',\delta'}(t,x)\Big>_{\cH\times \tilde L^2(\R^d)} \right]
   \nonumber \\&=\bE \left[ \int_{\bR^{2d}} \la D^2_{\cdot,(r,y)} u^{\eps,\delta}(t,x), D^2_{\cdot,(r,z)} u^{\epsilon',\delta'}(t,x)\ra_\mathcal H \gamma(y-z) dydz \right] \nonumber \\
  & = \bE \bigg[  u_0(X_t^x) u_0(\tilde X_t^x) \exp \left( W \left(A_{t,x}^{\eps,\delta} + \tilde A_{t,x}^{\eps',\delta'} \right) - \dfrac{1}{2} \left\| A^{\epsilon,\delta}_{t,x} \right\|_{\cH}^2 - \dfrac{1}{2} \left\| \tilde A_{t,x}^{\eps,\delta} \right\|_{\cH}^2 \right) \la A^{\epsilon,\delta}_{t,x},\tilde A^{\epsilon',\delta'}_{t,x}\ra_\mathcal H  \nonumber\\
    &\qquad \qquad \times \int_0^t\int_0^t \int_{\bR^{2d}} \varphi_\delta(t-s_1-r) \varphi_{\delta'}(t-s_2-r) q_\eps(X_{s_1}^x-y) q_{\eps'}(\tilde X_{s_2}^x-z) ds_1ds_2\gamma(y-z) dydz \bigg] \nonumber \\
    & = \bE \bigg[ u_0(X_t^x) u_0(\tilde X_t^x) \exp \left( \left\langle A_{t,x}^{\eps,\delta}, \tilde A_{t,x}^{\eps',\delta'} \right\rangle_{\cH} \right)\left\langle A_{t,x}^{\eps,\delta}, \tilde A_{t,x}^{\eps',\delta'} \right\rangle_{\cH} \nonumber \\
    &\hspace{4em}\times \int_0^t\int_0^t\int_{\bR^{2d}} \varphi_\delta(r-s_1) \varphi_{\delta'}(r-s_2) q_\eps(X_{s_1}^x-y) q_{\eps'}(\tilde X_{s_2}^x-z) ds_1ds_2\gamma(y-z) dydz \bigg],   
\end{align} 
where we have applied %integrate $dz$ using \eqref{eq-4.77}, 
the change of variable $r \to t-r$, and compute $\bE_W$ first in the second equality.
By H\"{o}lder's inequality and \eqref{eq-apply-Parseval}, we have
\begin{align} \label{eq-4.86}
    &\bE \left[  \Big< D^2_{\cdot,(r,\cdot)} u^{\eps,\delta}(t,x), D^2_{\cdot,(r,\cdot)} u^{\epsilon',\delta'}(t,x)\Big>_{\cH\times \tilde L^2(\R^d)} \right] \nonumber \\
    & \le \left( \bE \bigg[ \left| u_0(X_t^x) u_0(\tilde X_t^x) \right|^4 \bigg] \right)^{\frac{1}{4}} \bigg(\E \left[ \exp \left( 4 \left\langle A_{t,x}^{\eps,\delta}, \tilde A_{t,x}^{\eps',\delta'} \right\rangle_{\cH} \right) \left| \left\langle A_{t,x}^{\eps,\delta}, \tilde A_{t,x}^{\eps',\delta'} \right\rangle_{\cH} \right|^4 \right]\bigg)^{\frac{1}{4}}\nonumber \\
    & \hspace{2em}\times \left( \bE \bigg[ \left( \int_0^t\int_0^t\varphi_\delta(r-s_1) \varphi_{\delta'}(r-s_2) \int_{\R^d}\cF q_{\varepsilon}(\xi)\cF q_{\varepsilon'}(\xi)\mu(d\xi)  \right)^2 \bigg] \right)^{1/2}.
\end{align}

By \eqref{A} and the non-negativity of $\gamma$, we can deduce that $\left\langle A_{t,x}^{\eps,\delta}, \tilde A_{t,x}^{\eps',\delta'} \right\rangle_{\cH} \ge 0$. Using the inequality $e^{4x} \ge x^4$ for $x>0$, we have
\begin{align} \label{eq-exp(4x)x^4}
    \exp \left( 4 \left\langle A_{t,x}^{\eps,\delta}, \tilde A_{t,x}^{\eps',\delta'} \right\rangle_{\cH} \right) \left| \left\langle A_{t,x}^{\eps,\delta}, \tilde A_{t,x}^{\eps',\delta'} \right\rangle_{\cH} \right|^4
    \le \exp \left( 8 \left\langle A_{t,x}^{\eps,\delta}, \tilde A_{t,x}^{\eps',\delta'} \right\rangle_{\cH} \right).
\end{align}
Thus, by the boundedness of $u_0$, \eqref{eq-exp(4x)x^4}, \eqref{ineq-exp-nondiag},  \eqref{eq-apply-Parseval} and \eqref{e:varphi-bound}, one can deduce from \eqref{eq-4.86} that
\begin{align} \label{eq-4.89}
\sup_{r\in[0,t]}\sup_{\eps,\delta,\eps',\delta'>0} \bE \left[  \Big< D^2_{\cdot,(r,\cdot)} u^{\eps,\delta}(t,x), D^2_{\cdot,(r,\cdot)} u^{\epsilon',\delta'}(t,x)\Big>_{\cH\times \tilde L^2(\R^d)} \right]
    \le C_t'.
\end{align}
Moreover, by \eqref{eq-convergence-inner prod} and \eqref{eq-4.86''}, we have the following convergence 
\begin{align*}
&\lim\limits_{\varepsilon,\delta,\varepsilon',\delta'\to 0}\bE \left[  \Big< D^2_{\cdot,(r,\cdot)} u^{\eps,\delta}(t,x), D^2_{\cdot,(r,\cdot)} u^{\epsilon',\delta'}(t,x)\Big>_{\cH\times \tilde L^2(\R^d)} \right]\\&=\bE \bigg[ u_0(X_t^x) u_0(\tilde X_t^x) \exp \left( \int_0^t\int_0^t |s_1-s_2|^{-\beta_0} \gamma(X_{s_1}^x-\tilde X_{s_2}^x) drds\right) \int_0^t \gamma \left( X_s^{x} - \tilde X_s^{x} \right)  ds\\&\hspace{4em} \times\int_0^t\int_0^t |s_1-s_2|^{-\beta_0} \gamma(X_{s_1}^x-\tilde X_{s_2}^x) drds\bigg].
\end{align*}

Due to \eqref{eq-4.89}, the limit above is finite. Similar to the estimate of the first order derivative case, we can conclude that 
\begin{align*}
\lim_{\varepsilon,\delta \to 0}\E\Big[\|D^2_{\cdot,(r,\cdot)}u^{\varepsilon,\delta}(t,x)-D^2_{\cdot,(r,\cdot)}u(t,x)\|_{\cH\times \tilde L^2(\R^d)}\Big]=0.
\end{align*}
Therefore, we have
\begin{align*}
    &\sup_{(t,x) \in [0,T]\times\bR^d}\sup_{r\in[0,t]} \bE \left[\|D^2_{\cdot,(r,\cdot)}u(t,x)\|_{\cH\times \tilde L^2{(\R^d)}} ^2\right] 
   \\ &\le \sup_{(t,x) \in [0,T]\times\bR^d} \sup_{r\in[0,t]}\sup_{\eps,\delta>0} \bE\left[ \|D^2_{\cdot,(r,\cdot)}u^{\varepsilon,\delta}(t,x)\|_{\cH\times \tilde L^2(\R^d)} ^2 \right]\\
    &\le \sup_{(t,x) \in [0,T]\times\bR^d}\sup_{r\in[0,t]} C_t'
    = C_T',
\end{align*}
and \eqref{estimate DDu} is proved.

{\bf Step 3.} In this step, we shall estimate the following functions:
\begin{align}\label{eq-phi}
    \phi(t) = \int_0^t\int_0^t \int_{\bR^{2d}} p_{s_1}^{(x)}(y_1) p_{s_2}^{(x)}(y_2) |s_1-s_2|^{-\beta_0} \gamma(y_1-y_2) dy_1dy_2 ds_1ds_2,
\end{align}
and
 \begin{align*} 
    \psi(t) = \int_0^t \|p_{s}^{(x)}(\cdot)\|_{\tilde L^2(\R^d)}^2ds.
\end{align*}

By Fourier transform we have
\[\psi(t)=\int_0^t\int_{\R^{2d}}p_s^{(x)}(y)p_s^{(x)}(z)\gamma(y-z)dydzds=\int_0^t \int_{\R^d} |(\mathcal 
 Fp_s^{(x)})(\xi)|^2\mu(d\xi)ds.\]
Thus we have that $\psi(t)<\infty$ by Assumption \ref{H} and \eqref{dalang-SKo}. 
 % {\red   According to    \cite[Lemma 6.4]{BQS2019EJP}, we have
%\begin{align*}
%\phi(t)\leq A_t\psi(t)<\infty,
%\end{align*}
%where $A_t=2\int_0^t|s|^{-\beta_0}ds$. }
{%\blue
Moreover, by Cauchy's inequality,
\[
\int_{\R^{2d}}p_{s_1}^{(x)}(y_1)p_{s_2}^{(x)}(y_2)\gamma(y_1-y_2)dy_1dy_2
\le
\|p_{s_1}^{(x)}(\cdot)\|_{\tilde L^2(\R^d)}
\|p_{s_2}^{(x)}(\cdot)\|_{\tilde L^2(\R^d)} .
\]
Hence, using $2ab\le a^2+b^2$, we obtain
\begin{equation}\label{phipsi}
\phi(t)\le A_t\psi(t)<\infty,
\end{equation}
where we recall $A_t=2\int_0^t s^{-\beta_0}ds<\infty$.}

On the other hand, by Assumption \ref{H'}  we get
 \begin{align*}
\psi(t)&=\int_0^t\int_{\R^{2d}}p_s^{(x)}(y)p_s^{(x)}(z)\gamma(y-z)dydzds\\&\geq \tilde c_0^2\int_0^t\int_{\R^{2d}}\tilde P_s(y-x)\tilde P_s(z-x)\gamma(y-z)dydzds\\&=\tilde c_0^2\int_0^t\int_{\R^{d}}|\cF \tilde P_s(\xi)|^2 \mu(d\xi)ds\\&\geq \tilde c_0^2\tilde c_1^2\int_0^t\int_{\R^d}\exp\left(-2\tilde c_2 s \tilde \Psi(\xi)\right)\mu(d\xi)ds.
\end{align*}
Thus, we have 
\begin{align} \label{ineq-psi}
    \psi(t) \ge \tilde C_0 t,
\end{align}
where 
$\tilde C_0:=\tilde c_0^2\tilde c_1^2 \int_{\R^d}\exp\left(-2T \tilde c_2  \tilde \Psi(\xi)\right)\mu(d\xi)>0$ due to the assumption $\mu\big(\{\xi\in\R^d:\tilde\Psi(\xi)<+\infty\}\big)>0$.

%{\red the assumption $\{\xi\in\R^d: \tilde \Psi(\xi)<+\infty\} \cap \mathrm{Supp}(\mu)$ has positive Lebesgue measure.} {\blue Here we should assume that
%%\[
%\mu\big(\{\xi\in\R^d:\tilde\Psi(\xi)<+\infty\}\big)>0,
%\]
%which guarantees the positivity of the above integral, since
%$\exp(-2T\tilde c_2\tilde\Psi(\xi))>0$ on
%$\{\xi\in\R^d:\tilde\Psi(\xi)<+\infty\}$.

% {\red For $t \le 1$, using $-2c_2 s \Psi(\xi) \ge -2c_2 t \Psi(\xi) \ge -2c_2 \Psi(\xi)$, we have
% \begin{align*}
%    \psi(t) \ge \tilde c_0^2\tilde c_1^2 t \int_{\R^d}\exp\left(-2\tilde c_2 \tilde \Psi(\xi)\right)\mu(d\xi)
%    \ge \tilde c_0^2\tilde c_1^2 t \int_A\exp\left(-2\tilde c_2 \tilde \Psi(\xi)\right)\mu(d\xi)
%    >0.
% \end{align*}
% }

  {\bf Step 4.} 
Denote
\begin{align}\label{e:Kr}
K^{(r)}((s,y),\cdot)=\mathbf {1}_{[0,t]}(s)p_{t-s}^{(x)}(y)D_{r,\cdot}u(s,y).
%&K_n^{(r)}((s,y),z)=\mathbb I_{[0,t]}(s)p_{t-s}^{(x)}(y)D_{r,z}u_{n}(s,y),
\end{align}
{
For fixed $r\in[0,t]$, using the
Cauchy--Schwarz inequality in $\tilde L^2(\R^d)$, we get
\begin{align*}
\E\|K^{(r)}\|^2_{|\mathcal H|\otimes \tilde L^2(\R^d)} &\leq
\int_0^t\int_0^t\int_{\R^{2d}}
p_{t-s_1}^{(x)}(y_1)p_{t-s_2}^{(x)}(y_2)
|s_1-s_2|^{-\beta_0}\gamma(y_1-y_2)  \\
&\qquad\qquad\times
\E\Big[
\|D_{r,\cdot}u(s_1,y_1)\|_{\tilde L^2(\R^d)}
\|D_{r,\cdot}u(s_2,y_2)\|_{\tilde L^2(\R^d)}
\Big]dy_1dy_2ds_1ds_2 .
\end{align*}
The estimate \eqref{estimate Du} and the Cauchy--Schwarz inequality yields
\[
\E\Big[
\|D_{r,\cdot}u(s_1,y_1)\|_{\tilde L^2(\R^d)}
\|D_{r,\cdot}u(s_2,y_2)\|_{\tilde L^2(\R^d)}
\Big]
\leq C_t .
\]
Hence, by the change of variables $s_i\to t-s_i$, $i=1,2$, we obtain
\begin{equation}\label{e:Kr'}
\E\|K^{(r)}\|^2_{|\mathcal H|\otimes \tilde L^2(\R^d)}
\leq C_t\phi(t)<\infty .
\end{equation}
Similarly, noting
\[
DK^{(r)}((s,y),\cdot)
=
\mathbf 1_{[0,t]}(s)p_{t-s}^{(x)}(y)
D^2_{\cdot,(r,\cdot)}u(s,y),
\]
we have
\begin{align*}
&\E\|DK^{(r)}\|^2_{|\mathcal H|\otimes \mathcal H\otimes \tilde L^2(\R^d)} \\
&\leq
\int_0^t\int_0^t\int_{\R^{2d}}
p_{t-s_1}^{(x)}(y_1)p_{t-s_2}^{(x)}(y_2)
|s_1-s_2|^{-\beta_0}\gamma(y_1-y_2) \\
&\quad \times
\E\Big[
\|D^2_{\cdot,(r,\cdot)}u(s_1,y_1)\|_{\mathcal H\otimes \tilde L^2(\R^d)}
\|D^2_{\cdot,(r,\cdot)}u(s_2,y_2)\|_{\mathcal H\otimes \tilde L^2(\R^d)}
\Big]\,dy_1dy_2ds_1ds_2 .
\end{align*}
By the Cauchy--Schwarz inequality and \eqref{estimate DDu}, we get
\[
\E\Big[
\|D^2_{\cdot,(r,\cdot)}u(s_1,y_1)\|_{\mathcal H\otimes \tilde L^2(\R^d)}
\|D^2_{\cdot,(r,\cdot)}u(s_2,y_2)\|_{\mathcal H\otimes \tilde L^2(\R^d)}
\Big]
\leq C_t' ,
\]
and consequently \begin{equation}\label{e:DKr}
\E\|DK^{(r)}\|^2_{|\mathcal H|\otimes \mathcal H\otimes \tilde L^2(\R^d)}
\leq C_t'\phi(t)<\infty .
\end{equation}
}
%{\red Similarly to the proof of \cite[lemma 5.1]{BQS2019EJP}, using the estimates \eqref{estimate Du} and \eqref{estimate DDu} obtained in Step 2 and \eqref{phipsi} in Step 3, we can deduce 
%\begin{align*}
%&\E\|K^{(r)}\|_{|\mathcal H|\otimes \tilde L^2{(\R^d)}}\leq C_t\phi(t)<\infty,\\
%&\E\|DK^{(r)}\|_{|\mathcal H|\otimes \mathcal H\otimes \tilde L^2{(\R^d)}}\leq C_t'\phi(t)<\infty.
%\end{align*}}
It follows  from \eqref{e:Kr'} and \eqref{e:DKr} that for $r\in[0,t]$,
$K^{(r)}\in\mathbb D^{1,2}(\mathcal H\otimes \tilde L^2{(\R^d}))$. Thus  $K^{(r)}\in \text{Dom}(\tilde{\boldsymbol{\delta}})$, where $\tilde{\boldsymbol{\delta}}$ is the $\tilde L^2{(\R^d})$-valued Skorohod integral (see \cite[Section 3]{NQ07} or \cite[Section 6]{Balan12} for the definition of Hilbert-space-valued Skorohod integral).  Therefore, $\tilde{\boldsymbol{\delta}}(K^{(r)})= \int_0^t \int_{\bR^d} p_{t-s}^{(x)}(y) D_{r,\cdot} u(s,y) W(d s, dy)$ is a well-defined $\tilde L^2{(\R^d})$-valued Skorohod integral.

By  \cite[Proposition 1.3.8]{Nualart}, we know that Malliavin derivative $Du$ of the solution satisfies the following equation in $L^2(\Omega;\cH)$: 
\begin{align} \label{eq-Du-H}
    D_{\cdot}u(t,x)
    =p_{t-\cdot}^{(x)}(\cdot) u(\cdot) + \int_0^t \int_{\bR^d} p_{t-s}^{(x)}(y) D_{\cdot} u(s,y) W(d s, dy),
\end{align}
which also holds in $L^2(\Omega;\tilde H)$ by Lemma \ref{e:tilde}. Thus, for almost all $r \in \bR_+$, the following equality holds in $L^2(\Omega;\tilde L^2(\bR^d))$:
\begin{align} \label{eq-Du-L2}
    D_{r,\cdot}u(t,x)
    =p_{t-r}^{(x)}(\cdot) u(r,\cdot) + \int_0^t \int_{\bR^d} p_{t-s}^{(x)}(y) D_{r,\cdot} u(s,y) W(d s, dy).
\end{align}
Then  the continuity in time of  $D_{r,\cdot}u(t,x)$ yields that \eqref{eq-Du-L2} holds in $L^2(\Omega;\tilde L^2(\bR^d))$  for all $r \in [0,t]$. 

Using the inequality 
$\|a+b\|^2\geq \frac 12\|a\|^2-\|b\|^2$,
 it follows that
\begin{equation}\label{eq:33}
    \begin{aligned} 
    \int_{0}^t \| D_{r,\cdot} u(t,x)\|^2_{\tilde L^2(\R^d)}   dr
    \ge& \int_{t-\delta}^t \| D_{r,\cdot} u(t,x)\|^2_{\tilde L^2(\R^d)}  dr  \\
    \geq& \frac{1}{2} \int_{t-\delta}^t \| p_{t-r}^{(x)}(\cdot)u(r,\cdot)\| ^2_{\tilde L^2(\R^d)} dr - I(\delta),
\end{aligned}
\end{equation}
where { $\delta\in(0,1)$ and}
\begin{equation}\label{e:I-delta}
\begin{aligned}
    I(\delta) & = \int_{t-\delta}^t \left\| \int_0^t \int_{\bR^d} p_{t-s}^{(x)}(y) D_{r,\cdot} u(s,y) W(d s, dy) \right\|^2 _{\tilde L^2(\R^d)} dr \\
    & = \int_{t-\delta}^t \left\| \int_{t-\delta}^t \int_{\bR^d} p_{t-s}^{(x)}(y) D_{r,\cdot} u(s,y) W(d s, dy) \right\|^2 _{\tilde L^2(\R^d)} dr.
\end{aligned}
\end{equation}
Here, the second equality is due to the fact that $D_{r,\cdot}u(s,y)=0$ if $r>s$.

By the inequality $\|a-b\|^2 \ge \frac{1}{2} \|a\|^2 - \|b\|^2$, we have that on $\Omega_m:=\{|u(t,x)|>\frac{1}{m}\}$,
\begin{align*}
   & \int_{t-\delta}^t \| p_{t-r}^{(x)}(\cdot) u(r,\cdot)\|_{\tilde L^2(\R^d)}^2  dr=\int_{t-\delta}^t \|p_{t-r}^{(x)}(\cdot)u(t,x)- p_{t-r}^{(x)}(\cdot) (u(t,x)-u(r,\cdot))\|_{\tilde L^2(\R^d)}^2  dr\\ &\geq\int_{t-\delta}^t\Big( \frac 12 \|p_{t-r}^{(x)}(\cdot)u(t,x)\|_{\tilde L^2(\R^d)}^2-\|p_{t-r}^{(x)}(\cdot) (u(t,x)-u(r,\cdot))\|_{\tilde L^2(\R^d)}^2\Big)  dr\\& =\frac 12\int_{t-\delta}^t \int_{\R^{2d}}p_{t-r}^{(x)}(z_1)p_{t-r}^{(x)}(z_2)\gamma(z_1-z_2)(u(t,x))^2dz_1dz_2dr\\&\hspace{1em}-\int_{t-\delta}^t \int_{\R^{2d}}p_{t-r}^{(x)}(z_1)p_{t-r}^{(x)}(z_2)\gamma(z_1-z_2)(u(t,x)-u(r,z_1))(u(t,x)-u(r,z_2))dz_1dz_2dr\\
    & \geq \frac{1}{2 m^2} \psi(\delta) -\frac 12\int_{t-\delta}^t \int_{\R^{2d}}p_{t-r}^{(x)}(z_1)p_{t-r}^{(x)}(z_2)\gamma(z_1-z_2)\big[(u(t,x)-u(r,z_1))^2\\&\hspace{12em}+(u(t,x)-u(r,z_2))^2\big]dz_1dz_2dr\\&\geq \frac{1}{2 m^2} \psi(\delta)-J(\delta),
\end{align*}
where
\begin{align*}
    J(\delta) &:= \int_{t-\delta}^t \int_{\R^{2d}}p_{t-r}^{(x)}(z_1)p_{t-r}^{(x)}(z_2)\gamma(z_1-z_2)(u(t,x)-u(r,z_1))^2dz_1dz_2dr.
\end{align*}
So, by \eqref{eq:33}, we have that
\begin{equation} \label{LB-D}
    \int_{0}^t \| D_{r,\cdot} u(t,x)\|^2_{\tilde L^2(\R^d)}   dr
    \geq \frac{1}{4 m^2} \psi(\delta) - \frac12 J(\delta) - I(\delta).
\end{equation}

{\bf Step 5.}
Let us now estimate the first moment of $J(\delta)$. To start with, we have
\begin{align*}
    \E[|J(\delta)|]& =\int_{t-\delta}^t \int_{\R^{2d}}p_{t-r}^{(x)}(z_1)p_{t-r}^{(x)}(z_2)\gamma(z_1-z_2)\E\big[|u(t,x)-u(r,z_1)|^2\big]dz_1dz_2dr\\&=\int_{t-\delta}^t \int_{\R^{d}}\int_{B(x;\delta^{\frac {1}{3}})}p_{t-r}^{(x)}(z_1)p_{t-r}^{(x)}(z_2)\gamma(z_1-z_2)\E\big[|u(t,x)-u(r,z_1)|^2\big]dz_1dz_2dr\\&\hspace{1em}+\int_{t-\delta}^t \int_{\R^{d}}\int_{\R^d\setminus B(x;\delta^{\frac {1}{3}})}p_{t-r}^{(x)}(z_1)p_{t-r}^{(x)}(z_2)\gamma(z_1-z_2)\E\big[|u(t,x)-u(r,z_1)|^2\big]dz_1dz_2dr\\&=:J_1(\delta)+J_2(\delta),
\end{align*}
where $B(x;\delta^{\frac 13})=\{y\in\R^d:|x-y|<\delta^{\frac 13}\}$.
For the first term, we have that
\begin{align}\label{e:J1} \nonumber
    J_1(\delta)& \leq \int_{t-\delta}^t \int_{\R^{d}}\int_{B(x;\delta^{\frac {1}{3}})}p_{t-r}^{(x)}(z_1)p_{t-r}^{(x)}(z_2)\gamma(z_1-z_2)\sup_{t-\delta<s<t} \sup_{|x-y|<\delta^{\frac {1}{3}} }\left(\E\big[|u(t,x)-u(s,y)|^2\big]\right)dz_1dz_2dr \\
    & \leq  g_{t,x}(\delta) \psi(\delta),
\end{align}
with
\[
g_{t,x}(\delta):= \sup_{|t-s|<\delta }\sup_{|x-y|<\delta^{\frac {1}{3}} } \left(\E
\big[|u(t,x)-u(s,y)|^2\big]\right).
\]
According to Theorem \ref{continuity of u}, $u$ is
continuous in $L^2(\Omega)$ in both time and space. Hence, 
\begin{equation}\label{e:gtx0}
\lim_{\delta \to 0}g_{t,x}(\delta) =0.
\end{equation}
{ For the second term,  we have
\begin{align*}
J_2(\delta)&\leq  4\sup_{(s,y)\in[0,t]\times \R^d}\E\big[|u(s,y)|^2\big]\int_{0}^\delta \int_{\R^d\setminus B(x;\delta^{\frac 13})}P_{r}(z_1-x)\gamma(z_1-z_2)dz_1\int_{\R^d}p_{r}^{(x)}(z_2)dz_2 dr\nonumber\\
&\leq C \delta^{d/2} \int_{0}^\delta \int_{\R^d\setminus B(x;\delta^{\frac 13})}P_{2r}(\delta^{\frac 13})P_{2r}(z_1-x)\gamma(z_1-z_2)dz_1\int_{\R^d}p_{r}^{(x)}(z_2)dz_2 dr\nonumber\\
&\leq C \delta^{d/2} \int_{0}^\delta P_{2r}(\delta^{\frac 13})dr\int_{\bR^{2d}} P_{2r}(z_1-x) P_r(z_2-x) \gamma(z_1-z_2)dz_1dz_2 \\
&\leq C \delta^{d/2} \int_{0}^\delta P_{2r}(\delta^{\frac 13})dr  \nonumber\\
&= C \delta^{\frac d2+1} { P_{2\varepsilon}(\delta^\frac13)},\nonumber
\end{align*}}
%{\blue
%For the second term, using the elementary inequality
%\[
%\E\big[|u(t,x)-u(r,z_1)|^2\big]
%\leq 2\E|u(t,x)|^2+2\E|u(r,z_1)|^2
%\leq C_t,
%\]
%we obtain, by Assumption~\ref{H} and the change of variables $s=t-r$,
%\begin{align*}
%J_2(\delta)
%&\leq C
%\int_0^\delta
%\int_{\R^d\setminus B(x;\delta^{1/3})}
%\int_{\R^d}
%P_s(z_1-x)P_s(z_2-x)\gamma(z_1-z_2)
%dz_2dz_1ds .
%\end{align*}
%Since
%$P_s(z)=s^{-d/2}\exp\left(-\frac{c|z|^2}{s}\right),$
%for $0<s\leq\delta$ and $|z_1-x|\geq\delta^{1/3}$, we have
%\[
%P_s(z_1-x)
%\leq
%C\exp\left(-c\delta^{-1/3}\right)P_{2s}(z_1-x).
%\]
%Hence,
%\begin{align*}
%J_2(\delta)
%&\leq
%C\exp\left(-c\delta^{-1/3}\right)
%\int_0^\delta
%\int_{\R^{2d}}
%P_{2s}(z_1-x)P_s(z_2-x)\gamma(z_1-z_2)
%dz_1dz_2ds  \\
%&=
%C\exp\left(-c\delta^{-1/3}\right)
%\int_0^\delta
%\int_{\R^d}
%\cF P_{2s}(\xi)\overline{\cF P_s(\xi)}
%\mu(d\xi)ds  \\
%&\leq
%C\exp\left(-c\delta^{-1/3}\right)
%\int_0^\delta
%\int_{\R^d}
%\exp\left(-cs\Psi(\xi)\right)\mu(d\xi)ds .
%\end{align*}
%By the Dalang condition \eqref{dalang-SKo},
%\[
%\int_0^T\int_{\R^d}
%\exp\left(-cs\Psi(\xi)\right)\mu(d\xi)ds<\infty .
%\]
%Therefore,
%\[
%J_2(\delta)
%\leq C\exp\left(-c\delta^{-1/3}\right)=o(\delta).
%\]
%}
where  in the second step we use the identity $P_r(x)=2^d r^{d/2} \big(P_{2r}(x)\big)^2$ recalling that by assumption $P_t(x) = t^{-d/2} \exp\left(-c|x|^2/t\right)$ and in the last step $\varepsilon\in(0,\delta)$. Noting that $\lim_{
\delta\to0}P_{2\varepsilon}(\delta^{\frac13})=0$, 
we have $J_2(\delta)=o(\delta)$. Combining this with  \eqref{e:J1}, we get 
\begin{align}\label{e:J}
\E[J(\delta)]\leq g_{t,x}(\delta)\psi(\delta)+o(\delta).
\end{align}

 {\bf Step 6.} Now we proceed to bound $\E[I(\delta)]$ for the term $I(\delta)$ on the right-hand side of \eqref{LB-D}. Recall that $I(\delta)$ is given in \eqref{e:I-delta}. By Fubini's theorem,
\begin{align*}
&\E[I(\delta)]
    =\int_{t-\delta}^t\E \left\| \int_{t-\delta}^t \int_{\bR^d} p_{t-s}^{(x)}(y) D_{r,\cdot} u(s,y) W(d s, dy) \right\|^2 _{\tilde L^2(\R^d)} dr.
\end{align*}
Similar to $K^{(r)}$ given in \eqref{e:Kr}, we denote
\begin{align*}
    K_{\delta}^{(r)}((s,y),\cdot)={\mathbf 1}_{[t-\delta,t]}(s) p_{t-s}^{(x)}(y)  D_{r,\cdot}u(s,y).
\end{align*}
 Clearly $K_{\delta}^{(r)} \in \text{Dom}(\tilde{\boldsymbol{\delta}})$, and  we have  by \eqref{e:formula1},
$$\E\left[\|\tilde{\boldsymbol{\delta}}(K_{\delta}^{(r)})\|^2_{\tilde L^2(\bR^d)}\right] \leq \E\left[\|K_{\delta}^{(r)}\|_{\cH \otimes  \tilde L^2(\bR^d)}^2\right]+\E\left[\|DK_{\delta}^{(r)}\|^2_{\cH \otimes \cH\otimes  \tilde L^2(\bR^d)}\right].$$
Hence
\begin{align} \label{ineq-E[Id(elta)]}
    \E[I(\delta)] \leq I_1(\delta)+I_2(\delta),
\end{align}
where
$$I_1(\delta)=\int_{t-\delta}^t \E\left[\|K_{\delta}^{(r)}\|_{\cH \otimes \tilde L^2(\bR^d)}^2\right]  dr \quad \mbox{and} \quad I_2(\delta)=\int_{t-\delta}^t \E\left[\|DK_{\delta}^{(r)}\|_{\cH \otimes \cH \otimes \tilde L^2(\bR^d)}^2\right]  dr.$$ 
 By \eqref{estimate Du} in Step 2 and using \eqref{eq-phi}, we get
\begin{align*}
    & \bE\left[\|K_{\delta}^{(r)}\|_{|\cH|\otimes\tilde L^2(\bR^d)}^2\right] \\
    & \leq \int_{t-\delta} ^t\int_{t-\delta} ^t\int_{\bR^{2d}} p_{t-s}^{(x)}(y) p_{t-s'}^{(x)}(y') \Big( \bE\left[ \| D_{r,\cdot}u(s,y)\|_{\tilde L^2(\bR^d)}^2\right] \Big)^{1/2}  \\
    & \qquad \qquad \times \Big(\E \left[\| D_{r,\cdot}u(s',y')\|_{\tilde L^2(\bR^d)}^2 \right]\Big)^{1/2} |s-s'|^{-\beta_0} \gamma(y-y') dydy'dsds' \\
    & \leq C_t \phi(\delta).
\end{align*}
Similar,  using the estimate \eqref{estimate DDu} of $\bE \left[\| D^2_ru(t,x)\|_{\tilde L^2(\bR^d)}\right]$ proven in Step 2, we obtain
\begin{align*}
    & \bE \left[\|DK_{\delta}^{(r)}\|_{|\cH|\otimes \cH \otimes \tilde L^2(\bR^d)}^2\right] \\
\nonumber
    & \leq \int_{t-\delta} ^t \int_{t-\delta} ^t \int_{\bR^{2d}} p_{t-s}^{(x)}(y) p_{t-s'}^{(x)}(y') \Big(\E \left[\| D^2_{\cdot,(r,\cdot)} u(s,y)\|_{\cH \otimes \tilde L^2(\bR^d)}^2\right] \Big)^{1/2} \nonumber \\
    & \qquad \qquad \times  \Big(\E \left[\| D_{\cdot,(r,\cdot)}^2 u(s',y')\|_{\cH \otimes \tilde L^2(\bR^d)}^2 \right]\Big)^{1/2} |s-s'|^{-\beta_0} \gamma(y-y')dydy'dsds' \\
    & \leq C_t' \phi(\delta).
\end{align*}
%Here $D_r^2 u(t,x)$ denotes $D_{*,(r,\cdot)}^2$, where $*$ is the
%missing $(\theta,w)$-variable and $\cdot$ is the missing
%$z$-variable.
Thus, by \eqref{ineq-E[Id(elta)]} we have,
\begin{align} \label{estimate-I}
    \bE[I(\delta)]
    \le (C_t + C_t') \phi(\delta).
\end{align}
{\bf Step 7.}
Combining  \eqref{LB-D},  \eqref{e:J} and \eqref{estimate-I} and using  Markov's inequality, we get for any $n\geq 1$, 
\begin{align*}
& \bP\left(\Big\{\int_{0}^t  \|D_{r,\cdot} u(t,x)\|^2_{\tilde L^2(\R^d)}  dr <\frac1n\Big\}\cap \Omega_m\right) \leq
 \bP\left(I(\delta) + \frac{1}{2} J(\delta) > \frac{1}{4 m^2} \psi(\delta) - \frac1n \right)\\
 & \quad \leq \left( \frac{1}{4 m^2} \psi(\delta) - \frac1n\right)^{-1} \Big(\E[I(\delta)] + \frac{1}{2} \E[|J(\delta)|]\Big)\\
 & \quad \leq \left( \frac{1}{4 m^2} \psi(\delta) - \frac1n\right)^{-1} \left( (C_t + C_t') \phi(\delta)  + g_{t,x}(\delta) \psi(\delta) +o(\delta)\right).
\end{align*}
Taking $n\rightarrow \infty$, one gets by  using \eqref{phipsi} and \eqref{ineq-psi},
\begin{align*}
    & \bP\left( \Big\{\int_{0}^t \|D_{r,\cdot} u(t,x)\|^2_{\tilde L^2(\R^d)}  dr =0 \Big\}\cap \Omega_m\right) \\
&\leq  C m^2 \left(A_\delta + g_{t,x}(\delta)+o(1)\right).
\end{align*}
Next, we take $\delta\to 0$. Noting that $\lim_{\delta\to 0}(g_{t,x}(\delta)+A_{\delta})=0$ by \eqref{e:gtx0} and the fact that $A_\delta=2\int_0^\delta |s|^{-\beta_0} ds$,  we can obtain
$$\bP\left(\Big\{\int_{0}^t \|D_{r,\cdot} u(t,x)\|^2_{\tilde L^2(\R^d)}  dr  =0\Big\}\cap \Omega_m\right)=0.$$
This concludes the proof of \eqref{integral-positive} and hence the proof of Theorem \ref{Thm-density-Skorohod-1}.
\end{proof}

\begin{remark}
As pointed out in \cite{BQS2019EJP}, the main difficulty in proving the regularity of the law for the Skorohod solution comes from the time-space correlation of the underlying noise. Inspired by the method used in \cite{BQS2019EJP}, 
 we transform our problem into the case that the noise is white in time by considering the space $\tilde \cH$. We remark that the localization argument \eqref{eq:33} does not necessarily hold  if we work on $\|Du(t,x)\|^2_{\mathcal H}$ directly, noting that $\int_0^t \int_0^t |r-s|^{-\beta_0} f(r)f(s) drds$ is not increasing in $t$ in general (see, e.g., \cite[Example 3.5]{hp09} for a counterexample). 
\end{remark}

The following Lemma is used in the proof of Theorem \ref{Thm-density-Skorohod-1}.

\begin{lemma}\label{e:tilde}
If $\|f\|_{\cH} = 0$, we have $\|f\|_{\tilde \cH} = 0$ and $\|f(t,\cdot)\|_{\tilde L^2(\bR^d)} = 0$ for almost all $t \in \bR_+$. Moreover, if  $\mathrm{supp}(\mu)=\R^d$, we have $\|f(t,\cdot)\|_{L^2(\bR^d)} =0$ for almost all $t \in \bR_+$.
\end{lemma}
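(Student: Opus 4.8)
The plan is to pass to the spectral side in the time variable, which converts the vanishing of $\|f\|_{\cH}$ into the vanishing of an ordinary Fourier transform. Recall that for $\beta_0\in(0,1)$ the temporal kernel has the representation $|t-s|^{-\beta_0}=c_{\beta_0}\int_{\R}e^{\iota(t-s)\tau}\,|\tau|^{\beta_0-1}\,d\tau$ in the sense of tempered distributions, where $c_{\beta_0}>0$ and the density $|\tau|^{\beta_0-1}$ is locally integrable and strictly positive for almost every $\tau$. Substituting this into the definition \eqref{e:inner-prod} of the $\cH$-inner product — first for $f\in\mathcal D(\R_+\times\R^d)$, where every interchange of integration is legitimate, and then extending by density to all of $\cH$ — and using Fubini's theorem, I would obtain
\[ \|f\|_{\cH}^2=c_{\beta_0}\int_{\R^d}\int_{\R}\big|g_f(\tau,\xi)\big|^2\,|\tau|^{\beta_0-1}\,d\tau\,\mu(d\xi),\qquad g_f(\tau,\xi):=\int_{\R_+}e^{\iota t\tau}\,\hat f(t,\xi)\,dt, \]
so that $g_f(\tau,\xi)$ is the Fourier transform in time of $t\mapsto\hat f(t,\xi)\mathbf 1_{\R_+}(t)$. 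If $\|f\|_{\cH}=0$, this forces $g_f=0$ for $(|\tau|^{\beta_0-1}d\tau\otimes\mu)$-almost every $(\tau,\xi)$, hence — because the weight $|\tau|^{\beta_0-1}$ vanishes only on a Lebesgue-null set — for $(\mathrm{Leb}\otimes\mu)$-almost every $(\tau,\xi)$.

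By Fubini, for $\mu$-almost every $\xi$ the function $\tau\mapsto g_f(\tau,\xi)$ is zero a.e., so by Fourier inversion $\hat f(\cdot,\xi)\mathbf 1_{\R_+}$ is zero a.e.; that is, $\hat f(t,\xi)=0$ for $(\mathrm{Leb}\otimes\mu)$-a.e.\ $(t,\xi)\in\R_+\times\R^d$. One more application of Fubini shows that for almost every $t\in\R_+$ one has $\hat f(t,\xi)=0$ for $\mu$-a.e.\ $\xi$, i.e.\ $\|f(t,\cdot)\|_{\tilde L^2(\R^d)}=0$; integrating this in $t$ yields $\|f\|_{\tilde{\cH}}=0$. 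For the last assertion, assume $\mathrm{supp}(\mu)=\R^d$; since the functions $f$ to which the lemma is applied satisfy $f(t,\cdot)\in L^1(\R^d)$ for a.e.\ $t$, the map $\xi\mapsto\hat f(t,\xi)$ is continuous, and a continuous function that vanishes $\mu$-a.e.\ must vanish on $\mathrm{supp}(\mu)=\R^d$; hence $\hat f(t,\cdot)\equiv 0$ and $\|f(t,\cdot)\|_{L^2(\R^d)}=0$ for a.e.\ $t$.

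The only genuinely delicate step is establishing the spectral identity displayed above for a general element of $\cH$ rather than for a test function: one checks it on $\mathcal D(\R_+\times\R^d)$, verifies that both sides are continuous with respect to the $\cH$-seminorm, and passes to the completion; the measurability of $(\tau,\xi)\mapsto g_f(\tau,\xi)$ and the two invocations of Fubini are then routine. I would also emphasize that the argument uses $\beta_0\in(0,1)$ essentially: when $\beta_0=0$ the kernel $|t-s|^{-\beta_0}$ is constant in time and the implication fails, so the statement is understood in that nondegenerate range, which is the case relevant to the results invoking it. Everything else is bookkeeping with Plancherel's theorem.
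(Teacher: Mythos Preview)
Your approach is essentially identical to the paper's: both pass to the spectral side in the time variable, using that $|t-s|^{-\beta_0}$ has Fourier transform proportional to $|\tau|^{\beta_0-1}$, so that $\|f\|_{\cH}=0$ forces the space--time Fourier transform $\cF f(\cdot,\cdot)(\tau,\xi)$ to vanish for $(\mathrm{Leb}\otimes\mu)$-a.e.\ $(\tau,\xi)$; the paper then uses Plancherel in time (rather than your Fourier-inversion step, which needs a mild integrability check on $t\mapsto\hat f(t,\xi)$) to conclude $\|f\|_{\tilde\cH}=0$, and the $\tilde L^2(\R^d)$ statement follows by Fubini as you wrote.

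The only substantive difference is in the final assertion. You import the extra hypothesis $f(t,\cdot)\in L^1(\R^d)$ to get continuity of $\xi\mapsto\hat f(t,\xi)$ and then argue via density of $\mathrm{supp}(\mu)$, whereas the paper deduces $\int_{\R^d}|\cF f(s,\cdot)(\xi)|^2\,d\xi=0$ directly from the fact that $\cF f(s,\cdot)(\xi)=0$ for $\mu$-a.e.\ $\xi$ together with $\mathrm{supp}(\mu)=\R^d$. Your route is clean when the $L^1$ condition is available, but it is not part of the lemma's hypotheses, so as a proof of the lemma \emph{as stated} it is incomplete at this point; the paper's direct step, on the other hand, implicitly relies on the spectral density $\hat\gamma$ being positive Lebesgue-a.e.\ (which is what makes $\mu$-null and Lebesgue-null sets coincide), so in either treatment this last claim rests on a slightly stronger reading of ``$\mathrm{supp}(\mu)=\R^d$''. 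Your remark that the argument genuinely requires $\beta_0\in(0,1)$ is correct and worth keeping: when $\beta_0=0$ the temporal weight is constant and the implication $\|f\|_{\cH}=0\Rightarrow\|f\|_{\tilde\cH}=0$ fails.
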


\begin{proof}
Assume that $\|f\|_{\cH}=0$. We use the convention that $f(t,x) = 0$ for $t<0$. Applying Parseval–Plancherel identity 
 to \eqref{e:inner-prod}, we get
\begin{align*}
    \int_{\bR} \int_{\bR^d} \left| \cF f(\cdot,\cdot)(u,\xi) \right|^2 |u|^{\beta_0-1} \mu(d\xi) du = 0.
\end{align*}
Thus, we have $    \left| \cF f(\cdot,\cdot)(u,\xi) \right|^2 = 0$
almost everywhere on $\bR \times \mathrm{supp}(\mu)$ and hence
\begin{align} \label{eq-123}
    \|f\|_{\tilde {\cH}}^2
    = \int_{\bR} \int_{\R^d} \left| \cF f(s,\cdot)(\xi) \right|^2 \mu(d\xi)ds
    = \int_{\bR} \int_{\R^d} \left| \cF f(\cdot,\cdot)(u,\xi)\right|^2 \mu(d\xi)du
    = 0.
\end{align}
The identity \eqref{eq-123} yields
\begin{align} \label{eq-1234}
   \|f(s,\cdot)\|_{\tilde L^2(\bR^d)}^2 = \int_{\R^d} \left| \cF f(s,\cdot)(\xi) \right|^2 \mu(d\xi)
    = 0
\end{align}
for almost all $s \in \bR_+$. 
For those $s \in \bR_+$ such that \eqref{eq-1234} holds, we have $    \left| \cF f(s,\cdot)(\xi) \right|^2 = 0$
for almost all $\xi \in \mathrm{supp}(\mu)$. Hence, if $\mathrm{supp}(\mu) = \bR^d$, we have $  \|f(s,\cdot)\|_{L^2(\bR^d)}^2
    = \int_{\R^d} \left| \cF f(s,\cdot)(\xi) \right|^2 d\xi
    = 0.$
This means that $\|f(s,\cdot)\|_{L^2(\bR^d)}^2 = 0$ for almost all $s \in \bR_+$ when $\mathrm{supp}(\mu) = \bR^d$.
\end{proof}

\begin{remark}
The argument in the beginning of the proof of Theorem \ref{Thm-density-Skorohod-1} was initially used in~\cite{BQS2019EJP} (see the proof of Theorem 1.2 therein), where space $L^2(\bR^d)$ instead of $\tilde L^2(\bR^d)$ was used under the implicit additional condition $\mathrm{supp}(\mu)=\R^d$.
\end{remark}

% \section{precise long-term asymptotics of the solutions}
% we are going to follow the approaches in \cite{chsx2015,chss2018} to study the intermittency property of the solutions.  

% (maybe we need some homogeneous property of $x$ for our case (with time-correlated noise), which is not doable. but if the noise is white in time and we only consider the skorohod solution, then we should not need the homogeneity(see \cite{hln2017}).)

% we extend \cite[ theorem 4.1.6]{chen2010} and  then  \cite[proposition 3.1]{chsx2015} to $x$. then we just follow the idea of \cite{chsx2015} and \cite{chss2018} to ge the precise long-term asymptotics for moments of the solutions. 

{\bf Acknowledgement} We would like to thank Xia Chen and Yaozhong Hu for inspiring discussions.  J. Song is partially supported by National Natural Science Foundation of China (No. 12521001). W. Yuan is partially supported by National Natural Science Foundation of China (No. 12501183), Guangdong Basic and Applied Basic Research Foundation (No. 2026A1515030040), and a Grant of the Department of Science and Technology of Guangdong Province (No. 2024QN11X161). W. Yuan gratefully acknowledges the financial support of ERC Consolidator Grant 815703 ``STAMFORD: Statistical Methods for High Dimensional Diffusions'' when he was a postdoctoral researcher at University of Luxembourg.

%\section*{Statements and Declarations}

%\textbf{Competing interests.} The authors have no competing interests to declare.

\appendix

\section{Feynman--Kac formula}\label{sec:FK}
Consider the following partial differential equation 
\begin{equation}\label{e:pde}
   \begin{cases}
        \frac{\partial }{\partial t} u(t,x)=\mathcal L u(t,x) + f(t,x)u(t,x), &t\ge 0, x\in\R^d,\\
        u(0,x)=u_0(x), &x\in \R^d,
        \end{cases}
\end{equation}
where $\mathcal L$ is the infinitesimal generator of a Feller process $X$, $u_0(x)$ is a bounded measurable function, and $f(t,x)$ is a measurable function.  The Feynman--Kac formula for \eqref{e:pde} can be found in \cite[Theorem 3.47]{Liggett}. Nevertheless, we provide our version of Feynman--Kac formula which suits our purpose.  

Assume 
\begin{equation}\label{e:con-f}
\E_{X}\left[\exp\left(\left| \int_0^t f(t-s, X^x_s)ds\right|\right)\right]<\infty, \text{ for  all } x\in\R^d.    \end{equation}
Then, the following Feynman--Kac representation
\begin{equation}\label{e:FK'}
u(t,x)=\E_{X}\left[u_0(X^x_t)\exp\left( \int_0^t f(t-s, X^x_s)ds\right)\right]
\end{equation}
is a Duhamel's solution to \eqref{e:pde}, i.e.,
\begin{equation}\label{e:duhamel}
    u(t,x) = \int_\R p_t^{(x)}(y)u_0(y) dy + \int_0^t \int_{\R^d} p_{t-s}^{(x)}(y) u(s,y) f(s,y) dyds. 
\end{equation}
\begin{proof}
One can verify directly that $u(t,x)$ given by \eqref{e:FK'} satisfies \eqref{e:duhamel}.  For simplicity, we assume $u_0(x)\equiv1$. Plugging the expression  \eqref{e:FK'} to \eqref{e:duhamel}, we have that  the right-hand side of \eqref{e:duhamel} is
\begin{equation}\label{e:rhs}
\begin{aligned}
&1+ \int_0^t \E_{X} [u(s, X^x_{t-s}) f(s, X^x_{t-s})] ds\\
&=1+ \int_0^t \E_{X} [u(t-s, X^x_{s}) f(t-s, X^x_{s})] ds\\
&= 1+ \int_0^t \E_{X} \left[\tilde \E_{\tilde X} \left[\exp\left( \int_0^{t-s} f(t-s-r, \tilde X_r^{X_s^x}) dr\right) \right ] f(t-s, X^x_{s})\right] ds,
\end{aligned}
\end{equation}
where $\tilde X$ is an independent copy of $X$ and $\tilde \E$ means the expectation with respect to $\tilde X$. Applying Taylor's expansion to the function $e^x$ and then taking expectation, one can show that the resulting series of the right-hand side of \eqref{e:rhs} is absolute convergent under the condition \eqref{e:con-f} and coincides with the series expansion  of $u(t,x)=\E_{X}\left[\exp\left( \int_0^t f(t-s, X_s^x)ds\right)\right]$  on the left-hand side of \eqref{e:duhamel}.  
\end{proof}

\bibliographystyle{plain}
\bibliography{Reference-SHE}

\end{document}